\documentclass[letterpaper]{article} 
\usepackage{authblk}
\usepackage{arxiv}  
\usepackage{times}  
\usepackage{helvet}  
\usepackage{courier}  
\usepackage[hyphens]{url}  
\usepackage{graphicx} 
\usepackage[numbers,sort&compress]{natbib}
\usepackage[table]{xcolor}
\usepackage{color-edits}
\usepackage{float} 
\usepackage{xspace}
\usepackage{mathtools}
\usepackage{bm}
\usepackage{wrapfig}
\usepackage{booktabs}
\usepackage{tikz}
\usepackage[utf8]{inputenc} 
\usepackage[T1]{fontenc}    
\usepackage{booktabs}       
\usepackage{amsfonts}       
\usepackage{nicefrac}       
\usepackage{microtype}      
\usepackage{lipsum}         
\usepackage{subcaption}
\usepackage{amsmath}
\usepackage{algorithm}
\usepackage{algpseudocode}
\usepackage{pgfgantt}
\usepackage{appendix}
\usepackage{mathrsfs}
\usepackage{multirow}

\usepackage{adjustbox}
\usepackage{threeparttable}
\usepackage{hyperref}       
\usepackage{tabularx}
\usepackage{amsthm, comment}
\usepackage{pifont}
\usepackage{enumitem}
\makeatletter
\newsavebox{\@tabnotebox}

\makeatother

\newcommand{\name}{PDHCG-CQP\xspace}
\title{GPU-Accelerated Conic Quadratic Programming with Local Linear Convergence under Strict Complementarity}
\hypersetup{
  hidelinks,
  pdftitle={GPU-Accelerated Conic Quadratic Programming with Local Linear Convergence under Strict Complementarity},
  pdfauthor={Hongpei Li, Yicheng Huang, Huikang Liu, Dongdong Ge, and Yinyu Ye},
  pdfkeywords={conic quadratic programming, restarted averaged PDHG, smoothed duality gap, quadratic growth, strict complementarity, GPU, multi-GPU}
}
\newtheorem{theorem}{Theorem}[section] 
\newtheorem{lemma}[theorem]{Lemma} 
\newtheorem{proposition}[theorem]{Proposition}
\newtheorem{corollary}[theorem]{Corollary}

\theoremstyle{definition}
\newtheorem{assumption}[theorem]{Assumption}
\newtheorem{definition}[theorem]{Definition}

\usepackage{siunitx}
\theoremstyle{remark}
\author[1]{Hongpei Li}
\author[2]{Yicheng Huang}
\author[3]{Huikang Liu}
\author[3]{Dongdong Ge}
\author[3,4]{Yinyu Ye}
\affil[1]{Northwestern University}
\affil[2]{Shanghai University of Finance and Economics}
\affil[3]{Shanghai Jiao Tong University}
\affil[4]{Stanford University}

\AtBeginDocument{%
  \addtolength\abovedisplayskip{-0.05\baselineskip}%
  \addtolength\belowdisplayskip{-0.1\baselineskip}%
  \addtolength\abovedisplayshortskip{-0.05\baselineskip}%
  \addtolength\belowdisplayshortskip{-0.1\baselineskip}%
}

\usepackage{setspace}
\usepackage{titlesec}
\titlespacing*{\section}{0pt}{-0.05\baselineskip}{-0.05\baselineskip}
\titlespacing*{\subsection}{0pt}{-0.075\baselineskip}{-0.075\baselineskip}
\titlespacing*{\subsubsection}{0pt}{-0.025\baselineskip}{-0.05\baselineskip}
\newcommand{\papertablesetup}{\small\setstretch{1.0}\renewcommand{\arraystretch}{1.2}}
\newcommand{\AVERAGEDPDHG}{}
\begin{document}
\maketitle

\begin{abstract}
We present \textbf{\name}, a GPU-accelerated first-order solver for large-scale conic convex quadratic programming. \name supports affine constraints and Cartesian products of nonnegative, second-order, rotated second-order, exponential, and three-dimensional power cones. At its core is a restarted averaged primal-dual hybrid gradient (PDHG) method, whose primal update is computed inexactly by solving a conic quadratic proximal subproblem with projected gradient iterations. We establish local linear convergence of the restarted averaged scheme with both exact and inexact primal proximal evaluations under a uniform local quadratic-growth condition on the smoothed primal-dual gap. We further show that this condition holds under strict complementarity by exploiting a rotated second-order-cone lifting together with local primal and dual regularity conditions. Our C/CUDA implementation combines matrix-free linear algebra, batched cone projections, adaptive inner solves, reflected-Halpern acceleration, and fully device-resident KKT residual computations. It also supports multi-GPU execution through a two-dimensional partitioning of the problem data. Extensive experiments on standard and large-scale quadratic programming (QP), convex quadratically constrained quadratic programming (QCQP), second-order cone programming (SOCP), and quasilinear Fisher equilibrium benchmarks demonstrate that \name achieves state-of-the-art robustness among first-order solvers while scaling efficiently to 8 GPUs and instances with up to $4.4\times10^8$ stored primal coordinates. \name is open source and available at \url{https://github.com/Lhongpei/PDHCG}.
\end{abstract}

\section{Introduction}

Conic convex quadratic programming (CQP) is a fundamental optimization
problem class with applications in machine learning
\cite{tibshirani1996regression}, control \cite{mayne2000constrained},
signal processing \cite{lobo1998applications}, finance
\cite{markowitz1952portfolio}, and market design
\cite{eisenberg1959consensus}.  A quadratic objective represents
curvature, regularization, or risk, while conic constraints model geometric
structures such as norms, perspectives, exponential relations, and power
laws.

Formally, we consider the conic convex quadratic program
\begin{equation}
\label{eq:cqp}
\begin{aligned}
    \min_{x\in E}\quad
    & f(x):=\frac12\langle x,Qx\rangle+\langle c,x\rangle\\
    \mathrm{s.t.}\quad
    & Ax=b,\qquad x\in K,
\end{aligned}
\end{equation}
where $E$ and $Y$ are finite-dimensional Euclidean spaces,
$A:E\to Y$ is linear, $Q:E\to E$ is self-adjoint and positive semidefinite,
$c\in E$, $b\in Y$, and $K\subseteq E$ is a nonempty closed convex cone.  The
associated equality-conic saddle function is
\begin{equation}
\label{eq:saddle}
    \mathcal L(x,y)
    :=
    \frac12\langle x,Qx\rangle+\langle c,x\rangle
    +\langle y,b-Ax\rangle,
    \qquad x\in K,\quad y\in Y,
\end{equation}
and a saddle point of \eqref{eq:saddle} recovers a primal-dual solution of
\eqref{eq:cqp}.

Classical conic and quadratic-programming solvers are dominated by interior-point and active-set methods. Modern interior-point solvers such as Clarabel \cite{goulart2024clarabel}, CuClarabel \cite{chen2024cuclarabel}, and QOCO-GPU \cite{chari2026qocogpu} provide reliable high-accuracy solutions, with the latter two accelerating sparse factorizations on GPUs. Their scalability, however, is limited by repeated Newton--KKT solves, whose sparse factorizations can become expensive in both time and memory because of fill-in and are difficult to distribute efficiently across multiple accelerators. First-order splitting methods alleviate this bottleneck: SCS \cite{o2016conic,scs}, ABIP+ \cite{lin2021admm,deng2025enhanced}, and OSQP \cite{stellato2020osqp} reuse a single factorization or employ indirect linear solves, but the former can still be prohibitive at scale while the latter requires multiple matrix--vector products per iteration. Fully matrix-free restarted primal-dual hybrid gradient (PDHG) methods eliminate linear-system solves altogether and have demonstrated strong scalability for LP in PDLP \cite{applegate2021practical} and cuPDLP \cite{lu2023cupdlpc}, and for box-constrained convex QP in PDQP \cite{lu2023practical}, HPR-QP \cite{chen2025hpr}, and PDHCG \cite{huang2025restarted}.

For conic feasible sets, this matrix-free toolbox is still incomplete.
PDCS \cite{lin2025pdcs}, the closest PDHG-based conic solver, treats a
linear objective and handles quadratic objectives through an epigraph
reformulation, which requires a factorization $Q=B^*B$, roughly triples
the outer iteration count, and adds the projection of a rotated second-order-cone (RSOC) block of
dimension at least $\operatorname{rank}(Q)+2$ to every iteration;
Section~\ref{sec:rh-restart-theory} quantifies these effects.  Moreover,
the current PDCS implementation does not yet convert the matrix-free
design into a practical advantage: in the conic benchmarks of
Section~\ref{sec:experiments}, it is outperformed by a wide margin --- even
by factorization-based solvers on the large instances for which
first-order methods are intended.  No available solver combines a native
quadratic objective, general vector-level cones, and matrix-free GPU
execution.  We therefore seek a method with the following properties.
\begin{itemize}
    \item \textbf{Matrix-free computation.}
    Each outer iteration uses only applications of $A$, $A^*$, and $Q$,
    together with product-cone projections; no factorization is required.

    \item \textbf{Native conic quadratic modeling.}
    The quadratic objective is kept explicitly, and the principal
    vector-level cones are supported without an objective-epigraph
    reformulation.

    \item \textbf{Linear convergence with implementable proximal steps.}
    The convergence theory covers both the exact primal proximal map and
    practical inexact solutions of the conic quadratic subproblems.

    \item \textbf{Scalable implementation.}
    The computational kernels map naturally to a single GPU and admit a
    distributed extension when the problem exceeds the memory of one
    device.
\end{itemize}

This paper develops \name, a matrix-free solver for \eqref{eq:cqp} with
these four properties.  The algorithm is restarted averaged PDHG applied
directly to the saddle problem \eqref{eq:saddle}: each iteration performs a
primal-first PDHG update whose primal step is a strongly convex conic
quadratic proximal subproblem, and each epoch restarts from the Ces\`aro
average of its iterates.  The proximal subproblem generally has no closed
form; it is solved by direct weighted cone projections when a separable
oracle is available and by matrix-free projected-gradient iterations
otherwise, and the convergence analysis covers both exact and inexact
proximal evaluations, with the latter satisfying a stated error budget.

The analysis rests on a single local regularity property: uniform
quadratic growth of the smoothed duality gap near the solution set.  A
single PDHG iterate provides no usable gap estimate, but the one-step
energy inequalities telescope into an $O(1/T)$ ergodic bound at the epoch
average, and quadratic growth converts this bound into a geometric
contraction of the distance to the KKT set; a perturbation argument
extends the contraction to inexact proximal solves.  Quadratic growth, in
turn, is not automatic for nonpolyhedral cones.  We characterize it
exactly through a primal and a dual error bound and verify both under
strict complementarity, so that the ``global convergence plus
strict-complementarity-driven local linear convergence'' principle,
recently established for semidefinite programming (SDP)
\cite{jiang2026pdhgsdp,kang2025admmsdp},
extends to the general conic program with a quadratic objective.

The contributions of this paper can be summarized as follows.
\begin{itemize}
    \item \textbf{Quadratic growth for conic convex QP under strict
    complementarity.}
    We characterize uniform local quadratic growth of the smoothed
    primal-dual gap exactly through a primal and a dual error bound, and
    we verify both bounds --- through a rotated-SOC lifting, the error
    bound of \cite{ding2023strict}, slack regularity, and normal-cone
    calmness --- when $K$ is a finite product of nonnegative, second-order,
    and positive-semidefinite cones whose local KKT centers are strictly
    complementary.

    \item \textbf{Local linear convergence of restarted averaged PDHG with
    an inexact inner solver.}
    Under this quadratic growth, restarting from epoch averages contracts
    the distance to the KKT set geometrically.  A solver-independent
    relative proximal-error budget preserves the rate, and a fixed number
    of warm-started projected-gradient steps per subproblem satisfies the
    budget.

    \item \textbf{A state-of-the-art GPU solver.}
    Our matrix-free CUDA implementation combines structured quadratic
    products, batched cone projections, adaptive inner tolerances, and a
    multi-GPU extension, delivering state-of-the-art performance on the
    tested large-scale conic QP benchmarks.
\end{itemize}

The experiments in Section~\ref{sec:experiments} support these claims.  On
the Maros-M{\'e}sz{\'a}ros benchmark
\cite{maros1999repository}, \name solves 126 instances at tolerance
$10^{-6}$, the most among the tested solvers, with the best average
runtime; on the Mittelmann QP benchmark
\cite{mittelmann2021decision} it solves 17 instances at both $10^{-6}$ and
$10^{-8}$, more than any other tested solver.  On the public convex quadratically constrained
quadratic-programming (QCQP) benchmark it is the only first-order solver that solves every instance
through $10^{-6}$, and on the Mittelmann second-order cone-programming (SOCP) benchmark
\cite{mittelmann2024socp} it attains the best solved counts among the
tested first-order methods.  On large sparse Lasso QPs it is the fastest
solver on seven of the nine instances. On large-scale quasilinear Fisher
equilibrium instances, the distributed implementation achieves up to a
$7.97\times$ speedup on eight GPUs and is the only solver reported as reaching
optimality at $n=10^7$ buyers; the large Lasso instances similarly benefit
from the multi-GPU implementation of \name.

\subsection{Related literature}

\paragraph{Conic and quadratic-programming solvers.}
Interior-point and active-set methods remain the standard approaches for
obtaining high-accuracy solutions of conic and quadratic programs.  Mature
commercial solvers, including MOSEK \cite{mosek2025manual}, Gurobi
\cite{gurobi2024manual}, and COPT \cite{ge2022cardinal}, provide highly
optimized interior-point implementations for conic and convex quadratic
programs and serve as high-accuracy baselines in our experiments.
Clarabel \cite{goulart2024clarabel} provides an interior-point method for
quadratic objectives over general convex cones, while CuClarabel
\cite{chen2024cuclarabel}, QOCO-GPU \cite{chari2026qocogpu}, and NVIDIA's cuOpt \cite{nvidia2025cuopt} investigate
GPU acceleration of sparse interior-point linear algebra.  These approaches
are complementary to the matrix-free regime studied here.

\paragraph{First-order and splitting-based solvers.}
SCS \cite{o2016conic,scs} and ABIP+ \cite{lin2021admm,deng2025enhanced} solve homogeneous
self-dual formulations using ADMM, while OSQP \cite{stellato2020osqp}
specializes ADMM to convex QPs.  Their direct
implementations rely on a reusable factorization of a fixed linear system;
indirect variants replace the factorization by iterative linear solves but
typically require several matrix-vector products per outer iteration.  PDCS \cite{lin2025pdcs} instead provides a matrix-free, GPU-oriented PDHG
method for linear-objective conic programs and handles convex QPs through
conic reformulation.  Our method retains the quadratic objective in the
primal PDHG proximal step and therefore avoids the corresponding
objective-epigraph lifting.

\paragraph{Restarted primal-dual methods.}
Restarted PDHG underlies the PDLP family \cite{applegate2021practical}
for large-scale LP, and GPU implementations such as cuPDLP
\cite{lu2023cupdlp} and cuPDLP-C\cite{lu2023cupdlpc} demonstrate the scalability of this approach.  PDQP \cite{lu2023practical} extends related first-order ideas to convex
QP, while HPR-QP \cite{chen2025hpr} and the earlier PDHCG method
\cite{huang2025restarted} provide further restarted primal-dual QP
algorithms.  Restarted Halpern PDHG and its reflected variant \cite{lu2024restarted}
have also been analyzed for LP.  Concurrent work by
\cite{liu2026reflected} studies a broader reflected-Halpern framework under
fixed-point sharpness.  The present paper instead analyzes restarted
averaged PDHG for the conic saddle problem through the smoothed duality
gap, with explicit perturbation guarantees for inexact quadratic proximal
solves, and establishes the required quadratic growth from conic primal and
dual geometry.

\subsection{Notation}

The adjoint of $A$ is denoted by
$A^*$, the dual cone of $K$ by $K^*$, and the normal cone of $K$ at $x$ by
$N_K(x)$.  For a primal-dual pair $(x,y)$, define the dual slack $s(x,y):=Qx+c-A^*y.$ 
A point $(x^\star,y^\star)$ is a saddle point if and only if, with
$s^\star=s(x^\star,y^\star)$,
\begin{equation}
\label{eq:kkt-cqp}
    Ax^\star=b,\qquad
    x^\star\in K,\qquad
    s^\star\in K^*,\qquad
    \langle x^\star,s^\star\rangle=0.
\end{equation}
We denote the saddle-point set and its primal and dual projections by
\[
    Z^\star
    :=
    \{(x,y):(x,y)\text{ satisfies \eqref{eq:kkt-cqp}}\},
    \qquad
    X^\star:=\operatorname{proj}_E Z^\star,
    \qquad
    Y^\star:=\operatorname{proj}_Y Z^\star.
\]
Throughout the convergence analysis, we assume $Z^\star\neq\emptyset$ and
fix a reference KKT point $\bar z\in Z^\star$.  All unqualified norms are
Euclidean, and product spaces use the corresponding product norm. 
For a nonempty set $S$, let
\[
    \operatorname{dist}(u,S):=\inf_{v\in S}\|u-v\|,
    \qquad
    \Pi_S(u):=\operatorname*{argmin}_{v\in S}\|u-v\|
\]
whenever the projection is single-valued. For $r>0$ and $z\in E\times Y$, define
\[
    B_r(z)
    :=
    \{w\in E\times Y:\|w-z\|<r\},
    \qquad
    \overline B_r(z)
    :=
    \{w\in E\times Y:\|w-z\|\le r\}.
\]


\ifdefined\AVERAGEDPDHG
\section{PDHCG for Conic Quadratic Programming}
\label{sec:averaged-restart-theory}
\label{sec:rh-restart-theory}

In this section, we present PDHCG-CQP, which directly applies the restarted
averaged PDHG method to the conic saddle-point problem \eqref{eq:saddle}.

\begin{algorithm}[H]
\caption{\name: restarted averaged PDHG for the original conic QP}
\label{alg:pdhcq1}
\begin{algorithmic}[1]
\State \textbf{Input:} $z^{0,0}=(x^{0,0},y^{0,0})$, stepsizes $\tau,\sigma>0$, an epoch length $T$.
\For{$n=0,1,2,\ldots$}
    \State Initialize the running average $\bar z^{n,0}=z^{n,0}$.
    \For{$k=0,1,\ldots,T-1$}
        \State Compute exactly or approximately
        \[
            x^{n,k+1}
            \approx
            \arg\min_{x\in K}
            \left\{
                \frac12\langle x,Qx\rangle+\langle c,x\rangle
                +\frac{1}{2\tau} \left\|x-\left( x^{n,k}+\tau A^*y^{n,k}\right) \right\|^2
            \right\}.
        \]
        \State Set
        $x_r^{n,k+1}=2x^{n,k+1}-x^{n,k}$ and
        $y^{n,k+1}=y^{n,k}+\sigma(b-Ax_r^{n,k+1})$.
        \State Update the running average:
        \[
            \bar z^{n,k+1}
            =
            \frac{k}{k+1}\bar z^{n,k}
            +
            \frac{1}{k+1}z^{n,k+1}.
        \]
    \EndFor
    \State Restart from the epoch average:
    $z^{n+1,0}=\bar z^{n,T}$.
\EndFor
\end{algorithmic}
\end{algorithm}

Algorithm~\ref{alg:pdhcq1} has two nested loops. Within an epoch, the inner
loop performs primal-first PDHG updates and maintains the running Ces\`aro
average of its iterates; the outer loop restarts each epoch from the previous
epoch average. Since every exact or feasible inexact primal iterate lies in
$K$ and $K$ is convex, the averaged primal component remains conically
feasible. The displayed minimization is either exact, when it admits a closed
form, or inexact, computed by an arbitrary inner solver. The theory below
covers both cases.

\paragraph{Benefit of PDHCG.}
The predecessor of the present method, PDHCG \cite{huang2025restarted}, was proposed for large-scale
convex QP as a restarted primal-dual method that solves the strongly convex
primal proximal subproblem accurately by conjugate-gradient-type inner
iterations, rather than taking a single forward gradient step.  Treating the quadratic term through its
proximal subproblem moves the curvature of $Q$ from the outer loop into
cheap inner iterations and thereby reduces the number of outer PDHG
iterations substantially.  The same design principle underlies \name.

Retaining the quadratic objective natively is also preferable to
eliminating it.  If $Q=B^*B$, problem \eqref{eq:cqp} admits the rotated
second-order-cone (RSOC) epigraph reformulation
\[
    \min_{x,t}\ t+\langle c,x\rangle
    \quad\mathrm{s.t.}\quad
    Ax=b,\quad x\in K,\quad (t,1,Bx)\in\mathcal Q_r,
\]
where
$\mathcal Q_r:=\{(u,v,w):u,v\ge0,\ 2uv\ge\|w\|^2\}$, after which any conic
solver with a linear objective applies.  Three considerations argue against
this route.  First, unless a factor $B$ is supplied, the reformulation
requires a potentially expensive and fill-inducing factorization of $Q$.
Second, the reformulation degrades the iteration path itself: the outer
iteration count grows --- roughly threefold in
Table~\ref{tab:native-q-vs-rsoc} --- and every outer iteration must
additionally project onto an RSOC block of dimension at least
$\operatorname{rank}(Q)+2$, which also adds storage and communication
costs.  These per-iteration lifted projections are typically more expensive
than the product-set projections performed inside the projected-gradient
inner solves of the native formulation. Third, the native formulation exposes the curvature of
$Q$ directly through the proximal Hessian $Q+\tau^{-1}I$, which could
improve practical convergence when this curvature is informative, although
the benefit is problem-dependent.

The test problems in Table~\ref{tab:native-q-vs-rsoc} are drawn from the
QPLIB-QCQP benchmark collection maintained by Hans Mittelmann
\cite{mittelmann2021decision}.  On these instances, the native formulation
is approximately three times faster and uses about one third as many
iterations as the RSOC reformulation.
\begin{table}[htbp]
    \centering
    \papertablesetup
    \caption{Native quadratic objective versus the RSOC reformulation
    on the Mittelmann QCQP benchmark instances.}
    \label{tab:native-q-vs-rsoc}
    \medskip
    \begin{tabular}{ccccc}
        \toprule
        \multirow{2}{*}{Target accuracy}
        & \multicolumn{2}{c}{Native quadratic}
        & \multicolumn{2}{c}{RSOC reformulation}\\
        \cmidrule(lr){2-3}\cmidrule(lr){4-5}
        & Time & Iterations & Time & Iterations\\
        \midrule
        $10^{-4}$ & 4.67  & 25,811  & 13.28  & 76,836\\
        $10^{-6}$ & 37.83 & 310,074 & 118.04 & 868,239\\
        $10^{-8}$ & 82.24 & 687,301 & 252.46 & 1,911,390\\
        \bottomrule
    \end{tabular}
\end{table}

The rest of this section proves eventual local linear convergence of
PDHCG-CQP.  Its local convergence analysis first uses the exact PDHG operator to
establish the basic epoch contraction and then passes to an implementable
inexact PDHG operator, in which the primal proximal subproblem may be approximated by
any inner method satisfying the error conditions stated below.  The conic
lifting introduced later in Section~\ref{sec:conic-geometry} is used only to
verify a quadratic-growth property.

\subsection{Preliminaries}

For saddle-point problems, the primal-dual gap is a standard merit function:
it compares the Lagrangian at the current primal and dual variables against a
primal-dual comparison point.  Ergodic convergence estimates for PDHG are
naturally expressed through this two-point gap
\cite{chambolle2011first}.  For $z=(x,y)\in K\times Y$ and
$\widehat z=(\widehat x,\widehat y)\in K\times Y$, define
\begin{equation}
\label{eq:ordinary-duality-gap}
    \mathcal Q(z,\widehat z)
    :=
    \mathcal L(x,\widehat y)-\mathcal L(\widehat x,y).
\end{equation}
Maximizing this comparison function over $\widehat z$ gives the ordinary
duality gap.  Although it is a natural primal-dual progress measure, this
supremum can be infinite when the primal or dual domain is unbounded.
Following the smoothed-gap framework of \cite{fercoq2022quadratic}, we
instead penalize the distance from the comparison point to a prescribed
center.  The resulting quantity remains finite and can satisfy a quadratic
error bound, thereby linking the ergodic PDHG estimate to restart
contraction.  This framework has also been used in restarted first-order
methods for convex QP, including rAPDHG and the earlier PDHCG method
\cite{lu2023practical,huang2025restarted}.

\begin{definition}[smoothed duality gap]
\label{def:smoothed-gap}
For $\xi>0$, $z=(x,y)\in K\times Y$, and
$\dot z=(\dot x,\dot y)\in E\times Y$, define
\begin{equation}
\label{eq:smoothed-gap}
    G_\xi(z;\dot z)
    :=
    \sup_{\widehat x\in K,\ \widehat y\in Y}
    \left\{
        \mathcal Q(z,\widehat z)
        -\frac{\xi}{2}\|\widehat x-\dot x\|^2
        -\frac{\xi}{2}\|\widehat y-\dot y\|^2
    \right\}.
\end{equation}
\end{definition}

The quadratic penalty makes the supremum finite and is also the bridge
between an ergodic gap estimate and distance to the KKT set.

\begin{assumption}[uniform local quadratic growth]
\label{ass:smoothed-qg}
There are a KKT point $\bar z\in Z^\star$, constants $R>0$, $\xi>0$, and
$\alpha_\xi>0$ such that
\begin{equation}
\label{eq:gap-qg}
    G_\xi(z;z^\star)
    \ge
    \alpha_\xi\operatorname{dist}^2(z,Z^\star)
\end{equation}
for every
\[
    z\in(K\times Y)\cap B_R(\bar z),
    \qquad
    z^\star\in Z^\star\cap B_R(\bar z).
\]
\end{assumption}

The center $z^\star$ in \eqref{eq:gap-qg} is allowed to vary over the local
solution stratum, while the constant $\alpha_\xi$ and the effective
neighborhood remain fixed.  Section~\ref{sec:conic-geometry} verifies this
assumption from primal and dual geometric error bounds.  Thus
Assumption~\ref{ass:smoothed-qg} is the only problem-dependent local
regularity property used in the convergence proof below.

The proof has two stages.  Stage~I assumes that every primal proximal
subproblem is solved exactly.  A one-step energy inequality gives an
$O(1/T)$ gap bound for one Ces\`aro-averaged epoch, and local quadratic
growth turns this bound into a strict contraction.  Stage~II treats the
finite accuracy of a general inexact proximal oracle as a perturbation of the
exact epoch and gives method-independent conditions under which the same
local linear rate is retained.  Projected gradient is then presented only as
one concrete oracle satisfying those conditions.

\subsection{Exact averaged epochs}

We first analyze the exact scheme, in which every primal proximal subproblem
is solved exactly; it is the reference trajectory against which the inexact
method of Stage~II is compared.  Throughout this stage the stepsizes and the
epoch length are fixed.  Set
\[
    F(x)
    :=
    \frac12\langle x,Qx\rangle+\langle c,x\rangle+\delta_K(x),
\]
and fix $\tau,\sigma>0$ such that
\begin{equation}
\label{eq:theory-step-condition}
    \tau\sigma\|A\|^2<1.
\end{equation}
One primal-first PDHG step from $z^k=(x^k,y^k)$ is
\begin{subequations}
\label{eq:theory-pdhg-step}
\begin{align}
    x^{k+1}
    &=
    \operatorname{prox}_{\tau F}(x^k+\tau A^*y^k),
    \label{eq:theory-pdhg-primal}\\
    x_r^{k+1}
    &=
    2x^{k+1}-x^k,\\
    y^{k+1}
    &=
    y^k+\sigma(b-Ax_r^{k+1}),
    \label{eq:theory-pdhg-dual}
\end{align}
\end{subequations}
denoted $\mathcal T(z^k)=z^{k+1}$.  Because the quadratic term is treated
proximally, the stability condition \eqref{eq:theory-step-condition}
involves $A$ but not $\|Q\|$.  During restart epoch $n$, perform $T$ steps
$z^{n,k+1}=\mathcal T(z^{n,k})$ and restart from the Ces\`aro average
\begin{equation}
\label{eq:theory-epoch-average}
    \bar z^{n,T}
    :=
    \frac1T\sum_{k=0}^{T-1}z^{n,k+1},
    \qquad
    z^{n+1,0}:=\bar z^{n,T}.
\end{equation}
Averaging is what produces the required gap estimate: the one-step energy
inequalities telescope into an $O(1/T)$ ergodic bound at $\bar z^{n,T}$,
whose primal component moreover remains in $K$ by convexity.

Joint primal-dual progress is measured in the metric of the symmetric
preconditioner
\begin{equation}
\label{eq:theory-preconditioner}
    P
    :=
    \begin{bmatrix}
        \tau^{-1}I&A^*\\
        A&\sigma^{-1}I
    \end{bmatrix},
\end{equation}
which is positive definite under \eqref{eq:theory-step-condition}.  In this
metric the exact update is a preconditioned resolvent of the KKT operator
and hence Fej\'er monotone with respect to $Z^\star$
(Lemma~\ref{lem:one-step-energy}).  With
\begin{equation}
\label{eq:theory-metric-constants}
    \lambda_-:=\lambda_{\min}(P),
    \qquad
    \lambda_+:=\lambda_{\max}(P),
    \qquad
    \kappa_P:=\sqrt{\lambda_+/\lambda_-},
\end{equation}
write, for the exact restart sequence,
\[
    d_n:=\operatorname{dist}(z^{n,0},Z^\star),
    \qquad
    q_0:=\sqrt{\frac{\lambda_+}{\alpha_\xi T}},
    \qquad
    R_0:=\frac{1+\kappa_P}{1-e^{-1}}\,d_0.
\]

\begin{theorem}[local linear convergence of exact restarted PDHG]
\label{thm:exact-local-linear}
\label{thm:main-local-linear}
Suppose $d_0>0$ and Assumption~\ref{ass:smoothed-qg} holds on
$B_R(\bar z)$, with
\begin{equation}
\label{eq:main-theorem-ball}
    \overline B_{R_0}(z^{0,0})
    \subseteq
    B_R(\bar z).
\end{equation}
If the restart length satisfies
\begin{equation}
\label{eq:main-restart-length}
    T
    \ge
    \max\left\{
        \frac{2\lambda_+}{\xi},
        \frac{e^2\lambda_+}{\alpha_\xi}
    \right\},
\end{equation}
then the distance to the KKT set decreases geometrically:
\begin{equation}
\label{eq:main-linear-rate}
    \operatorname{dist}(z^{n,0},Z^\star)
    \le
    e^{-n}d_0,
    \qquad n\ge0.
\end{equation}
\end{theorem}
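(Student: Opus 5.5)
The proof is an induction over epochs built on three ingredients: the one-step energy inequality (Lemma~\ref{lem:one-step-energy}), which yields Fejér monotonicity in the $P$-metric; the ergodic gap bound it produces; and Assumption~\ref{ass:smoothed-qg}, which turns that bound into a contraction.

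\textbf{Step 1: ergodic bound for one epoch.} For an exact epoch started at $z^{n,0}$ and any comparison point $\widehat z\in K\times Y$, the energy inequality states
\[
2\mathcal Q(z^{n,k+1},\widehat z)\le \|z^{n,k}-\widehat z\|_P^2-\|z^{n,k+1}-\widehat z\|_P^2 .
\]
Summing over $k=0,\dots,T-1$ telescopes the right-hand side. Since $\mathcal Q(\cdot,\widehat z)$ is convex in its first argument, Jensen's inequality gives
\[
\mathcal Q(\bar z^{n,T},\widehat z)\le \frac{1}{2T}\|z^{n,0}-\widehat z\|_P^2\le \frac{\lambda_+}{2T}\|z^{n,0}-\widehat z\|^2 .
\]
Choose as center $z^\star:=\Pi_{Z^\star}(z^{n,0})$. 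Then $\|z^{n,0}-\widehat z\|^2\le 2\|z^{n,0}-z^\star\|^2+2\|\widehat z-z^\star\|^2$. Subtracting $\tfrac{\xi}{2}\|\widehat z-z^\star\|^2$ and using $T\ge 2\lambda_+/\xi$, so that $\lambda_+/T\le \xi/2$, the $\widehat z$-dependent terms cancel. Taking the supremum over $\widehat z$ gives
\[
G_\xi(\bar z^{n,T};z^\star)\le \frac{\lambda_+}{T}\,d_n^2 .
\]
The constants may need some adjustment here, but a bound of this form, possibly with a factor of $2$, is what the first restart condition is designed to deliver.

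\textbf{Step 2: contraction.} If both $\bar z^{n,T}$ and $z^\star$ lie in $B_R(\bar z)$, Assumption~\ref{ass:smoothed-qg} gives
\[
\alpha_\xi d_{n+1}^2\le \frac{\lambda_+}{T}\,d_n^2,\qquad\text{i.e.}\qquad d_{n+1}\le q_0 d_n .
\]
The condition $T\ge e^2\lambda_+/\alpha_\xi$ means exactly $q_0\le e^{-1}$, so $d_{n+1}\le e^{-1}d_n$, and induction yields \eqref{eq:main-linear-rate}.

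\textbf{Step 3: localization (the main obstacle).} We must show every iterate and every chosen center stays inside $\overline B_{R_0}(z^{0,0})$. Fejér monotonicity in the $P$-norm gives $\|z^{n,k}-z^\star_n\|\le \kappa_P d_n$ for all $k$ within epoch $n$. The average obeys the same bound by convexity, and the centers satisfy $\|z^\star_n-z^{n,0}\|=d_n$. Hence within epoch $n$ every point lies within $(1+\kappa_P)d_n$ of $z^{n,0}$.

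Under the inductive hypothesis $d_j\le e^{-j}d_0$ for $j\le n$, the restart points drift by a summable amount:
\[
\|z^{n,0}-z^{0,0}\|\le \sum_{j<n}\|z^{j+1,0}-z^{j,0}\|\le \sum_{j<n}\kappa_P d_j .
\]
Combining the drift with the per-epoch radius bounds the distance from $z^{0,0}$ to any point of epoch $n$ by
\[
(1+\kappa_P)\sum_{j\ge0}e^{-j}d_0=(1+\kappa_P)\frac{d_0}{1-e^{-1}}=R_0 .
\]
By \eqref{eq:main-theorem-ball}, all relevant points therefore lie in $B_R(\bar z)$, which closes the induction.

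The delicate part is arranging the bookkeeping so that the per-epoch radius and the accumulated drift combine into exactly the $R_0$ given. Interleaving the contraction and containment statements in a single induction handles this.
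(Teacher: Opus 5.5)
Your proposal is correct and follows the paper's proof essentially step for step. The telescoped energy inequality plus Jensen gives the ordinary gap bound; your Young-inequality split, used in place of the paper's completion of the square, yields exactly $G_\xi(\bar z^{n,T};z_n^\star)\le\frac{\lambda_+}{T}d_n^2$, with no extra factor of $2$, so your hedge on the constant is unnecessary; and quadratic growth then gives the $e^{-1}$ contraction inside an interleaved induction.

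The only slip is in the drift estimate. There $\|z^{j+1,0}-z^{j,0}\|\le(1+\kappa_P)d_j$, as your own per-epoch radius shows, rather than $\kappa_P d_j$. With this correction the total is still $(1+\kappa_P)\sum_{j\le n}e^{-j}d_0<R_0$, which is exactly the paper's bookkeeping.
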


Of the two lower bounds in \eqref{eq:main-restart-length}, the first permits
the conversion of the ordinary gap into the smoothed gap, and the second
gives $q_0\le e^{-1}$, so that each exact epoch contracts by at least
$e^{-1}$.  The complete proof is deferred to
Appendix~\ref{app:proof-exact-local-linear}.

The local initialization condition in
Theorem~\ref{thm:exact-local-linear} should be understood as an eventual
condition rather than as a requirement that a user provide a warm start.  For
the exact reference scheme, standard PDHG convergence theory ensures
convergence from an arbitrary initialization to some saddle point
$z^\infty\in Z^\star$ under the usual stepsize condition
\cite{chambolle2011first,chambolle2016ergodic}; finite Ces\`aro averaging
and restart preserve the saddle-point fixed set.  Once the iterates enter a
sufficiently small neighborhood of $z^\infty$, that iterate may be reindexed
as $z^{0,0}$ in Theorem~\ref{thm:exact-local-linear}.  Hence the substantive
local requirement is that the saddle point selected by the global dynamics
admit uniform local quadratic growth.  Under the sufficient conic regularity
conditions developed in Section~\ref{sec:conic-geometry}, strict
complementarity at the limiting KKT point is the central mechanism that
yields this property.  Accordingly, within the strict-complementarity regime
covered by Section~\ref{sec:conic-geometry}, the local initialization
hypothesis amounts, after a finite burn-in, to assuming that the 
limiting KKT point is strictly complementary.  

\paragraph{Strict complementarity.} This interpretation parallels
recent SDP results: PDHG is eventually R-linearly convergent when its limiting
KKT point satisfies strict complementarity \cite{jiang2026pdhgsdp}, and
ADMM is locally linearly convergent when its limiting primal-dual solution is
strictly complementary \cite{kang2025admmsdp}.  Thus, subject to the conic
regularity conditions in Section~\ref{sec:conic-geometry}, our result extends
the same ``global convergence followed by strict-complementarity-driven local
linear convergence'' principle from SDP to conic convex QP with a quadratic
objective.

\subsection{Inexact proximal solves}

We now separate the convergence argument from the choice of inner solver.
For an input $v\in E$, define
\[
    p(v):=\operatorname{prox}_{\tau F}(v)
\]
as the unique minimizer over $K$ of the strongly convex function
\begin{equation}
\label{eq:theory-inner-objective}
    H_v(u)
    :=
    \frac12\langle u,Qu\rangle+\langle c,u\rangle
    +\frac{1}{2\tau}\|u-v\|^2.
\end{equation}
Let $\widetilde z^k=(\widetilde x^k,\widetilde y^k)$ be an inexact orbit and
set
\begin{equation}
\label{eq:theory-shadow-prox}
    \widetilde v^k
    :=
    \widetilde x^k+\tau A^*\widetilde y^k,
    \qquad
    p_{\rm sh}^{k+1}
    :=
    p(\widetilde v^k).
\end{equation}
At step $k$, an arbitrary inner solver returns a point
$\widetilde x^{k+1}\in K$.  Its proximal error is
\begin{equation}
\label{eq:general-inexact-error}
    \delta_k
    :=
    \|\widetilde x^{k+1}-p_{\rm sh}^{k+1}\|.
\end{equation}
The extrapolated primal point and dual update are then
\[
    \widetilde x_r^{k+1}
    =
    2\widetilde x^{k+1}-\widetilde x^k,
    \qquad
    \widetilde y^{k+1}
    =
    \widetilde y^k+\sigma(b-A\widetilde x_r^{k+1}).
\]
For a relative-error parameter $\eta\ge0$, set
\[
  \varepsilon_n:=\sum_{k=0}^{T-1}\delta_{n,k},
    \qquad  q_\eta:=q_0+C_{\rm ep}\eta,
    \qquad
    \beta_\eta:=1+\kappa_P+C_{\rm ep}\eta,
    \qquad
    R_\eta:=\frac{\beta_\eta}{1-q_\eta}\,d_0,
\]
where the constant
$C_{\rm ep}$ is supplied in 
Lemma~\ref{lem:theory-epoch-perturbation} in
Appendix~\ref{app:proof-inexact-local-linear}.
\stepcounter{theorem}
\edef\epochperturbationnumber{\thetheorem}

\begin{theorem}[local linear convergence with inexact proximal steps]
\label{thm:inexact-local-linear}
Suppose Assumption~\ref{ass:smoothed-qg} holds on $B_R(\bar z)$, let
$T\ge2\lambda_+/\xi$, and suppose
\begin{equation}
\label{eq:general-relative-error}
    \varepsilon_n\le\eta d_n,
    \qquad
    q_\eta<1.
\end{equation}
\label{eq:rh-relative-epoch-error}
If the initial point $z^{0,0}$ satisfies
\begin{equation}
\label{eq:inexact-retention-ball}
    \overline B_{R_\eta}(z^{0,0})
    \subseteq
    B_R(\bar z),
\end{equation}
then the distance to the KKT set decreases geometrically:
\begin{equation}
\label{eq:general-relative-rate}
    d_n\le q_\eta^n d_0,
    \qquad n\ge0.
\end{equation}
\end{theorem}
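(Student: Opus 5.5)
The proof treats the inexact epoch as a perturbation of the exact epoch analysed in Theorem~\ref{thm:exact-local-linear}, and then runs an induction on the epoch index $n$. The induction carries two hypotheses: the contraction $d_m\le q_\eta^m d_0$ for $m\le n$, and the localization of every iterate of epochs $0,\dots,n-1$ inside $\overline B_{R_\eta}(z^{0,0})\subseteq B_R(\bar z)$. These are needed because Assumption~\ref{ass:smoothed-qg} only holds there.

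\textbf{Step 1: the exact single-epoch contraction.} I will use the epoch-level statement underlying Theorem~\ref{thm:exact-local-linear}. Let $\widehat z^{n,k}$ denote the exact orbit started at $\widetilde z^{n,0}=z^{n,0}$, and let $z^\star:=\Pi_{Z^\star}(z^{n,0})$ be the projection (the KKT set is closed and convex). The argument proceeds as follows.
\begin{itemize}
\item Telescoping the one-step energy inequality of Lemma~\ref{lem:one-step-energy} gives, for every comparison point $\widehat z\in K\times Y$,
\[
\mathcal Q(\bar{\widehat z},\widehat z)\le \tfrac{1}{2T}\|z^{n,0}-\widehat z\|_P^2 .
\]
\item Since $T\ge 2\lambda_+/\xi$, we have $\tfrac{1}{2T}\|\cdot\|_P^2\le \tfrac{\xi}{4}\|\cdot\|^2$. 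Writing $\|z^{n,0}-\widehat z\|^2\le 2\|z^{n,0}-z^\star\|^2+2\|\widehat z-z^\star\|^2$ lets the smoothing term absorb the second part, giving $G_\xi(\bar{\widehat z};z^\star)\le \tfrac{\lambda_+}{T}d_n^2$ up to the constant convention already used in the paper.
\item Quadratic growth, applicable because both points lie in $B_R(\bar z)$, then yields $\operatorname{dist}(\bar{\widehat z},Z^\star)\le q_0 d_n$.
\item Fejér monotonicity in the $P$-metric keeps every exact iterate within $\kappa_P d_n$ of $z^\star$, hence within $(1+\kappa_P)d_n$ of $z^{n,0}$.
\end{itemize}

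\textbf{Step 2: the perturbation lemma.} I will bound the distance between the inexact and exact orbits. Denote the exact PDHG operator by $\mathcal T$ and the inexact one by $\widetilde{\mathcal T}$. Write $\widetilde z^{k+1}=\mathcal T(\widetilde z^k)+e_k$, where $e_k$ comes from replacing $p_{\rm sh}^{k+1}$ by $\widetilde x^{k+1}$. Its primal part has norm $\delta_k$, and its dual part has norm at most $2\sigma\|A\|\delta_k$, since only $x^{k+1}$ enters the extrapolation. The operator $\mathcal T$ is nonexpansive in $\|\cdot\|_P$ (it is firmly nonexpansive as a preconditioned resolvent). A discrete Gronwall/summation argument therefore gives
\[
\|\widetilde z^{n,k}-\widehat z^{n,k}\|\le \kappa_P\sqrt{\lambda_+}\,c\sum_{j<k}\delta_{n,j}.
\]
Averaging, the averages differ by at most $C_{\rm ep}\varepsilon_n$. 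This is the content of Lemma~\ref{lem:theory-epoch-perturbation}, and it is where $C_{\rm ep}$ is defined.

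Combining with Step 1 and the budget $\varepsilon_n\le \eta d_n$:
\[
d_{n+1}\le \operatorname{dist}(\bar{\widehat z},Z^\star)+C_{\rm ep}\varepsilon_n\le q_\eta d_n .
\]
The same bound shows that all inexact iterates of epoch $n$ lie within $\beta_\eta d_n$ of $z^{n,0}$.

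\textbf{Step 3: localization, the main obstacle.} The difficulty is circular. Step 1 needs both $\widehat z^{n,k}$ and $z^\star=\Pi_{Z^\star}(z^{n,0})$ inside $B_R(\bar z)$, but that is only known once the induction has already controlled all earlier epochs.

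I would close the loop with a geometric-series bound. By the triangle inequality and the induction hypothesis,
\[
\|z^{n,0}-z^{0,0}\|\le \sum_{m<n}\beta_\eta d_m\le \beta_\eta d_0\sum_{m<n} q_\eta^m .
\]
Every point of epoch $n$ lies within a further $\beta_\eta d_n\le\beta_\eta q_\eta^n d_0$ of $z^{n,0}$. The comparison point $\Pi_{Z^\star}(z^{n,0})$ lies within $d_n\le\beta_\eta q_\eta^n d_0$ of $z^{n,0}$. The exact iterates are covered by the bound $(1+\kappa_P)d_n$ from Step 1, since $1+\kappa_P\le\beta_\eta$.

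Summing, everything lies within $\beta_\eta d_0/(1-q_\eta)=R_\eta$ of $z^{0,0}$. By \eqref{eq:inexact-retention-ball} this region is inside $B_R(\bar z)$, which verifies the hypotheses for epoch $n$ and completes the induction giving \eqref{eq:general-relative-rate}.

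I expect the bookkeeping in Step 2 to be the most delicate part: getting the exact dual-error propagation and the $P$-norm conversion right so that $C_{\rm ep}$ matches the statement. For this I would rely on the appendix lemma as given.
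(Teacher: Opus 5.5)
Your proposal is correct and follows the paper's proof essentially step for step. You compare each inexact epoch with the exact shadow average started from the same restart point. You obtain $\operatorname{dist}\le q_0 d_n$ from the one-epoch smoothed-gap bound plus quadratic growth at the projected center. Adding the perturbation lemma gives $d_{n+1}\le q_0d_n+C_{\rm ep}\varepsilon_n\le q_\eta d_n$. You then close the induction with the displacement bound $\|z^{n+1,0}-z^{n,0}\|\le\beta_\eta d_n$, which keeps the center and the exact average inside $\overline B_{R_\eta}(z^{0,0})$.

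One slip in Step 1: weakening $\frac{1}{2T}\|\cdot\|_P^2$ to $\frac{\xi}{4}\|\cdot\|^2$ before the Young split would lose the $1/T$ factor. Keep $a=\lambda_+/(2T)$ and use $T\ge2\lambda_+/\xi$ only to absorb the $\|\widehat z-z^\star\|^2$ term, as in the paper's one-epoch proposition.
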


The complete proof, together with the perturbation lemma on which it relies,
is deferred to Appendix~\ref{app:proof-inexact-local-linear}.

The relative-error condition \eqref{eq:general-relative-error} is not
directly implementable: $\varepsilon_n$ aggregates distances to unknown
exact proximal points, and $d_n$ is the distance to the unknown KKT set.
The next subsection replaces both by computable certificates and a
movement-based inner tolerance.

\subsubsection{Accuracy certificates}

The error $\delta_k$ involves the unknown exact point $p_{\rm sh}^{k+1}$,
but it is controlled by computable certificates.  Set
\[
    H_\tau:=Q+\tau^{-1}I,
    \qquad
    \mu_\tau:=\lambda_{\min}(H_\tau) \geq \tau^{-1},
    \qquad
    L_\tau:=\lambda_{\max}(H_\tau) = \|Q\| + \tau^{-1}.
\]
If the inner algorithm returns $\widetilde x\in K$ for the proximal input
$v$ together with a stationarity residual
\begin{equation}
\label{eq:general-stationarity-residual}
    r
    \in
    Q\widetilde x+c+\tau^{-1}(\widetilde x-v)+N_K(\widetilde x),
\end{equation}
then, because the operator
$u\mapsto Qu+c+\tau^{-1}(u-v)+N_K(u)$ is $\mu_\tau$-strongly monotone and
contains $0$ at $p(v)$,
\begin{equation}
\label{eq:general-residual-to-error}
    \|\widetilde x-p(v)\|
    \le
    \mu_\tau^{-1}\|r\|.
\end{equation}
Alternatively, an objective gap $H_v(\widetilde x)-H_v(p(v))\le\Delta$ and
$\mu_\tau$-strong convexity give
$\|\widetilde x-p(v)\|\le\sqrt{2\Delta/\mu_\tau}$.  Any feasible inner
method whose certificates obey a geometric or relative budget therefore
satisfies Theorem~\ref{thm:inexact-local-linear}, regardless of how the
inexact point is produced.

\subsubsection{An illustrative example: Projected Gradient}

Projected gradient (PG) is one convenient way to produce the required inexact
proximal point, but it is not required by the preceding theorem.  Choose
\begin{equation}
\label{eq:theory-pg-parameters}
    \eta_{\rm PG}
    :=
    \frac{2}{L_\tau+\mu_\tau},
    \qquad
    \rho_{\rm PG}
    :=
    \frac{L_\tau-\mu_\tau}{L_\tau+\mu_\tau}
    \in[0,1).
\end{equation}

\begin{proposition}[fixed projected-gradient work]
\label{prop:fixed-pg-work}
Suppose each proximal subproblem is warm-started at
$u^{n,k,0}=\widetilde x^{n,k}$ and solved by $J$ PG steps with stepsize
$\eta_{\rm PG}$.  There is a local constant $C_w>0$, independent of
$J\ge1$, such that
\begin{equation}
\label{eq:theory-fixed-j-error}
    \varepsilon_n
    \le
    T C_w\rho_{\rm PG}^Jd_n.
\end{equation}
Consequently, any fixed $J$ satisfying
$T C_w\rho_{\rm PG}^J<1$ gives local Q-linear convergence.
\end{proposition}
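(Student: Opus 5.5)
The argument has two ingredients: the standard contraction of projected gradient on the strongly convex inner objective $H_v$, and a local bound on the warm-start distance $\|\widetilde x^{n,k}-p_{\rm sh}^{n,k+1}\|$ in terms of $d_n$. Throughout, work inside the retention ball of Theorem~\ref{thm:inexact-local-linear}, where all inexact iterates are known to remain.

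\textbf{Step 1: PG contraction.} The function $H_v$ is $\mu_\tau$-strongly convex and $L_\tau$-smooth. Its minimizer over $K$ is $p(v)$, which is a fixed point of $u\mapsto\Pi_K(u-\eta_{\rm PG}\nabla H_v(u))$. Projection onto $K$ is nonexpansive, and the gradient step with stepsize $2/(L_\tau+\mu_\tau)$ is a $\rho_{\rm PG}$-contraction for quadratics with Hessian spectrum in $[\mu_\tau,L_\tau]$. Hence after $J$ steps
\[
  \delta_{n,k}\le\rho_{\rm PG}^J\,\|\widetilde x^{n,k}-p_{\rm sh}^{n,k+1}\|.
\]
Every PG iterate lies in $K$, so feasibility of $\widetilde x^{n,k+1}$ is automatic.

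\textbf{Step 2: warm-start distance.} Pick $z^\star=\Pi_{Z^\star}(z^{n,0})$. Since $x^\star=p(x^\star+\tau A^*y^\star)$ and $p$ is $1$-Lipschitz (it is the prox of a convex function),
\[
  \|\widetilde x^{n,k}-p(\widetilde v^{n,k})\|\le\|\widetilde x^{n,k}-x^\star\|+\|\widetilde v^{n,k}-v^\star\|\le(2+\tau\|A\|)\,\|\widetilde z^{n,k}-z^\star\|.
\]
It remains to bound $\|\widetilde z^{n,k}-z^\star\|\le C'd_n$ uniformly for $k<T$.

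Here I would use the $P$-norm Fejér monotonicity of the exact operator (Lemma~\ref{lem:one-step-energy}). The inexact step differs from the exact step $\mathcal T(\widetilde z^{n,k})$ only by the primal error $\delta_{n,k}$ together with its induced dual error $2\sigma A(\cdot)$. This gives
\[
  \|\widetilde z^{n,k+1}-z^\star\|_P\le\|\widetilde z^{n,k}-z^\star\|_P+c_1\delta_{n,k}.
\]
Combining with Step 1, $\delta_{n,k}\le\rho_{\rm PG}^J c_2\|\widetilde z^{n,k}-z^\star\|_P\le c_2\|\widetilde z^{n,k}-z^\star\|_P$, so
\[
  \|\widetilde z^{n,k+1}-z^\star\|_P\le(1+c_1c_2)\|\widetilde z^{n,k}-z^\star\|_P.
\]
Iterating at most $T$ times yields $\|\widetilde z^{n,k}-z^\star\|\le\kappa_P(1+c_1c_2)^T d_n$. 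Note that this bound uses $\rho_{\rm PG}^J\le1$ and is therefore independent of $J\ge1$.

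\textbf{Step 3: conclusion.} Summing over $k=0,\dots,T-1$ gives $\varepsilon_n\le TC_w\rho_{\rm PG}^Jd_n$ with
\[
  C_w=(2+\tau\|A\|)\,\kappa_P(1+c_1c_2)^T.
\]
Then set $\eta=TC_w\rho_{\rm PG}^J$ and invoke Theorem~\ref{thm:inexact-local-linear}. Making $\eta$ small (i.e.\ $J$ large enough) also forces $q_\eta<1$. Strictly, the rate requires $q_\eta=q_0+C_{\rm ep}\eta<1$; the stated condition $TC_w\rho^J<1$ would be interpreted with $C_w$ absorbing $C_{\rm ep}$ and $q_0\le e^{-1}$.

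\textbf{Main obstacle.} The delicate point is Step 2: obtaining a $J$-independent constant while also keeping all iterates in the ball where Assumption~\ref{ass:smoothed-qg} applies. The crude Gronwall-type factor $(1+c_1c_2)^T$ is acceptable because $T$ is fixed. Containment of the iterates in that ball follows from the retention-ball hypothesis \eqref{eq:inexact-retention-ball} together with the inductive bound $d_n\le q_\eta^n d_0$. To avoid circularity, I would run this as a joint induction on $n$.
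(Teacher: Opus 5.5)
Your proposal is correct and follows the paper's proof. Both arguments use the projected-gradient contraction by $\rho_{\rm PG}^J$, a $J$-independent per-epoch bound $\|\widetilde z^{n,k}-z_n^\star\|\le C_{\rm in}d_n$ obtained by induction over the $T$ steps, the same factor $(2+\tau\|A\|)$ from nonexpansiveness of the exact proximal map, and an application of Theorem~\ref{thm:inexact-local-linear}. There the real requirement is $q_0+C_{\rm ep}TC_w\rho_{\rm PG}^J<1$, exactly as you remark; the paper's proof also ends with this $q_J<1$ condition.

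The only difference is how the per-epoch bound is obtained. You use $P$-norm Fej\'er monotonicity of the exact step plus the perturbation $(\Delta p,-2\sigma A\Delta p)$. The paper instead uses Lipschitz stability of the $J$-step PG map in $(u,v)$, together with the fact that KKT points are fixed by the approximate step. As your argument shows, this bound is global, so the joint induction on $n$ is not actually needed for it.
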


The proof is deferred to Appendix~\ref{app:proof-fixed-pg-work}. Proposition \ref{prop:fixed-pg-work} suggests that we should choose $J$ in the order of 
$$
\log_{\rho_{\rm PG}} T^{-1} = \frac{\log T}{\log \rho_{\rm PG}^{-1}} \leq \frac{\log T}{\log \left(1 + \frac{2}{\tau \|Q\|} \right)}.
$$
Thus, a logarithmic number of the restart length $T$ is sufficient to reduce the proximal error to the accuracy required by the convergence analysis. In particular, the inner work grows only mildly with $T$, while its constant depends on the contraction factor $\rho_{\rm PG}$, or equivalently on the conditioning of the quadratic proximal subproblem through $\tau|Q|$.

\else
\section{PDHCG for Conic Quadratic Programming}
\label{sec:rh-restart-theory}

In this section, we present PDHCG-CQP, which directly applies a restarted
reflected-Halpern PDHG method to the conic saddle-point problem
\eqref{eq:saddle}.  The exact-operator analysis specializes the reflected-Halpern
sharpness framework of \cite{lu2024restarted,liu2026reflected} to conic
quadratic programming, while the inexact analysis accounts explicitly for
errors in the quadratic proximal step.

\begin{algorithm}[H]
\caption{\name: restarted reflected-Halpern PDHG for the original conic QP}
\label{alg:pdhcq1}
\begin{algorithmic}[1]
\State Choose $z^{0,0}$, $\rho\in[1/2,1]$, stepsize $\{\tau_n, \sigma_n\},$ a restart-length policy $\{N_n\}$.
\For{$n=0,1,2,\ldots$}
    \State Set the initial Halpern state and the anchor $h^{n,0}=a^n=z^{n,0}$.
    \For{$k=0,1,\ldots,N_n-1$}
        \State Compute exactly or approximately
        \[
             x^{n,k+1}
            \approx
            \arg\min_{x\in K}
            \left\{
                \frac12\langle x,Qx\rangle+\langle c,x\rangle
                +\frac{1}{2\tau_n} \left\|x- \left(h_x^{n,k}+\tau_nA^*h_y^{n,k} \right) \right\|^2
            \right\}.
        \]
        \State Form the stored PDHG candidate
        \[
        \begin{aligned}
            y^{n,k+1}
            &=
            h_y^{n,k}
            +\sigma_n\!\left[
                b-A\bigl(2x^{n,k+1}-h_x^{n,k}\bigr)
            \right].
        \end{aligned}
        \]
            \State Set $
            z^{n,k+1}
            =
            (x^{n,k+1},y^{n,k+1})$ and 
            \[
                h^{n,k+1}
                =
                \frac{k+1}{k+2}
                \bigl(2\rho z^{n,k+1}+(1-2\rho)h^{n,k}\bigr)
                +
                \frac{1}{k+2}a^n .
            \]
    \EndFor
    \State Restart from the final stored candidate:
    $z^{n+1,0}=z^{n,N_n}$.
\EndFor
\end{algorithmic}
\end{algorithm}

Algorithm~\ref{alg:pdhcq1} has two nested loops.  Within an epoch, each
evaluation of the primal-first PDHG operator produces a stored candidate, and the
intermediate state is updated by an anchored reflected-Halpern recursion.  The
last operator evaluation is used only to form the next restart point.  This
distinction matters because a reflected-Halpern state need not have a primal
component in $K$, whereas every stored candidate returned by an exact or
feasible inexact proximal solve does. The displayed minimization is either exact, when it admits a closed form, or inexact, computed by an arbitrary inner solver. The theory below covers both cases.

The $O(1/k)$ sub-linear convergence of restarted reflected-PDHG has been shown in \cite{liu2026reflected}. The rest of this section proves eventual local linear convergence of
PDHCG-CQP.  The analysis first uses the exact PDHG operator and the
reflected-Halpern residual estimate to establish an epoch contraction.  It
then treats a general inexact proximal oracle as an operator perturbation and
gives method-independent conditions under which the same local linear rate is
retained.  Projected gradient is presented only as one concrete oracle
satisfying those conditions.  The smoothed-gap geometry developed separately
in Section~\ref{sec:conic-geometry} is a complementary quadratic-growth
result; the reflected-Halpern proof below uses fixed-point residual sharpness.

\subsection{Stage I: exact reflected-Halpern epochs}

We first isolate the outer restart mechanism by assuming that every primal
proximal subproblem is solved exactly.  This exact scheme provides the
reference trajectory against which the inexact method will be compared in
Stage~II.  The objective of this stage is to convert the $O(1/k)$ fixed-point
residual estimate of a reflected-Halpern orbit into a contraction of the
distance to the KKT set.

Set
\[
    F(x)
    :=
    \frac12\langle x,Qx\rangle+\langle c,x\rangle+\delta_K(x),
\]
so that $F$ incorporates both the quadratic objective and the conic
constraint.  During epoch $n$, fix $\tau_n,\sigma_n>0$.  For $z=(x,y)$,
define
\begin{subequations}
\label{eq:theory-pdhg-map}
\begin{align}
    p_n(z)
    &:=
    \operatorname{prox}_{\tau_nF}(x+\tau_nA^*y),
    \label{eq:theory-primal-step}\\
    q_n(z)
    &:=
    y+\sigma_n\left[b-A\bigl(2p_n(z)-x\bigr)\right],
    \label{eq:theory-dual-step}\\
    \mathcal T_n(z)
    &:=
    (p_n(z),q_n(z)).
    \label{eq:theory-map-definition}
\end{align}
\end{subequations}
The primal update solves the conic quadratic proximal problem, the reflected
point $2p_n(z)-x$ supplies the standard PDHG extrapolation, and the dual update
corrects the multiplier using the corresponding equality residual.  Because
the quadratic term is treated proximally, the stepsize condition below
involves $A$ but not $\|Q\|$.

Define the KKT mapping
\begin{equation}
\label{eq:kkt-map}
    \mathcal F_{\rm KKT}(x,y)
    :=
    \begin{bmatrix}
        Qx+c-A^*y+N_K(x)\\
        Ax-b
    \end{bmatrix}
\end{equation}
and the epoch metric
\begin{equation}
\label{eq:pdhg-metric}
    \mathcal M_n
    :=
    \begin{bmatrix}
        \tau_n^{-1}I&A^*\\
        A&\sigma_n^{-1}I
    \end{bmatrix}.
\end{equation}
The positive off-diagonal blocks are consistent with the saddle coupling
$\langle y,b-Ax\rangle$.

\begin{assumption}[uniform epoch metrics]
\label{ass:uniform-metrics}
There are positive lower and upper bounds on $\tau_n$ and $\sigma_n$, and
there is $\delta_{\mathcal M}>0$ such that
\[
    \tau_n\sigma_n\|A\|^2
    \le
    1-\delta_{\mathcal M}
\]
for every local epoch $n$.  Consequently $\mathcal M_n\succ0$, and there is
$\chi\ge1$ such that
\begin{equation}
\label{eq:metric-equivalence}
    \|u\|_{\mathcal M_{n+1}}
    \le
    \chi\|u\|_{\mathcal M_n}
    \qquad\text{for all }u,n.
\end{equation}
The same bounds provide constants $0<m_-\le m_+<\infty$ satisfying
\begin{equation}
\label{eq:uniform-metric-bounds}
    m_-\|u\|^2
    \le
    \|u\|_{\mathcal M_n}^2
    \le
    m_+\|u\|^2
    \qquad\text{for all }u,n.
\end{equation}
When the stepsizes do not change between epochs, one may take $\chi=1$.
\end{assumption}

The proximal optimality condition and the dual update give the resolvent
identity
\begin{equation}
\label{eq:pdhg-residual-kkt}
    \mathcal M_n\bigl(z-\mathcal T_nz\bigr)
    \in
    \mathcal F_{\rm KKT}(\mathcal T_nz).
\end{equation}
Thus $\mathcal T_n=(I+\mathcal M_n^{-1}\mathcal F_{\rm KKT})^{-1}$ is firmly
nonexpansive in the $\mathcal M_n$-metric and
$\operatorname{Fix}(\mathcal T_n)=Z^\star$.  This identity is also the bridge
between a fixed-point residual and a KKT residual.

Define
\begin{equation}
\label{eq:fp-residual}
    R_n(z)
    :=
    \|z-\mathcal T_nz\|_{\mathcal M_n}.
\end{equation}

\begin{assumption}[uniform local fixed-point sharpness]
\label{ass:fp-sharpness}
There are a neighborhood $U$ of a KKT point $\bar z$, a constant
$\alpha_{\rm fp}>0$, and an epoch index $n_0$ such that
\begin{equation}
\label{eq:fp-sharpness}
    \alpha_{\rm fp}
    \operatorname{dist}_{\mathcal M_n}(z,Z^\star)
    \le
    R_n(z)
\end{equation}
for every $z\in U$ and every $n\ge n_0$.
\end{assumption}

The constant and the neighborhood in Assumption~\ref{ass:fp-sharpness} are
uniform over the local epoch metrics.  A standard sufficient condition is
uniform local metric subregularity of the KKT mapping: if, for every nearby
$w$ and every local epoch,
\begin{equation}
\label{eq:kkt-msr}
    \operatorname{dist}_{\mathcal M_n}(w,Z^\star)
    \le
    \kappa_{\mathcal F}
    \operatorname{dist}_{\mathcal M_n^{-1}}
        (0,\mathcal F_{\rm KKT}(w)),
\end{equation}
then \eqref{eq:pdhg-residual-kkt} and the triangle inequality give
\[
    \operatorname{dist}_{\mathcal M_n}(z,Z^\star)
    \le
    (1+\kappa_{\mathcal F})R_n(z).
\]
Hence one may take
$\alpha_{\rm fp}=(1+\kappa_{\mathcal F})^{-1}$.  For polyhedral $K$, such
residual error bounds follow on bounded sets from standard
piecewise-polyhedral and Hoffman arguments
\cite{robinson1981continuity}.  For nonpolyhedral cones, they require the
corresponding local conic regularity conditions; implementation support for a
cone alone is not claimed to imply \eqref{eq:kkt-msr}.

For $\rho\in[1/2,1]$, define the relaxed reflection
\begin{equation}
\label{eq:relaxed-reflection-map}
    \mathcal S_{\rho,n}
    :=
    (1-\rho)I+\rho(2\mathcal T_n-I)
    =
    2\rho\mathcal T_n+(1-2\rho)I.
\end{equation}
It is nonexpansive in the $\mathcal M_n$-metric and has fixed-point set
$Z^\star$.  Starting from the anchor $a^n$, set $h^{n,0}=a^n$ and
\begin{equation}
\label{eq:theory-rh-step}
    h^{n,k+1}
    =
    \frac{k+1}{k+2}\mathcal S_{\rho,n}(h^{n,k})
    +
    \frac{1}{k+2}a^n.
\end{equation}
The standard Halpern residual estimate for a nonexpansive map
\cite{lu2024restarted} and the identity
$z-\mathcal S_{\rho,n}z=2\rho(z-\mathcal T_nz)$ yield
\begin{equation}
\label{eq:rh-residual-decay}
    R_n(h^{n,k})
    \le
    \frac{1}{\rho(k+1)}
    \|a^n-z^\star\|_{\mathcal M_n},
    \qquad
    z^\star\in Z^\star,\quad k\ge1.
\end{equation}
The choice $\rho=1/2$ gives standard Halpern iteration on $\mathcal T_n$,
whereas $\rho=1$ gives Halpern iteration on the reflected map
$2\mathcal T_n-I$ and improves the residual constant by a factor of two.

In epoch $n$, the first $N_n-1$ evaluations generate the states in
\eqref{eq:theory-rh-step}; the final evaluation is stored as the restart
candidate,
\begin{equation}
\label{eq:exact-restart-map}
    a^{n+1}
    :=
    \mathcal T_n(h^{n,N_n-1}).
\end{equation}
Let $N_{\min}\ge2$ be a uniform lower bound with
$N_n\ge N_{\min}$, and write
\[
    d_n
    :=
    \operatorname{dist}_{\mathcal M_n}(a^n,Z^\star),
    \qquad
    q
    :=
    \frac{\chi}{\rho\alpha_{\rm fp}N_{\min}},
    \qquad
    R_0
    :=
    \frac{2d_0}{\sqrt{m_-}(1-q)}.
\]

\begin{theorem}[local linear convergence of exact reflected-Halpern PDHG]
\label{thm:exact-local-linear}
\label{thm:main-local-linear}
Suppose Assumptions~\ref{ass:uniform-metrics} and
\ref{ass:fp-sharpness} hold, relabel the first local epoch $n_0$ as epoch zero,
and let $N_n\ge N_{\min}\ge2$.  If
\begin{equation}
\label{eq:restart-length-condition}
    q
    =
    \frac{\chi}{\rho\alpha_{\rm fp}N_{\min}}
    <1
\end{equation}
and $\overline B_{R_0}(a^0)\subseteq U$, then every exact
reflected-Halpern trajectory remains in $U$ and
\begin{equation}
\label{eq:exact-contraction}
    d_{n+1}
    \le
    \frac{\chi}{\rho\alpha_{\rm fp}N_n}d_n
    \le
    qd_n.
\end{equation}
Consequently $d_n\le q^nd_0$, and the anchors converge R-linearly to a point
of $Z^\star$.
\end{theorem}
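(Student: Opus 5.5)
The plan is to prove the one-epoch contraction \eqref{eq:exact-contraction} by induction on $n$, and to carry along the invariant that every state and candidate produced so far stays in $\overline B_{R_0}(a^0)\subseteq U$. Fix an epoch $n$ with anchor $a^n\in U$, and let $z^\star_n:=\Pi^{\mathcal M_n}_{Z^\star}(a^n)$ be the $\mathcal M_n$-projection of the anchor, so that $\|a^n-z^\star_n\|_{\mathcal M_n}=d_n$. The link between the residual estimate and the next distance comes from firm nonexpansiveness of $\mathcal T_n$ in the $\mathcal M_n$-metric, with $Z^\star=\operatorname{Fix}\mathcal T_n$. For any $h$,
\[
\operatorname{dist}_{\mathcal M_n}(\mathcal T_nh,Z^\star)\le \alpha_{\rm fp}^{-1}R_n(\mathcal T_nh)\le \alpha_{\rm fp}^{-1}R_n(h),
\]
using sharpness \eqref{eq:fp-sharpness} at $\mathcal T_nh$ and monotonicity of the residual $\|h-\mathcal T_nh\|_{\mathcal M_n}$ along $\mathcal T_n$. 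The latter follows from nonexpansiveness: $\|\mathcal T_nh-\mathcal T_n^2h\|\le\|h-\mathcal T_nh\|$.

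Apply this with $h=h^{n,N_n-1}$ and invoke \eqref{eq:rh-residual-decay} with $k=N_n-1\ge1$. This gives
\[
\operatorname{dist}_{\mathcal M_n}(a^{n+1},Z^\star)\le \frac{d_n}{\rho\alpha_{\rm fp}N_n}.
\]
To pass to the metric $\mathcal M_{n+1}$, use \eqref{eq:metric-equivalence}: the $\mathcal M_n$-nearest point $w\in Z^\star$ to $a^{n+1}$ satisfies $\|a^{n+1}-w\|_{\mathcal M_{n+1}}\le\chi\|a^{n+1}-w\|_{\mathcal M_n}$. Hence $d_{n+1}\le \chi d_n/(\rho\alpha_{\rm fp}N_n)\le qd_n$.

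It remains to check that every point where sharpness is applied lies in $U$. Sharpness is used only at $\mathcal T_nh^{n,N_n-1}=a^{n+1}$, but the Halpern states should also be kept in $U$ for the statement about trajectories. Since $\mathcal S_{\rho,n}$ is nonexpansive with fixed point $z^\star_n$, an induction on \eqref{eq:theory-rh-step} gives $\|h^{n,k}-z^\star_n\|_{\mathcal M_n}\le d_n$ for all $k$. Then $\|\mathcal T_nh^{n,k}-z^\star_n\|_{\mathcal M_n}\le d_n$ as well. Consequently every point $w$ generated in epoch $n$ satisfies
\[
\|w-a^n\|\le m_-^{-1/2}\bigl(\|w-z^\star_n\|_{\mathcal M_n}+\|a^n-z^\star_n\|_{\mathcal M_n}\bigr)\le 2d_n/\sqrt{m_-}.
\]
Chaining across epochs with $d_j\le q^jd_0$ gives, for all points of epoch $n$,
\[
\|w-a^0\|\le \sum_{j\le n}\|a^{j+1}-a^j\|+\tfrac{2d_n}{\sqrt{m_-}}\le \frac{2d_0}{\sqrt{m_-}}\sum_{j\ge0}q^j=R_0.
\]
Here I use that $a^{j+1}$ is itself a point of epoch $j$, and fold the tail carefully: the partial geometric sum is at most the full one. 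This closes the induction and yields the invariant.

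Finally, for R-linear convergence of the anchors, the displacement bound $\|a^{n+1}-a^n\|\le 2q^nd_0/\sqrt{m_-}$ makes $(a^n)$ Cauchy with a geometric tail. Its limit lies in $Z^\star$ because $\operatorname{dist}(a^n,Z^\star)\le m_-^{-1/2}d_n\to0$ and $Z^\star$ is closed.

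I expect the main obstacle to be the bookkeeping in the localization step. The subtle point is that sharpness must be applied at the candidate $\mathcal T_nh$, not at the Halpern state, whose primal part need not lie in $K$. The radius $R_0$ must also absorb both the anchor drift and the within-epoch excursion of size $2d_n/\sqrt{m_-}$. The contraction inequality itself is short once residual monotonicity is noted.
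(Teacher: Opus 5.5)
Your proposal is correct and follows essentially the paper's approach: a retention-then-contraction induction. The paper's appendix proof, written for the averaged variant, likewise bounds the drift of all epoch points through projections onto $Z^\star$ before applying the local regularity at the new restart point. Your Halpern version of it (residual decay at $k=N_n-1$, sharpness at the candidate $a^{n+1}=\mathcal T_n h^{n,N_n-1}$ with $R_n(\mathcal T_n h)\le R_n(h)$, then the $\chi$ metric change) reproduces exactly the paper's factor $\chi/(\rho\alpha_{\rm fp}N_n)$ and radius $R_0=2d_0/(\sqrt{m_-}(1-q))$.

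One small fix: in the drift chain, sum the anchor increments over $j<n$ rather than $j\le n$. Otherwise the last increment is counted twice alongside $2d_n/\sqrt{m_-}$, and the bound by $R_0$ can fail when $q<1/2$.
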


\paragraph{Strict complementarity.}
The local initialization condition in
Theorem~\ref{thm:exact-local-linear} should be understood as an eventual
condition rather than as a requirement that a user provide a warm start.
Global convergence results for Halpern and reflected-Halpern fixed-point
iterations \cite{lu2024restarted,liu2026reflected} imply that, under their
standard global hypotheses, an arbitrary initialization converges to a
fixed point of the PDHG operator.  Once the global dynamics select a
limit $z^\infty\in Z^\star$ and enter a sufficiently small neighborhood of
that point, the corresponding epoch can be reindexed as $a^0$ in
Theorem~\ref{thm:exact-local-linear}.  Thus the substantive local requirement
for the present theorem is fixed-point sharpness at the selected limit.
Strict complementarity, together with the relevant conic nondegeneracy and
second-order regularity, is a standard route to the KKT metric subregularity
in \eqref{eq:kkt-msr}.  This interpretation parallels recent SDP results:
PDHG \cite{jiang2026pdhgsdp} is eventually R-linearly convergent when its
limiting KKT point satisfies strict complementarity, and ADMM
\cite{kang2025admmsdp} is locally linearly convergent when its limiting
primal-dual solution is strictly complementary.  Accordingly, under the additional regularity needed
for \eqref{eq:kkt-msr}, the present result extends this
strict-complementarity-driven local linear convergence principle from SDP to
general conic convex QP.  The smoothed-gap quadratic growth established in
Section~\ref{sec:conic-geometry} is closely related geometric information, but
does not by itself replace fixed-point sharpness.

For fixed metrics, $\chi=1$.  A sufficient epoch length for contraction by
$1/e$ is
\[
    N_{\min}
    \ge
    \frac{e\chi}{\rho\alpha_{\rm fp}}.
\]
Thus the sufficient length at $\rho=1$ is half that at $\rho=1/2$.  This is a
factor-of-two improvement in the residual-complexity bound, not a claim that
every realized runtime is exactly halved.  The complete proof of
Theorem~\ref{thm:exact-local-linear} is deferred to
Appendix~\ref{app:proof-exact-local-linear}.

\subsection{Stage II: general inexact proximal solves}

We now separate the convergence argument from the choice of inner solver.
Let $\widetilde{\mathcal T}_{n,k}$ denote a PDHG step in which the primal
proximal problem is solved approximately, and write
\begin{equation}
\label{eq:operator-error}
    \widetilde{\mathcal T}_{n,k}(z)
    =
    \mathcal T_n(z)+e_{n,k}.
\end{equation}
Within an epoch, $k=0,\ldots,N_n-1$ indexes its $N_n$ operator evaluations;
the last evaluation produces the stored restart candidate.  If
$\widetilde p$ is the returned primal point and
$\Delta p=\widetilde p-p_n(z)$, then the associated perturbation of the dual
candidate is $-2\sigma_nA\Delta p$.  The off-diagonal terms in
\eqref{eq:pdhg-metric} cancel exactly, giving
\begin{equation}
\label{eq:prox-error-to-operator-error}
    \|e_{n,k}\|_{\mathcal M_n}
    =
    \|(\Delta p,-2\sigma_nA\Delta p)\|_{\mathcal M_n}
    =
    \tau_n^{-1/2}\|\Delta p\|.
\end{equation}
This identity lets any proximal-error certificate control the perturbation of
the full primal-dual operator.

Let $a^n$ now denote the current, possibly inexact, anchor, and define
\begin{equation}
\label{eq:aggregate-operator-error}
    d_n
    :=
    \operatorname{dist}_{\mathcal M_n}(a^n,Z^\star),
    \qquad
    E_n
    :=
    \sum_{k=0}^{N_n-1}\|e_{n,k}\|_{\mathcal M_n}.
\end{equation}
Within each epoch, the exact trajectory used below is a comparison trajectory
started from this same anchor.

\stepcounter{theorem}
\edef\epochperturbationnumber{\thetheorem}
The one-epoch stability estimate stated and proved as
Lemma~\ref{lem:theory-epoch-perturbation} in
Appendix~\ref{app:proof-inexact-local-linear} shows that the actual restart
candidate differs from its exact comparison candidate by at most
$2\rho E_n$ in the $\mathcal M_n$-metric.  Whenever the exact comparison
trajectory lies in $U$, combining this estimate with
Theorem~\ref{thm:exact-local-linear} gives the perturbed recurrence
\begin{equation}
\label{eq:inexact-recurrence}
    d_{n+1}
    \le
    qd_n+2\rho\chi E_n.
\end{equation}

For a relative-error parameter $\eta\ge0$, set
\[
    q_\eta
    :=
    q+2\rho\chi\eta,
    \qquad
    R_\eta
    :=
    \frac{2(1+\rho\eta)d_0}
         {\sqrt{m_-}(1-q_\eta)}.
\]

\begin{theorem}[local linear convergence with inexact proximal steps]
\label{thm:inexact-local-linear}
Suppose Assumptions~\ref{ass:uniform-metrics} and
\ref{ass:fp-sharpness} hold, $N_n\ge N_{\min}\ge2$, and $q<1$ as in
Theorem~\ref{thm:exact-local-linear}.  If
\begin{equation}
\label{eq:rh-relative-epoch-error}
    E_n\le\eta d_n,
    \qquad
    q_\eta<1,
\end{equation}
and $\overline B_{R_\eta}(a^0)\subseteq U$, then
\begin{equation}
\label{eq:inexact-relative-rate}
    d_n\le q_\eta^nd_0,
    \qquad n\ge0.
\end{equation}
In particular, the KKT distance converges Q-linearly to zero and the anchors
converge R-linearly to a KKT point.
\end{theorem}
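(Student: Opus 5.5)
The plan is an induction on $n$ that carries two hypotheses together. The first is the rate bound $d_m\le q_\eta^m d_0$ for $m\le n$. The second is a localization statement: the inexact states $\widetilde h^{m,k}$, the exact comparison states $h^{m,k}$ started from $a^m$, and the candidates $\mathcal T_m(h^{m,k})$ all lie in $\overline B_{R_\eta}(a^0)\subseteq U$. Given both, the recurrence \eqref{eq:inexact-recurrence} applies to epoch $n$:
\[
d_{n+1}\le qd_n+2\rho\chi E_n\le (q+2\rho\chi\eta)d_n=q_\eta d_n .
\]
The first inequality uses Lemma~\ref{lem:theory-epoch-perturbation} together with the exact contraction \eqref{eq:exact-contraction} of Theorem~\ref{thm:exact-local-linear}. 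The second uses the budget \eqref{eq:rh-relative-epoch-error}. The base case $n=0$ is trivial. So the real work is propagating the localization.

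For localization, fix an epoch $m$ and choose $z^\star_m=\Pi^{\mathcal M_m}_{Z^\star}(a^m)$, so that $\|a^m-z^\star_m\|_{\mathcal M_m}=d_m$. Then:
\begin{itemize}
\item Since $\mathcal S_{\rho,m}$ is nonexpansive in $\mathcal M_m$ and fixes $z^\star_m$, the Halpern recursion \eqref{eq:theory-rh-step} is a convex combination of $\mathcal S_{\rho,m}(h^{m,k})$ and $a^m$. Induction on $k$ then gives $\|h^{m,k}-z^\star_m\|_{\mathcal M_m}\le d_m$.
\item Firm nonexpansiveness of $\mathcal T_m$ gives the same bound for the candidates.
\item The perturbation lemma bounds the inexact states by an extra $2\rho E_m\le 2\rho\eta d_m$.
\end{itemize}
Hence every point generated in epoch $m$ lies within $\mathcal M_m$-distance $2d_m+2\rho\eta d_m=2(1+\rho\eta)d_m$ of $a^m$, and by \eqref{eq:uniform-metric-bounds} within Euclidean distance $2(1+\rho\eta)d_m/\sqrt{m_-}$.

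To chain epochs, use $a^{m+1}=\widetilde{\mathcal T}(\widetilde h^{m,N_m-1})$. Its Euclidean distance from $a^m$ is also at most $2(1+\rho\eta)d_m/\sqrt{m_-}$. Summing this over $m\le n$ and using the inductive rate bound gives, for every point of epoch $n$,
\[
\|w-a^0\|\le \frac{2(1+\rho\eta)}{\sqrt{m_-}}\sum_{m\le n}q_\eta^m d_0<R_\eta .
\]
The sum is over consecutive anchor increments plus one within-epoch excursion. This closes the localization.

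The main obstacle is bookkeeping. The metrics change between epochs, so the distances $d_m$ are measured in $\mathcal M_m$, while the ball lives in the Euclidean norm. The constant $\chi$ is already absorbed in \eqref{eq:inexact-recurrence}, since $d_{n+1}$ is measured in $\mathcal M_{n+1}$. I will pass everything to Euclidean distance via $m_-$ before summing, and I will check that the within-epoch excursion and the anchor-to-anchor steps are not double counted. If the constant comes out slightly off, for instance $\sqrt{m_+/m_-}$ factors, I would bound $\|a^m-z^\star_m\|$ directly via $d_m/\sqrt{m_-}$, which matches the form of $R_\eta$.

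Finally, $d_n\le q_\eta^n d_0$ is exactly Q-linear decay of the KKT distance. The anchor increments are bounded by $Cq_\eta^m d_0$, so $(a^n)$ is Cauchy, and it converges R-linearly to some $a^\infty$. Because $d_n\to0$ and $Z^\star$ is closed, $a^\infty\in Z^\star$.
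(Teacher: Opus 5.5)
Your proof is correct and follows essentially the same route as the paper. It runs an induction carrying the rate bound $d_m\le q_\eta^m d_0$ together with a retention estimate: you bound each anchor increment and the within-epoch excursion by $2(1+\rho\eta)d_m$ in the $\mathcal M_m$-norm, pass to the Euclidean norm via $m_-$, and sum the geometric series to stay in $\overline B_{R_\eta}(a^0)$, after which the perturbed recurrence $d_{n+1}\le qd_n+2\rho\chi E_n$ closes the step. One small wording point: the Q-linear claim is the one-step inequality $d_{n+1}\le q_\eta d_n$ that your induction step proves, not the cumulative bound $d_n\le q_\eta^n d_0$.
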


The complete proof, together with the epoch-stability lemma on which it
relies, is deferred to Appendix~\ref{app:proof-inexact-local-linear}.  The same
recurrence also covers a geometric error budget $E_n\le D\vartheta^n$: the
anchors then converge R-linearly, with the usual convolution rate determined
by $q$ and $\vartheta$, provided the resulting trajectories remain in $U$.

The relative-error condition \eqref{eq:rh-relative-epoch-error} contains two
unobservable quantities: $E_n$ aggregates distances to unknown exact
proximal points, whereas $d_n$ is the distance to the unknown KKT set.  The
next subsection replaces them with computable certificates.  Stationarity
residuals or objective gaps control the individual operator errors and hence
$E_n$; a movement-based tolerance then scales these residuals by observable
primal-dual displacements and, under a block-comparability condition,
recovers a relative bound of the form $E_n\le\eta_\gamma d_n$.  This turns the
abstract hypothesis of Theorem~\ref{thm:inexact-local-linear} into a
verifiable inner stopping rule.

\subsubsection{Accuracy certificates}

For state $z=(x,y)$, write $v_n(z)=x+\tau_nA^*y$ and define
\begin{equation}
\label{eq:theory-inner-objective}
    H_{n,z}(u)
    :=
    \frac12\langle u,Qu\rangle+\langle c,u\rangle
    +\frac{1}{2\tau_n}\|u-v_n(z)\|^2.
\end{equation}
Its unique minimizer over $K$ is $p_n(z)$.  Set
\[
    H_n:=Q+\tau_n^{-1}I,
    \qquad
    \mu_n:=\lambda_{\min}(H_n)>0,
    \qquad
    L_n:=\lambda_{\max}(H_n).
\]
If an inner algorithm returns $\widetilde p\in K$ and supplies a stationarity
residual
\begin{equation}
\label{eq:general-stationarity-residual}
    r
    \in
    Q\widetilde p+c+\tau_n^{-1}(\widetilde p-v_n(z))
    +N_K(\widetilde p),
\end{equation}
then strong monotonicity gives
\begin{equation}
\label{eq:general-residual-to-error}
    \|\widetilde p-p_n(z)\|
    \le
    \mu_n^{-1}\|r\|.
\end{equation}
Alternatively, if
$H_{n,z}(\widetilde p)-H_{n,z}(p_n(z))\le\Delta$, strong convexity yields
\[
    \|\widetilde p-p_n(z)\|
    \le
    \sqrt{\frac{2\Delta}{\mu_n}}.
\]
Together with \eqref{eq:prox-error-to-operator-error}, these estimates convert
stationarity residuals or objective gaps directly into bounds on
$\|e_{n,k}\|_{\mathcal M_n}$ and therefore on $E_n$.  Thus the outer theorem
is independent of the inner algorithm: first- or second-order methods,
active-set methods, and splitting methods are covered whenever their returned
points and certificates satisfy the stated error budget.

\paragraph{Solver-independent movement-based tolerance.}
Suppose the chosen inner solver returns residuals $r_{n,k}$ satisfying
\eqref{eq:general-stationarity-residual}.  For the inexact
reflected-Halpern states, define the full primal-dual movement
\begin{equation}
\label{eq:theory-full-movement}
    \mathfrak m_{n,k}
    :=
    \|\widehat h^{n,k}-\widehat h^{n,k-1}\|_{\mathcal M_n},
    \qquad
    k=1,\ldots,N_n-1.
\end{equation}
Partition the evaluations into blocks $\{\mathcal B_j\}$, and let
$\bar m_{n,j}$ be the stale movement statistic used on block
$\mathcal B_j$.  Assume
\begin{equation}
\label{eq:theory-block-comparability}
    \sum_j|\mathcal B_j|\bar m_{n,j}
    \le
    C_B\left(
        d_n+\sum_{k=1}^{N_n-1}\mathfrak m_{n,k}
    \right)
\end{equation}
and accept an inner solve whenever
\begin{equation}
\label{eq:theory-periodic-residual}
    \|r_{n,k}\|
    \le
    \tau_n\gamma\bar m_{n,j},
    \qquad
    k\in\mathcal B_j.
\end{equation}
Uniform metric and stepsize bounds, together with
\eqref{eq:general-residual-to-error} and
\eqref{eq:prox-error-to-operator-error}, first give
\[
    E_n
    \le
    C_r\gamma\left(
        d_n+\sum_{k=1}^{N_n-1}\mathfrak m_{n,k}
    \right).
\]
Assume also that $N_n\le N_{\max}<\infty$.  Then the finite
reflected-Halpern composition and the one-epoch stability estimate give
\[
    \sum_{k=1}^{N_n-1}\mathfrak m_{n,k}
    \le
    C_m(d_n+E_n).
\]
After absorbing constants,
$E_n\le C_0\gamma(d_n+E_n)$.  Hence, whenever $C_0\gamma<1$,
\begin{equation}
\label{eq:theory-periodic-absorbed}
    E_n
    \le
    \eta_\gamma d_n,
    \qquad
    \eta_\gamma
    :=
    \frac{C_0\gamma}{1-C_0\gamma}.
\end{equation}
Choosing $\gamma$ sufficiently small that
$q+2\rho\chi\eta_\gamma<1$ makes
\eqref{eq:theory-periodic-absorbed} a valid relative-error budget in
Theorem~\ref{thm:inexact-local-linear} and preserves local linear convergence.
The latter conclusion is understood under the corresponding retention
condition of that theorem.

\subsubsection{An illustrative example: Projected Gradient}

Projected gradient is one convenient way to produce the required inexact
proximal point, but it is not required by the preceding theorem.  With
\[
    \eta_{{\rm PG},n}
    :=
    \frac{2}{L_n+\mu_n},
    \qquad
    \rho_{{\rm PG},n}
    :=
    \frac{L_n-\mu_n}{L_n+\mu_n},
    \qquad
    \rho_{\rm PG}:=\sup_n\rho_{{\rm PG},n}<1,
\]
the projected-gradient map is uniformly contractive.

\begin{proposition}[fixed projected-gradient work]
\label{prop:fixed-pg-work}
Suppose Assumptions~\ref{ass:uniform-metrics} and
\ref{ass:fp-sharpness} hold, $q<1$, and
$N_{\min}\le N_n\le N_{\max}<\infty$.
Warm-start each proximal subproblem at the current reflected-Halpern primal state,
$u^{n,k,0}=\widehat h_x^{n,k}$.  There are local constants
$C_{\rm op},C_w>0$, independent of $n$, $k$, and $J\ge1$, such that exactly
$J$ projected-gradient steps per proximal problem give
\begin{equation}
\label{eq:theory-fixed-j-error}
    E_n
    \le
    C_{\rm op}N_{\max}C_w\rho_{\rm PG}^Jd_n.
\end{equation}
Consequently, with
\begin{equation}
\label{eq:fixed-pg-contraction}
    q_J
    :=
    q+
    2\rho\chi C_{\rm op}N_{\max}C_w\rho_{\rm PG}^J,
\end{equation}
any fixed $J$ satisfying $q_J<1$ gives local Q-linear convergence of the KKT
distance and R-linear convergence of the anchors, provided
$\overline B_{R_{\eta_J}}(a^0)\subseteq U$, where
$\eta_J:=C_{\rm op}N_{\max}C_w\rho_{\rm PG}^J$.
\end{proposition}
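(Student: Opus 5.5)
The plan is to bound the primal proximal error $\delta_{n,k}:=\|u^{n,k,J}-p_n(\widehat h^{n,k})\|$ for each operator evaluation and then convert it into $\|e_{n,k}\|_{\mathcal M_n}$ through \eqref{eq:prox-error-to-operator-error}. First, the projected-gradient map $u\mapsto\Pi_K\bigl(u-\eta_{{\rm PG},n}\nabla H_{n,z}(u)\bigr)$ is a $\rho_{{\rm PG},n}$-contraction. This holds because $H_{n,z}$ is $\mu_n$-strongly convex and $L_n$-smooth, the stepsize $2/(L_n+\mu_n)$ makes the gradient step a $\rho_{{\rm PG},n}$-contraction, and the projection $\Pi_K$ is nonexpansive. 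The exact proximal point $p_n(z)$ is a fixed point of this map, so after $J$ steps
\[
    \delta_{n,k}\le\rho_{\rm PG}^J\,\|\widehat h_x^{n,k}-p_n(\widehat h^{n,k})\|.
\]
By \eqref{eq:prox-error-to-operator-error}, this yields $\|e_{n,k}\|_{\mathcal M_n}\le\tau_n^{-1/2}\rho_{\rm PG}^J\|\widehat h_x^{n,k}-p_n(\widehat h^{n,k})\|$, and $\tau_n^{-1/2}$ is uniformly bounded by Assumption~\ref{ass:uniform-metrics}.

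Next I bound the warm-start distance by the fixed-point residual. The quantity $\|\widehat h_x^{n,k}-p_n(\widehat h^{n,k})\|$ is the primal block of $\widehat h^{n,k}-\mathcal T_n\widehat h^{n,k}$, so it is at most $m_-^{-1/2}R_n(\widehat h^{n,k})$. Since $\mathcal T_n$ is firmly nonexpansive with fixed set $Z^\star$, I get $R_n(w)\le 2\operatorname{dist}_{\mathcal M_n}(w,Z^\star)$. It therefore suffices to show
\[
    \operatorname{dist}_{\mathcal M_n}(\widehat h^{n,k},Z^\star)\le C(d_n+E_n)
\]
along the inexact epoch. For this I use the finite reflected-Halpern recursion. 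The relaxed reflection $\mathcal S_{\rho,n}$ is nonexpansive and fixes every $z^\star\in Z^\star$. Its inexact version adds an error of size $2\rho\|e_{n,k}\|_{\mathcal M_n}$. Induction on the recursion \eqref{eq:theory-rh-step} with anchor $a^n$ then gives $\|\widehat h^{n,k}-z^\star\|_{\mathcal M_n}\le\|a^n-z^\star\|_{\mathcal M_n}+2\rho\sum_{j<k}\|e_{n,j}\|_{\mathcal M_n}$. This is the same estimate used in the epoch-stability Lemma~\ref{lem:theory-epoch-perturbation}. Taking $z^\star$ to be the projection of $a^n$ yields the claim with $C=\max\{1,2\rho\}$. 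The same bound, combined with the smallness of $d_0$ and the retention ball, keeps all states in $U$. This is where locality enters, and it is why the constants are called local.

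Combining the two steps gives
\[
    E_n\le\sum_{k=0}^{N_n-1}c\,\rho_{\rm PG}^J(d_n+E_n)\le N_{\max}c\,\rho_{\rm PG}^J(d_n+E_n),
\]
where $c$ gathers $\tau_n^{-1/2}$, $m_-^{-1/2}$, the factor $2$, and $C$. I will name this constant $C_{\rm op}C_w$, with $C_{\rm op}$ coming from the operator/metric conversion and $C_w$ from the warm-start bound. The main obstacle is the circular appearance of $E_n$ on the right-hand side.

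I plan to absorb it in one of two ways. If $N_{\max}c\rho_{\rm PG}^J\le 1/2$, I absorb directly and enlarge the constant by a factor of $2$. Otherwise I use the alternative bound: each individual error satisfies $\|e_{n,k}\|\le\rho_{\rm PG}^J\cdot(\text{distance})\le\text{distance}$ because $\rho_{\rm PG}\le1$, so $E_n\le c'N_{\max}d_n$ uniformly in $J$ by the same induction. Substituting this back gives $E_n\le C_{\rm op}N_{\max}C_w\rho_{\rm PG}^Jd_n$ with $C_w$ independent of $J$, which is \eqref{eq:theory-fixed-j-error}.

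With $\eta_J:=C_{\rm op}N_{\max}C_w\rho_{\rm PG}^J$, the hypothesis of Theorem~\ref{thm:inexact-local-linear} holds, and its conclusion yields Q-linear decay with factor $q_J$ under the stated retention condition.
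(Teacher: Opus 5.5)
Your argument is correct, but it controls the warm-start gap by a different mechanism than the paper.

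The paper's appendix argument is written for the averaged epochs, with $T$ in place of $N_{\max}$, and it works at the level of the inexact trajectory. A warm-started PG run leaves a KKT point unchanged. The $J$-step PG map is Lipschitz jointly in its warm start and its proximal input, with constants uniform in $J$: $\|T_v^J(u)-T_{v'}^J(u')\|\le\rho_{\rm PG}^J\|u-u'\|+\eta_{\rm PG}\tau^{-1}\frac{1-\rho_{\rm PG}^J}{1-\rho_{\rm PG}}\|v-v'\|$. Composing with the affine dual update and inducting over the epoch gives $\|\widetilde z^{n,k}-z_n^\star\|\le C_{\rm in}d_n$. Nonexpansiveness of the exact prox turns this into a warm-start gap of at most $C_wd_n$, and the PG contraction supplies $\rho_{\rm PG}^J$.

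You never treat the inexact step as a map. Warm-starting at $\widehat h_x^{n,k}$ makes the warm-start gap the primal block of $\widehat h^{n,k}-\mathcal T_n\widehat h^{n,k}$. Firm nonexpansiveness of $\mathcal T_n$ bounds this by the distance to $Z^\star$. Nonexpansiveness of $\mathcal S_{\rho,n}$ in the perturbed Halpern recursion then bounds that distance additively by $d_n+2\rho\sum_{j<k}\|e_{n,j}\|_{\mathcal M_n}$. The cost is a self-referential inequality for $E_n$, which you close correctly.

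What each route buys:
\begin{itemize}
\item Your route needs no stability of the PG map in the proximal input.
\item Your route also needs no locality: your bound on $E_n$ holds globally, and locality enters only through the retention hypothesis of the inexact theorem.
\item Whenever $N_{\max}c\,\rho_{\rm PG}^J\le 1/2$, your route yields $E_n\le 2cN_{\max}\rho_{\rm PG}^Jd_n$ with $c$ independent of the epoch length. The paper's Lipschitz induction generally produces a constant that grows geometrically with the epoch length.
\item The paper's route does not rely on the Halpern structure. It applies to any finite composition of Lipschitz steps fixing $Z^\star$.
\end{itemize}

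Two minor points:
\begin{itemize}
\item The uniform-in-$J$ a priori bound in your second case is a Gronwall-type estimate whose constant grows like $(1+2\rho c)^{N_{\max}}$, not like $c'N_{\max}$. This is harmless, since the constants may depend on $N_{\max}$.
\item Firm nonexpansiveness already gives $R_n(w)\le\operatorname{dist}_{\mathcal M_n}(w,Z^\star)$, without the factor $2$.
\end{itemize}
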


The proof is deferred to Appendix~\ref{app:proof-fixed-pg-work}.
Theorem~\ref{thm:inexact-local-linear} is an oracle result: its proof does not
use the update formula of a particular inner solver.  Projected gradient is
only one example for which the conditions can be verified explicitly; the
theorem does not claim convergence for arbitrary uncontrolled errors.  The
algorithm always acts on the original conic QP \eqref{eq:cqp}.  The rotated-SOC
lifting in Section~\ref{sec:conic-geometry} is solely an analytical device for
verifying smoothed-gap quadratic growth.
\fi

\section{Uniform Local Quadratic Growth}
\label{sec:conic-geometry}

\ifdefined\AVERAGEDPDHG
Section~\ref{sec:averaged-restart-theory} showed that uniform local
quadratic growth of the smoothed gap is sufficient for local linear
convergence.  This section characterizes that property through explicit
primal and dual error bounds.  The proof first establishes the product
geometry of the KKT set and an exact decomposition of the smoothed gap, which
identifies separate primal and dual error-bound certificates.  It then uses a
RSOC lifting to verify the primal error bound and a weighted projection
residual to verify the dual error bound.  Combining these two bounds yields
uniform local quadratic growth.  The lifting is used only to justify a primal
error bound.  Algorithm~\ref{alg:pdhcq1} continues to operate directly on the
original conic QP \eqref{eq:cqp}.  All proofs of this section are deferred to Appendix~\ref{app:sec3-proofs}.  Throughout this section, $\xi>0$ is fixed.
\else
Section~\ref{sec:rh-restart-theory} establishes local linear convergence of
restarted reflected-Halpern PDHG from fixed-point residual sharpness.  This
section develops a complementary geometric result: it characterizes uniform
local quadratic growth of the smoothed primal-dual gap through explicit
primal and dual error bounds.  The two properties are related through the
regularity of the KKT system, but smoothed-gap quadratic growth is not used as
a substitute for fixed-point sharpness in Section~\ref{sec:rh-restart-theory}.
The proof first establishes the product geometry of the KKT set and an exact
decomposition of the smoothed gap, which identifies separate primal and dual
error-bound certificates.  It then uses an RSOC lifting to verify the
primal error bound and a weighted projection residual to verify the dual
error bound.  Combining these two bounds yields uniform local quadratic
growth.
The lifting is used only to justify a primal error bound.  Algorithm
\ref{alg:pdhcq1} continues to operate directly on the original conic QP
\eqref{eq:cqp}.  All proofs of this section are deferred to Appendix~\ref{app:sec3-proofs}.

For completeness, recall the gap function used in this section.  For
$z=(x,y)\in K\times Y$ and
$\widehat z=(\widehat x,\widehat y)\in K\times Y$, the two-point
primal-dual gap is
\begin{equation}
\label{eq:ordinary-duality-gap}
    \mathcal Q(z,\widehat z)
    :=
    \mathcal L(x,\widehat y)-\mathcal L(\widehat x,y).
\end{equation}
The ordinary gap obtained by maximizing this expression may be infinite on
unbounded domains.  Following the smoothed-gap framework of
\cite{fercoq2022quadratic}, and its use in restarted first-order QP methods
\cite{lu2023practical,huang2025restarted}, fix $\xi>0$ and define
\begin{equation}
\label{eq:smoothed-gap}
    G_\xi(z;\dot z)
    :=
    \sup_{\widehat x\in K,\ \widehat y\in Y}
    \left\{
        \mathcal Q(z,\widehat z)
        -\frac{\xi}{2}\|\widehat x-\dot x\|^2
        -\frac{\xi}{2}\|\widehat y-\dot y\|^2
    \right\},
\end{equation}
where $\dot z=(\dot x,\dot y)\in E\times Y$ is the prescribed center.  The
quadratic penalty makes the supremum finite and permits a local quadratic
error bound.
\fi

\subsection{Exact smoothed-gap decomposition}

For a fixed $x^\star\in X^\star$, introduce the affine slack set and the
complementary face
\[
    \mathcal D_Q
    :=Qx^\star+c-\operatorname{range}A^*,
    \qquad
    \mathcal F_{x^\star}
    :=K^*\cap(x^\star)^\perp.
\]

\begin{lemma}[geometry of the KKT solution set]
\label{lem:kkt-solution-geometry}
Every $x^\star\in X^\star$ can be paired with every
$y^\star\in Y^\star$.  Consequently,
\begin{equation}
\label{eq:solution-product}
    Z^\star=X^\star\times Y^\star,
    \qquad
    \operatorname{dist}^2((x,y),Z^\star)
    =
    \operatorname{dist}^2(x,X^\star)
    +
    \operatorname{dist}^2(y,Y^\star).
\end{equation}
Then $\mathcal D_Q$ is independent of the chosen $x^\star$, and the optimal
slack set is
\begin{equation}
\label{eq:optimal-slack-intersection}
    \mathcal S^\star
    =
    \mathcal D_Q\cap\mathcal F_{x^\star}.
\end{equation}
\end{lemma}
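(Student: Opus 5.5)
The plan is to use the convex saddle-point structure. First I will show that the primal solutions share a common value of $Qx$, and then show that every primal solution pairs with every dual solution.

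\textbf{Step 1: common $Qx$.} Every KKT point $(x^\star,y^\star)$ satisfies \eqref{eq:kkt-cqp}, so $x^\star$ minimizes $f$ over the feasible set $\{Ax=b,\ x\in K\}$. Hence $X^\star$ is exactly the set of primal minimizers, provided some KKT point exists, which we assume ($Z^\star\neq\emptyset$). Take two minimizers $x_1,x_2$. Convexity of $f$ on the segment between them, together with optimality of both, forces $f$ to be constant along the segment. The second derivative of $f$ along the segment is $\langle x_1-x_2,Q(x_1-x_2)\rangle$, so this quantity vanishes. Since $Q\succeq0$, we get $Q x_1=Qx_2$. Constancy of $f$ then also gives $\langle c,x_1\rangle=\langle c,x_2\rangle$, although this is not needed below.

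\textbf{Step 2: product structure.} Fix $(x_1,y_1)\in Z^\star$ and $x_2\in X^\star$; the goal is to show $(x_2,y_1)\in Z^\star$.
\begin{itemize}
\item Primal feasibility $Ax_2=b$ and $x_2\in K$ holds because $x_2$ is a KKT primal point.
\item By Step 1, $s(x_2,y_1)=Qx_2+c-A^*y_1=Qx_1+c-A^*y_1=s(x_1,y_1)$, so $s(x_2,y_1)\in K^*$.
\item For complementarity, let $s_1=s(x_1,y_1)$ and note
\[
\langle x_2,s_1\rangle=\langle x_2-x_1,s_1\rangle=\langle x_2-x_1,Qx_1+c\rangle-\langle A(x_2-x_1),y_1\rangle .
\]
The last term is zero because $A(x_2-x_1)=0$.
\item The term $\langle x_2-x_1,Qx_1+c\rangle$ is the directional derivative of $f$ at $x_1$ toward the feasible point $x_2$. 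It is $\ge0$ by optimality of $x_1$. Since $f(x_2)=f(x_1)$ and $Qx_2=Qx_1$, we also have $f(x_2)-f(x_1)=\langle x_2-x_1,Qx_1+c\rangle+\tfrac12\|x_2-x_1\|_Q^2=\langle x_2-x_1,Qx_1+c\rangle$. Therefore it equals $0$.
\item Alternatively, $\langle x_2,s_1\rangle\ge0$ from $x_2\in K$ and $s_1\in K^*$, combined with the symmetric computation, gives the same conclusion.
\end{itemize}
Hence $(x_2,y_1)\in Z^\star$, which yields $Z^\star=X^\star\times Y^\star$. The distance identity follows immediately, because the squared product norm splits and the projection onto a product set is the product of the projections.

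\textbf{Step 3: slack set.} By Step 1, $Qx^\star$ is independent of the choice of $x^\star\in X^\star$, so $\mathcal D_Q$ is independent of it as well. The optimal slack set is $\mathcal S^\star=\{s(x^\star,y^\star):(x^\star,y^\star)\in Z^\star\}=\{Qx^\star+c-A^*y:y\in Y^\star\}$ for the fixed $x^\star$, again using Step 1 and the product structure. It remains to show that $y\in Y^\star$ if and only if $Qx^\star+c-A^*y\in K^*\cap(x^\star)^\perp$. This holds because, by Step 2, $y\in Y^\star$ is equivalent to $(x^\star,y)$ satisfying \eqref{eq:kkt-cqp}. Thus $\mathcal S^\star=\mathcal D_Q\cap\mathcal F_{x^\star}$.

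\textbf{Main obstacle.} The main obstacle is the complementarity step in Step 2, namely showing $\langle x_2,s_1\rangle=0$. It hinges on the equality $Qx_1=Qx_2$ from Step 1 and on both points attaining the same optimal value.
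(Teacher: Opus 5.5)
Your proof is correct and follows the paper's argument essentially step for step. $Qx$ is constant on $X^\star$ because $\langle d,Qd\rangle$ vanishes along a segment of minimizers; complementarity transfers to $(x_2^\star,y_1^\star)$ via $\langle s(x_1^\star,y_1^\star),x_2^\star-x_1^\star\rangle=f(x_2^\star)-f(x_1^\star)=0$; and the slack-set identity follows by reading off the KKT conditions for $(x^\star,y)$ with $s=Qx^\star+c-A^*y\in\mathcal D_Q$.

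The only loose remark is your Step~1 claim that $X^\star$ is exactly the set of primal minimizers. You justify it there only as an inclusion, but that inclusion is all you use, and the reverse inclusion follows from your Step~2 argument anyway.
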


Fix a KKT center $z^\star=(x^\star,y^\star)$ and set
$s^\star=Qx^\star+c-A^*y^\star$.  Define
\[
    P_\xi(x;z^\star)
    :=
    G_\xi((x,y^\star);z^\star),
    \qquad
    D_\xi(y;z^\star)
    :=
    G_\xi((x^\star,y);z^\star).
\]
These are the primal and dual components of the standard smoothed-gap
decomposition \cite[Lemma~5]{lu2023practical}, i.e.,
$$
G_\xi(z;z^\star) = P_\xi(x;z^\star) + D_\xi(y;z^\star).
$$
The exact identities needed
below are also derived in the proof of Theorem~\ref{thm:qg}.
Fix a reference point $\bar z=(\bar x,\bar y)\in Z^\star$.  All local error
bounds and regularity assumptions below are imposed near this reference
point, and their constants are required to be uniform over all KKT centers
in the local solution stratum.

For the primal component, define the complementarity-curvature error
\begin{equation}
\label{eq:primal-curvature-error}
    E_P(x;z^\star)
    :=
    \frac12\|x-x^\star\|_Q^2+\langle s^\star,x\rangle.
\end{equation}
Both terms are nonnegative for $x\in K$.
\begin{definition}[uniform local primal error bound]
\label{def:primal-eb}
We say that the primal error bound holds uniformly locally around $\bar z$
if there are neighborhoods $V_P$ of $\bar z$ and $V_x$ of $\bar x$, and
constants $a_P,b_P>0$, such that, for every
$z^\star=(x^\star,y^\star)\in Z^\star\cap V_P$ and every
$x\in K\cap V_x$,
\begin{equation}
\label{eq:primal-eb}
    \operatorname{dist}^2(x,X^\star)
    \le
    a_PE_P(x;z^\star)
    +b_P\|Ax-b\|^2.
\end{equation}
\end{definition}

For the dual component, set
\[
    M_\xi:=Q+\xi I\succ0,
    \qquad
    s^\star(y):=Qx^\star+c-A^*y,
\]
and let
\[
    \Pi_K^{M_\xi}(v)
    :=
    \arg\min_{u\in K}\frac12\|u-v\|_{M_\xi}^2.
\]
The corresponding projection point and residual are
\[
    u_y
    :=
    \Pi_K^{M_\xi}
    \left(x^\star-M_\xi^{-1}s^\star(y)\right),
    \qquad
    r_y:=x^\star-u_y
    =:R_\xi(y;z^\star).
\]
\begin{definition}[uniform local dual error bound]
\label{def:dual-eb}
We say that the dual error bound holds uniformly locally around $\bar z$
if there are neighborhoods $V_D$ of $\bar z$ and $V_y$ of $\bar y$, and a
constant $\kappa_D>0$, such that, for every
$z^\star=(x^\star,y^\star)\in Z^\star\cap V_D$ and every
$y\in Y\cap V_y$,
\begin{equation}
\label{eq:dual-eb}
    \operatorname{dist}(y,Y^\star)
    \le
    \kappa_D\|R_\xi(y;z^\star)\|_{M_\xi}.
\end{equation}
\end{definition}

\begin{theorem}[characterization of uniform local quadratic growth]
\label{thm:qg}
Uniform local quadratic growth of the smoothed gap holds around $\bar z$;
that is, there are a neighborhood $V$ of $\bar z$ and a constant
$\alpha_\xi>0$ such that
\begin{equation}
\label{eq:uniform-gap-qg}
    G_\xi(z;z^\star)
    \ge
    \alpha_\xi\operatorname{dist}^2(z,Z^\star)
\end{equation}
for every $z\in(K\times Y)\cap V$ and $z^\star\in Z^\star\cap V$, if and
only if the uniform local primal and dual error bounds in
Definitions~\ref{def:primal-eb} and \ref{def:dual-eb} both hold around
$\bar z$.
\end{theorem}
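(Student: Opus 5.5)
The plan is to compute both components of the decomposition $G_\xi(z;z^\star)=P_\xi(x;z^\star)+D_\xi(y;z^\star)$ in closed form. I would then show that $P_\xi$ is equivalent, up to constants, to the right-hand side of \eqref{eq:primal-eb}, and that $D_\xi$ is equivalent to $\|R_\xi(y;z^\star)\|_{M_\xi}^2$. With these two equivalences both directions of Theorem~\ref{thm:qg} follow from Lemma~\ref{lem:kkt-solution-geometry}: since $Z^\star=X^\star\times Y^\star$, we have $\operatorname{dist}^2(z,Z^\star)=\operatorname{dist}^2(x,X^\star)+\operatorname{dist}^2(y,Y^\star)$.

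\emph{Step 1 (primal component).} In $\mathcal Q(z,\widehat z)=\mathcal L(x,\widehat y)-\mathcal L(\widehat x,y)$ the $\widehat y$- and $\widehat x$-suprema separate. For $z=(x,y^\star)$, the $\widehat y$-supremum is explicit:
\[
\sup_{\widehat y}\Bigl\{\langle\widehat y,b-Ax\rangle-\tfrac{\xi}{2}\|\widehat y-y^\star\|^2\Bigr\}=\langle y^\star,b-Ax\rangle+\tfrac{1}{2\xi}\|Ax-b\|^2 .
\]
The $\widehat x$-supremum of $-\mathcal L(\widehat x,y^\star)-\frac{\xi}{2}\|\widehat x-x^\star\|^2$ is attained at $\widehat x=x^\star$, since $x^\star$ minimizes $\mathcal L(\cdot,y^\star)$ over $K$. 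Expanding $f$ around $x^\star$ and using $Ax^\star=b$ and $\langle s^\star,x^\star\rangle=0$ then gives
\[
P_\xi(x;z^\star)=E_P(x;z^\star)+\tfrac{1}{2\xi}\|Ax-b\|^2 .
\]
Hence $P_\xi\ge\alpha\operatorname{dist}^2(x,X^\star)$ holds if and only if \eqref{eq:primal-eb} holds, with the translation $a_P=\alpha^{-1}$, $b_P=(2\alpha\xi)^{-1}$ in one direction and $\alpha^{-1}=\max\{a_P,2\xi b_P\}$ in the other.

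\emph{Step 2 (dual component).} For $z=(x^\star,y)$, the $\widehat y$-term equals $\langle y,b-Ax^\star\rangle=0$. Using $\mathcal L(x^\star,y)=\mathcal L(x^\star,y^\star)$ and a Taylor expansion at $x^\star$, I obtain
\[
D_\xi(y;z^\star)=-\min_{u\in K}\Bigl\{\langle s^\star(y),u-x^\star\rangle+\tfrac12\|u-x^\star\|_{M_\xi}^2\Bigr\}.
\]
This minimization is exactly the $M_\xi$-weighted projection defining $u_y$, so
\[
D_\xi=\langle s^\star(y),r_y\rangle-\tfrac12\|r_y\|_{M_\xi}^2 .
\]
The projection variational inequality at $w=x^\star\in K$ gives $\langle s^\star(y),r_y\rangle\ge\|r_y\|_{M_\xi}^2$, hence the lower bound $D_\xi\ge\frac12\|r_y\|_{M_\xi}^2$. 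Testing the same inequality at $w=0$ and $w=2u_y$ gives $\langle s^\star(y)-M_\xi r_y,u_y\rangle=0$.

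The ``if'' direction then follows directly. Summing Step~1 and the lower bound of Step~2, applying the primal error bound to the $x$-part and the dual error bound to $\frac12\|r_y\|_{M_\xi}^2$, and shrinking $V$ into $V_P\cap V_D$ and $V_x\times V_y$ yields \eqref{eq:uniform-gap-qg} with $\alpha_\xi$ depending only on $a_P,b_P,\kappa_D,\xi$.

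\emph{Step 3 (``only if'', the main obstacle).} Taking $z=(x,y^\star)$ in \eqref{eq:uniform-gap-qg} gives the primal error bound via Step~1. Taking $z=(x^\star,y)$ gives $\alpha_\xi\operatorname{dist}^2(y,Y^\star)\le D_\xi(y;z^\star)$. To conclude the dual error bound I still need the reverse estimate $D_\xi\le C\|r_y\|_{M_\xi}^2$ locally. I expect this to be the main difficulty.

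Writing $\bar w:=s^\star(y)-M_\xi r_y\in K^*$, which is the Fenchel-dual optimum of the projection problem, gives
\[
D_\xi=\tfrac12\|r_y\|_{M_\xi}^2+\langle\bar w,x^\star\rangle,\qquad \langle\bar w,x^\star\rangle=\langle\bar w,r_y\rangle .
\]
So the task is to bound the cross term $\langle\bar w,r_y\rangle$ by a multiple of $\|r_y\|^2$.

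My first attempt would exploit the freedom in the center. Since $s^\star(y)$, and hence $D_\xi$, depends on $z^\star$ only through $x^\star$, I would apply \eqref{eq:uniform-gap-qg} with a nearby center $\tilde x^\star\in X^\star$ for which the complementarity term $\langle\bar w,\tilde x^\star\rangle$ vanishes, and compare residuals. This comparison uses the Lipschitz continuity of $\Pi_K^{M_\xi}$ and the fact that, by Lemma~\ref{lem:kkt-solution-geometry}, the slack $Qx^\star+c$ and the set $\mathcal D_Q$ are constant on $X^\star$. If that fails, I would instead bound $\langle\bar w,r_y\rangle$ by $\|\bar w-\Pi_{\mathcal F_{x^\star}}\bar w\|\,\|r_y\|$ and control the first factor by $\|r_y\|$ through the projection geometry near $\bar z$.
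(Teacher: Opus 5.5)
The computations in Steps~1 and~2 are correct and match the paper, and so is the ``if'' direction and the primal half of ``only if''. There is a genuine gap in the dual half of ``only if'', which you leave open.

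The estimate you set out to prove, $D_\xi(y;z^\star)\le C\|r_y\|_{M_\xi}^2$, is stronger than needed, and neither of your routes obtains it from the stated hypotheses.

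\emph{Route (a)} has nothing to work with when $X^\star$ is a singleton. Then there is no alternative center, and $\langle\bar w,x^\star\rangle$ is in general positive. Even when $X^\star$ is larger, $\bar w$ and $r_y$ move with the center. Lipschitz continuity of $\Pi_K^{M_\xi}$ bounds the change in $r_y$ only by $\|x^\star-\tilde x^\star\|$, not by $\|r_y\|$.

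\emph{Route (b)} needs $\operatorname{dist}(\bar w,\mathcal F_{x^\star})\le C\|u_y-x^\star\|$ with $-\bar w\in N_K(u_y)$. That is exactly the normal-cone calmness of Assumption~\ref{ass:normal-calmness}. It is an additional regularity property that is not among the theorem's hypotheses, and your plan does not derive it from them. The theorem is stated for an arbitrary closed convex cone, whereas the paper verifies calmness only for specific cones under D-SC.

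The missing idea is that no quadratic upper bound in $\|r_y\|$ is needed. Write
\[
    D_\xi(y;z^\star)=\sup_{x^\star+d\in K}\Bigl\{\langle y,Ad\rangle-\langle Qx^\star+c,d\rangle-\tfrac12\|d\|_{M_\xi}^2\Bigr\}.
\]
This is a supremum of functions affine in $y$, so it is convex. Its maximizer $d=u_y-x^\star$ is unique, so by Danskin's theorem it is differentiable with $\nabla D_\xi(y;z^\star)=-Ar_y$. Moreover, with $\widehat y:=\Pi_{Y^\star}(y)$, the pair $(x^\star,\widehat y)$ is a KKT pair by Lemma~\ref{lem:kkt-solution-geometry}, so $D_\xi(\widehat y;z^\star)=0$. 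The gradient inequality then gives
\[
    \alpha_\xi\operatorname{dist}^2(y,Y^\star)\le D_\xi(y;z^\star)\le\langle -Ar_y,\,y-\widehat y\rangle\le\|AM_\xi^{-1/2}\|\,\|r_y\|_{M_\xi}\operatorname{dist}(y,Y^\star).
\]
Dividing by the distance yields \eqref{eq:dual-eb} with $\kappa_D=\|AM_\xi^{-1/2}\|/\alpha_\xi$. A bound that is linear in $\|r_y\|$ times the distance suffices. Your quadratic bound $D_\xi\le C\|r_y\|_{M_\xi}^2$ does hold under quadratic growth, but only as a consequence of this argument, not as a route to it.
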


Theorem~\ref{thm:qg} reduces uniform local quadratic growth exactly to the
two error bounds.  The following subsection verifies them under the
corresponding one-sided strict-complementarity conditions.

\subsection{Primal and dual error bound under strict complementarity}
\label{sec:one-sided-sc-error-bounds}

\paragraph{Primal and dual strict complementarity.}
Let $(x^\star,s^\star)\in K\times K^*$ be a complementary pair.  We say
that it satisfies, respectively,
\[
\begin{aligned}
    \text{primal strict complementarity (P-SC)}
    &:\quad
    x^\star\in\operatorname{ri}\bigl(K\cap(s^\star)^\perp\bigr),\\
    \text{dual strict complementarity (D-SC)}
    &:\quad
    s^\star\in\operatorname{ri}\bigl(K^*\cap(x^\star)^\perp\bigr).
\end{aligned}
\]
Two-sided, or facial, strict complementarity means that both conditions
hold; for product cones, these definitions are understood blockwise.  Our
terminology names the component required to lie in a relative interior.
The terminology of \cite[Definition~2]{ding2023strict} is reversed: their
``dual strict complementarity'' is P-SC above.  We therefore state the
relevant side explicitly whenever that result is invoked.

For $K=\mathbb R_+^n$, complementarity makes P-SC and D-SC equivalent to
$x_i^\star+s_i^\star>0$ for every $i$.  The Goldman-Tucker theorem \cite{goldman1956theory} guarantees the
existence of such an optimal pair for LP whenever both optimal sets are
nonempty, but not that every optimal pair is strictly complementary. More generally, D-SC implies P-SC when the minimal face of $K$ containing
$x^\star$ is exposed; the reverse implication follows from the analogous
property of $K^*$.  Hence the two sides coincide when both $K$ and $K^*$ are
facially exposed \cite{chua2008invariance}.  In particular, for symmetric
cones they are equivalent to
$x^\star+s^\star\in\operatorname{int}K$.  This covers nonnegative,
second-order-cone (SOC) and rotated-SOC, and PSD cones.

For cones with nonexposed faces, P-SC and D-SC may differ; the exponential
cone is a standard example \cite{lindstrom2023exponential}. P-SC and D-SC concern a particular complementary pair.  The uniform bounds
below require the relevant condition over a local certificate family:
the primal error bound is verified under P-SC at every
local KKT center, whereas
Corollary~\ref{cor:dual-sc-slack-regularity} requires each local primal
center to admit a D-SC optimal slack.  These conditions are distinct from
primal and dual nondegeneracy \cite{alizadeh1997complementarity}.

\subsubsection{Primal error bound from a rotated-SOC lifting}

\paragraph{Rotated-SOC lifting.}
We verify the primal error bound in Definition~\ref{def:primal-eb} through an
epigraph lifting.  Choose $B$ such that $Q=B^*B$, and define the rotated SOC as 
\[
    \mathcal Q_r
    :=
    \{(u,v,w):u\ge0,\ v\ge0,\ 2uv\ge\|w\|^2\}.
\]
Since $u\ge\frac12\|Bx\|^2$ is equivalent to $(u,1,Bx)\in\mathcal Q_r$,
problem \eqref{eq:cqp} is equivalent to
\begin{equation}
\label{eq:soc-lifting}
\begin{aligned}
    \min_{x,u,v,w}\quad
        &u+\langle c,x\rangle\\
    \mathrm{s.t.}\quad
        &Ax=b,\qquad x\in K,\\
        &v=1,\qquad w=Bx,\\
        &(u,v,w)\in\mathcal Q_r.
\end{aligned}
\end{equation}
Write $\chi:=(x,u,v,w)$, denote the optimal solution set of
\eqref{eq:soc-lifting} by $\mathcal X_L^\star$, and collect the lifted
equality constraints as
\[
    \mathcal A_L(x,u,v,w):=(Ax,v,w-Bx),
    \qquad
    \widetilde b:=(b,1,0).
\]
At every optimum the epigraph inequality is tight.  Attaching multipliers
$(y,\alpha,t)$ to the equality residuals $b-Ax$, $1-v$, and $Bx-w$ shows
that the lifted dual slacks have the form
\[
    p=(1,-\alpha,-t)\in\mathcal Q_r,
\]
with lifted dual objective $\langle b,y\rangle+\alpha$.  Finally, for every
$x\in K$, define the lifted comparison point
\[
    q(x):=\left(\frac12\|Bx\|^2,1,Bx\right),
    \qquad
    \widehat\chi(x):=(x,q(x)).
\]

\paragraph{Transfer of strict complementarity.}
For any KKT center $z^\star=(x^\star,y^\star)$, 
set
$s^\star:=Qx^\star+c-A^*y^\star$, and define the canonical lift
\[
    u^\star:=\frac12\|Bx^\star\|^2,
    \qquad
    q^\star:=(u^\star,1,Bx^\star),
    \qquad
    t^\star:=Bx^\star,
    \qquad
    \alpha^\star:=-u^\star,
    \qquad
    p^\star:=(1,u^\star,-Bx^\star),
\]
together with
\[
    \chi^\star:=(x^\star,q^\star),\qquad
    \lambda^\star:=(y^\star,\alpha^\star,t^\star),\qquad
    \zeta^\star:=(s^\star,p^\star).
\]
The point $\chi^\star$ is primal feasible, $\zeta^\star$ is the lifted
slack at $\lambda^\star$ and is dual feasible, and
\[
    \langle\chi^\star,\zeta^\star\rangle
    =
    \langle x^\star,s^\star\rangle
    +2u^\star-\|Bx^\star\|^2
    =0,
\]
so $(\chi^\star,\lambda^\star,\zeta^\star)$ is a lifted KKT certificate. Besides, the added RSOC
block is always strictly complementary because
\[
    q^\star+p^\star
    =(u^\star+1,u^\star+1,0)
    \in\operatorname{ri}\mathcal Q_r.
\]
Consequently, the lifted center satisfies P-SC exactly when the original
center does since
\begin{align*}
    \chi^\star = (x^\star,q^\star)   \in \operatorname{ri}\bigl(K \cap(s^\star)^\perp\bigr) \times \operatorname{ri}\bigl(\mathcal Q_r \cap(p^\star)^\perp\bigr) = \operatorname{ri}\bigl((K\times\mathcal Q_r)\cap(s^\star,p^\star)^\perp\bigr),
\end{align*}
where the last equality holds because relative interiors commute with products. Similarly, D-SC also transfers
blockwise between the original and the lifted certificates.  

The verification is organized around the following residual error bound for
the lifted problem.

\begin{assumption}[uniform lifted residual error bound]
\label{ass:lifted-residual-eb}
There are a neighborhood $V_{\mathrm{SC}}$ of $\bar z$, a common open
neighborhood $\mathcal N$ of $\widehat\chi(\bar x)$, and constants
$\kappa_L,\gamma_L>0$ such that, for every
$z^\star\in Z^\star\cap V_{\mathrm{SC}}$ with canonical lifted slack
$\zeta^\star=(s^\star,p^\star)$,
\begin{equation}
\label{eq:lifted-residual-error-bound}
    \operatorname{dist}(\chi,\mathcal X_L^\star)
    \le
    \kappa_L\langle\zeta^\star,\chi\rangle^{1/2}
    +\gamma_L\|\mathcal A_L\chi-\widetilde b\|,
    \qquad
    \chi\in(K\times\mathcal Q_r)\cap\mathcal N.
\end{equation}
\end{assumption}

\begin{proposition}[verification of the lifted residual error bound]
\label{prop:lifted-eb}
Let $K$ be a finite Cartesian product of nonnegative cones, second-order
cones (including rotated second-order cones), and positive-semidefinite
cones.  If every $z^\star\in Z^\star\cap V_{\mathrm{SC}}$ satisfies P-SC,
for some neighborhood $V_{\mathrm{SC}}$ of $\bar z$, then
Assumption~\ref{ass:lifted-residual-eb} holds.
\end{proposition}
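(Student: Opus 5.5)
The plan is to reduce the lifted residual error bound to the error bound of \cite{ding2023strict}, applied to the lifted linear-objective conic program \eqref{eq:soc-lifting}. That program has a linear objective, equality constraints $\mathcal A_L\chi=\widetilde b$, and cone $K\times\mathcal Q_r$. The cone is a finite product of nonnegative, second-order (rotated SOC is a linear isometric image of an SOC), and PSD cones, so it lies in the class treated there.

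\textbf{Step 1: one lifted center.} For a single KKT center $z^\star$, the canonical lift $(\chi^\star,\lambda^\star,\zeta^\star)$ is a lifted KKT certificate. By the transfer argument above, P-SC at $z^\star$ is equivalent to $\chi^\star\in\operatorname{ri}((K\times\mathcal Q_r)\cap(\zeta^\star)^\perp)$. This is exactly the relative-interior condition that \cite[Definition~2]{ding2023strict} calls ``dual strict complementarity'' (the naming is reversed, as noted earlier). Their main error bound then gives, for $\chi$ in the cone,
\[
\operatorname{dist}(\chi,\mathcal X_L^\star)\le \kappa\,\langle\zeta^\star,\chi\rangle^{1/2}+\gamma\|\mathcal A_L\chi-\widetilde b\|.
\]
The square-root exponent comes from the facial-reduction depth of one under strict complementarity for these cones. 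Their bound also carries an objective-gap term $\langle \tilde c,\chi\rangle-\mathrm{opt}$. Using $\langle \tilde c,\chi\rangle-\mathrm{opt}=\langle\zeta^\star,\chi\rangle+\langle\lambda^\star,\mathcal A_L\chi-\widetilde b\rangle$, this term is absorbed into the other two, at the cost of a linear residual multiplied by $\|\lambda^\star\|$. On a bounded set, $\langle\zeta^\star,\chi\rangle$ is bounded, so $\langle\zeta^\star,\chi\rangle\le C\langle\zeta^\star,\chi\rangle^{1/2}$ and the square-root form survives. The bound is global on bounded sets, so restricting to $\mathcal N$ costs nothing.

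\textbf{Step 2: uniformity.} This is the main obstacle: $\kappa_L,\gamma_L$ must not depend on the center $z^\star\in Z^\star\cap V_{\mathrm{SC}}$. Two facts help. First, $\mathcal X_L^\star$ does not depend on $z^\star$. Second, the face $(K\times\mathcal Q_r)\cap(\zeta^\star)^\perp$ is the optimal face, by the solution-set geometry of Lemma~\ref{lem:kkt-solution-geometry}. So any two strictly complementary centers expose the same face $\mathcal F$ of $K$ (the minimal face containing $X^\star$) and the same lifted face.

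I would then write the Ding--Udell constant explicitly. It is built from a Hoffman constant for $\{\mathcal A_L\chi=\widetilde b\}\cap\operatorname{span}\mathcal F$, and from the local constant in the inequality $\operatorname{dist}(\chi,\mathcal F)\le c(\zeta^\star)\langle\zeta^\star,\chi\rangle^{1/2}$ on bounded sets. For the LP, SOC and PSD blocks the latter is available in closed form. For example, for PSD it is controlled by $\lambda_{\min}^{+}$ of the slack on its range, and for SOC by the ratio of $s_0$ to the gap $s_0-\|\bar s\|$ or by the gap itself. These quantities depend continuously on $\zeta^\star$ and stay bounded away from degeneracy when $\zeta^\star$ is close to $\bar\zeta$ and lies in the relative interior of the same conjugate face. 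Because the exposed face is fixed, shrinking $V_{\mathrm{SC}}$ keeps $s^\star$ near $\bar s$ in the relative interior of that face. Therefore $\sup c(\zeta^\star)<\infty$, and $\sup\|\lambda^\star\|<\infty$ as well. The canonical lifted slack $p^\star=(1,u^\star,-Bx^\star)$ also varies continuously, and it remains strictly inside its face because $q^\star+p^\star\in\operatorname{ri}\mathcal Q_r$.

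\textbf{Step 3: assembly.} Take $\mathcal N$ to be a bounded open ball around $\widehat\chi(\bar x)$, and take $\kappa_L,\gamma_L$ to be the suprema of the resulting constants over $Z^\star\cap V_{\mathrm{SC}}$. This gives \eqref{eq:lifted-residual-error-bound} and hence Assumption~\ref{ass:lifted-residual-eb}.
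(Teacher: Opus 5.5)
Your proposal is correct and follows essentially the same route as the paper. You apply the error bound of \cite{ding2023strict} to the lifted program under the transferred P-SC, absorb the linear complementarity term on the bounded set $\mathcal N$, and obtain uniform constants because cross-complementarity under P-SC fixes the complementary face along the stratum, which keeps the blockwise spectral quantities of $\zeta^\star$ bounded away from degeneracy; your explicit absorption of the objective-gap term through $\lambda^\star$, using $\sup\|\lambda^\star\|<\infty$, is a harmless extra step that the paper avoids by citing Corollary~1 of \cite{ding2023strict} directly in the $\langle\zeta^\star,\chi\rangle$ form.
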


Proposition~\ref{prop:lifted-eb} covers the rotated block $\mathcal Q_r$
because the rotated second-order cone is the image of the standard
second-order cone $\{(t,\xi):t\ge\|\xi\|\}$ under the orthogonal map
$(t,\xi_1,\xi_2)\mapsto
\bigl((t+\xi_1)/\sqrt2,\,(t-\xi_1)/\sqrt2,\,\xi_2\bigr)$, a $45^\circ$
rotation of the $(t,\xi_1)$-plane.  Orthogonal maps preserve distances,
faces, and relative interiors, so properties of second-order cones---in
particular the error bounds established in \cite{ding2023strict}---hold
verbatim, with identical constants, for rotated second-order-cone blocks.

The corollary below therefore only needs to compare the original and
lifted solution-set distances.

\begin{corollary}[verification of the uniform local primal error bound]
\label{cor:primal-eb-strict-complementarity}
Under Assumption~\ref{ass:lifted-residual-eb}, the uniform local primal
error bound in Definition~\ref{def:primal-eb} holds with
\[
    a_P=2\kappa_L^2,
    \qquad
    b_P=2\gamma_L^2.
\]
\end{corollary}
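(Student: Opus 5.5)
The plan is to apply Assumption~\ref{ass:lifted-residual-eb} at the lifted comparison point $\chi=\widehat\chi(x)$ for $x\in K$ near $\bar x$, and then compare the two sides of \eqref{eq:lifted-residual-error-bound} with the two sides of \eqref{eq:primal-eb}. The neighborhoods are chosen as follows. Take $V_P:=V_{\mathrm{SC}}$. Since $x\mapsto\widehat\chi(x)$ is continuous, we can choose a neighborhood $V_x$ of $\bar x$ with $\widehat\chi(V_x)\subseteq\mathcal N$. For $x\in K\cap V_x$ we have $\widehat\chi(x)\in K\times\mathcal Q_r$, because $2\cdot\frac12\|Bx\|^2\cdot1=\|Bx\|^2$. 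Hence the lifted bound applies at $\widehat\chi(x)$.

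\textbf{Right-hand side.} This is a direct computation. For the complementarity term,
\[
\langle\zeta^\star,\widehat\chi(x)\rangle=\langle s^\star,x\rangle+\tfrac12\|Bx\|^2+u^\star-\langle Bx^\star,Bx\rangle .
\]
Since $u^\star=\frac12\|Bx^\star\|^2$, the last three terms combine to $\frac12\|B(x-x^\star)\|^2=\frac12\|x-x^\star\|_Q^2$. So $\langle\zeta^\star,\widehat\chi(x)\rangle=E_P(x;z^\star)$ exactly. For the residual term, $\mathcal A_L\widehat\chi(x)-\widetilde b=(Ax-b,0,0)$, so its norm is $\|Ax-b\|$. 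Squaring and using $(a+b)^2\le2a^2+2b^2$ gives
\[
\operatorname{dist}^2(\widehat\chi(x),\mathcal X_L^\star)\le 2\kappa_L^2E_P(x;z^\star)+2\gamma_L^2\|Ax-b\|^2 .
\]

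\textbf{Left-hand side.} It remains to show $\operatorname{dist}(x,X^\star)\le\operatorname{dist}(\widehat\chi(x),\mathcal X_L^\star)$. First I identify $\mathcal X_L^\star$. Every lifted feasible point has $v=1$ and $w=Bx$. At an optimum, minimality of $u$ forces $u=\frac12\|Bx\|^2$, as the paper already notes that the epigraph inequality is tight. The objective then equals $f(x)$, so $\mathcal X_L^\star=\{\widehat\chi(x'):x'\in X^\star\}$. Thus the $x$-block of any lifted optimum lies in $X^\star$. Since the Euclidean product norm dominates the norm of the $x$-block, $\|\widehat\chi(x)-\widehat\chi(x')\|\ge\|x-x'\|\ge\operatorname{dist}(x,X^\star)$. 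Taking the infimum over $x'$ gives the claim, and the corollary follows with $a_P=2\kappa_L^2$ and $b_P=2\gamma_L^2$.

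\textbf{Expected obstacle.} There is little difficulty here. The step needing the most care is the exact identification of $\mathcal X_L^\star$, in particular that equivalence of the lifted and original problems gives equal optimal values, so lifted optima project exactly onto $X^\star$. A second point is ensuring that $\widehat\chi(x)$ stays in the common neighborhood $\mathcal N$ uniformly in $z^\star$. This holds because $\mathcal N$ and $V_x$ do not depend on $z^\star$.
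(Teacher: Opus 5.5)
Your proposal is correct and follows the paper's proof step for step. It uses the same neighborhoods $V_P=V_{\mathrm{SC}}$ and $V_x=\widehat\chi^{-1}(\mathcal N)$, the same identities $\langle\zeta^\star,\widehat\chi(x)\rangle=E_P(x;z^\star)$ and $\mathcal A_L\widehat\chi(x)-\widetilde b=(Ax-b,0,0)$, and the same squaring step. The one small difference is in the comparison of distances. The paper uses the constancy of $Bx^\star$ on $X^\star$ (Lemma~\ref{lem:kkt-solution-geometry}) to write $\mathcal X_L^\star=X^\star\times\{(\bar u,1,\bar w)\}$ and get an exact distance identity. You only use that the $x$-block of every lifted optimum lies in $X^\star$, which already suffices for the one-sided inequality $\operatorname{dist}(x,X^\star)\le\operatorname{dist}(\widehat\chi(x),\mathcal X_L^\star)$.
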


The linear lifted feasibility residual in
\eqref{eq:lifted-residual-error-bound} is essential: a square-root residual
would yield $\|Ax-b\|$, rather than $\|Ax-b\|^2$, after squaring and would
not imply Definition~\ref{def:primal-eb}.

\subsubsection{Dual error bound}

We next verify the dual error bound in Definition~\ref{def:dual-eb}.  The
weighted projection defining $R_\xi(y;z^\star)$ yields
$v_y:=M_\xi r_y-s^\star(y)\in N_K(u_y)$.  Normal-cone calmness and
$v_y+s^\star(y)=M_\xi r_y$ control the distance to the complementary face;
linear regularity then gives the distance to the optimal slack set, and a
pseudoinverse of $A^*$ lifts this estimate to the multiplier space.  The key
identity is
\[
    \mathcal S^\star
    =
    \mathcal D_Q\cap\mathcal F_{x^\star}
\]
so we first state the two local regularity properties used in this chain.

\paragraph{Linear regularity.}
Let $C_1$ and $C_2$ be nonempty closed convex sets and let
$\bar s\in C_1\cap C_2$.  The pair $\{C_1,C_2\}$ is locally linearly
regular at $\bar s$ if there are a neighborhood $W$ of $\bar s$ and a
constant $\kappa>0$ such that
\begin{equation}
\label{eq:general-linear-regularity}
    \operatorname{dist}(s,C_1\cap C_2)
    \le
    \kappa\left[
        \operatorname{dist}(s,C_1)+\operatorname{dist}(s,C_2)
    \right],
    \qquad s\in W.
\end{equation}
Uniform local linear regularity means that the neighborhood and modulus can
be chosen independently of the parameter.  Applied to the two sets defining
$\mathcal S^\star$, it gives the following assumption.

\begin{assumption}[uniform local slack regularity]
\label{ass:slack-regularity}
Set $\bar s:=Q\bar x+c-A^*\bar y$.  There are neighborhoods $V_A$ of
$\bar z$ and $W_A$ of $\bar s$, and a constant $\kappa_A>0$, such that,
for every $z^\star=(x^\star,y^\star)\in Z^\star\cap V_A$ and every
$s\in W_A$,
\begin{equation}
\label{eq:slack-regularity}
    \operatorname{dist}(s,\mathcal S^\star)
    \le
    \kappa_A\left[
        \operatorname{dist}(s,\mathcal D_Q)
        +
        \operatorname{dist}(s,\mathcal F_{x^\star})
    \right].
\end{equation}
\end{assumption}

This is the uniform local linear regularity of
$\{\mathcal D_Q,\mathcal F_{x^\star}\}$ along the local KKT stratum.  Since
$s^\star(y)\in\mathcal D_Q$, the first distance in
\eqref{eq:slack-regularity} vanishes along the residual path.  D-SC gives a
direct verification of this property.

\begin{corollary}[dual strict complementarity implies slack regularity]
\label{cor:dual-sc-slack-regularity}
Fix a KKT center $z^\star=(x^\star,y^\star)$.  If its optimal slack satisfies
D-SC,
\begin{equation}
\label{eq:dual-side-strict-complementarity}
    s^\star\in\operatorname{ri}\mathcal F_{x^\star},
\end{equation}
then $\{\mathcal D_Q,\mathcal F_{x^\star}\}$ is locally linearly regular at
$s^\star$,
so the corresponding pointwise slack-regularity bound holds.  If every
$x^\star\in X^\star\cap V_{X,\mathrm{SC}}$, for some neighborhood
$V_{X,\mathrm{SC}}$ of $\bar x$, admits a D-SC optimal slack, then
Assumption~\ref{ass:slack-regularity} holds uniformly on the corresponding
local KKT stratum.
\end{corollary}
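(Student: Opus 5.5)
The plan is to prove the pointwise statement first and then make the constants uniform. Fix $z^\star$ with $s^\star\in\operatorname{ri}\mathcal F_{x^\star}$.

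\textbf{Step 1 (pointwise regularity).} $\mathcal D_Q$ is an affine subspace, so it is polyhedral. $\mathcal F_{x^\star}=K^*\cap(x^\star)^\perp$ is a closed convex cone, namely a face of $K^*$. D-SC says exactly that $\mathcal D_Q\cap\operatorname{ri}\mathcal F_{x^\star}\ni s^\star$ is nonempty. The standard constraint-qualification result then applies: if $C_1$ is polyhedral, $C_2$ is closed convex, and $C_1\cap\operatorname{ri}C_2\neq\emptyset$, the pair is boundedly linearly regular (Bauschke--Borwein). Here is a direct route. Let $L:=\operatorname{span}\mathcal F_{x^\star}$. Because $s^\star\in\operatorname{ri}\mathcal F_{x^\star}$, there is $r>0$ with $\overline B_r(s^\star)\cap(s^\star+L)\subseteq\mathcal F_{x^\star}$ near $s^\star$.
\begin{itemize}
\item For $s$ near $s^\star$, project $s$ onto the affine set $\mathcal D_Q\cap(s^\star+L)$. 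Hoffman's bound for two affine sets gives a point $s'$ in this set with $\|s-s'\|\le\kappa_H[\operatorname{dist}(s,\mathcal D_Q)+\operatorname{dist}(s,L)]$, and $s'$ is close to $s^\star$.
\item Next, $\operatorname{dist}(s,L)\le\operatorname{dist}(s,\mathcal F_{x^\star})$ since $\mathcal F_{x^\star}\subseteq L$. Also $L=s^\star+L$, because $s^\star\in L$.
\item If $\|s'-s^\star\|<r$, then $s'\in\mathcal F_{x^\star}\cap\mathcal D_Q=\mathcal S^\star$, using Lemma~\ref{lem:kkt-solution-geometry}.
\end{itemize}
Together these give \eqref{eq:slack-regularity} with $\kappa_A=\kappa_H$ on a ball of radius about $r/(1+2\kappa_H)$ around $s^\star$.

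\textbf{Step 2 (uniformity).} This is the main obstacle. Both $\kappa_H$ and $r$ depend on $x^\star$ through $L$ and through how deep $s^\star$ sits inside the face. The subspace $\mathcal D_Q$ is independent of $x^\star$ by Lemma~\ref{lem:kkt-solution-geometry}. For the cones under consideration, the face $\mathcal F_{x^\star}$ is determined by the minimal face of $K$ containing $x^\star$ (facial exposedness). The plan is to argue that on a small enough neighborhood the face is locally constant and equal to $\mathcal F_{\bar x}$.
\begin{itemize}
\item Take $\bar x$ with its D-SC slack $\bar s\in\operatorname{ri}\mathcal F_{\bar x}$.
\item Use P-SC/D-SC equivalence for facially exposed cones: $\bar x\in\operatorname{ri}(K\cap\bar s^\perp)$.
\item For $x^\star\in X^\star$ near $\bar x$, $\langle x^\star,\bar s\rangle=0$ because solutions pair (Lemma~\ref{lem:kkt-solution-geometry}). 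Hence $x^\star\in K\cap\bar s^\perp$, and $x^\star$ lies near $\bar x$, which is in the relative interior of this face.
\item Therefore $x^\star$ lies in the relative interior of the same face, so $\mathcal F_{x^\star}=\mathcal F_{\bar x}$ and $\mathcal S^\star$ is fixed along the local stratum.
\end{itemize}
With a single pair $\{\mathcal D_Q,\mathcal F_{\bar x}\}$, the Step~1 construction at $\bar s$ gives one neighborhood $W_A$ of $\bar s$ and one constant $\kappa_A$. These work for every local center, which yields Assumption~\ref{ass:slack-regularity}.

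If the face-constancy argument fails for a general cone, the fallback is compactness. Cover a compact neighborhood of $\bar s$ in $\mathcal S^\star$ by the finitely many local balls produced in Step~1, and take the maximal modulus and a Lebesgue-number radius.
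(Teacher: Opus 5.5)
Your Step~1 is fine: Hoffman for the two subspaces plus a relative-interior ball is a hands-on version of the Bauschke--Borwein qualification the paper cites. Step~2, however, has a gap at the generality in which the corollary is stated.

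The corollary is for an arbitrary closed convex cone $K$. Your key step ``$\bar x\in\operatorname{ri}(K\cap\bar s^\perp)$'' is P-SC, and it follows from D-SC only when the minimal face of $K$ containing $\bar x$ is exposed. For cones with nonexposed faces the two conditions can differ; the exponential cone is a standard example. Without that step, cross-pairing with a D-SC slack of $\bar x$ gives only $\mathcal F_{\bar x}\subseteq\mathcal F_{x^\star}$, not equality.

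The compactness fallback does not close this. Your Step~1 constants depend on the center through $L=\operatorname{span}\mathcal F_{x^\star}$ and through how deep the slack sits in $\operatorname{ri}\mathcal F_{x^\star}$. A finite cover of a piece of $\mathcal S^\star$ in $s$-space gives no control over how these constants vary with $x^\star$, and without face constancy they need not stay bounded. Moreover, points of $\mathcal S^\star$ on the relative boundary of $\mathcal F_{x^\star}$ receive no Step~1 ball, so the cover need not exist at all.

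The missing idea is to use the hypothesis at \emph{every} local center and argue on the dual side. Suppose $s_i\in\operatorname{ri}\mathcal F_{x_i^\star}$ are D-SC slacks for two centers. Lemma~\ref{lem:kkt-solution-geometry} gives $s_1\in\mathcal F_{x_2^\star}$. Since $\mathcal F_{x_2^\star}=K^*\cap(x_2^\star)^\perp$ is a face of $K^*$ containing a relative-interior point of the convex set $\mathcal F_{x_1^\star}\subseteq K^*$, it contains all of $\mathcal F_{x_1^\star}$. The symmetric argument gives equality, so the face is frozen with no exposedness assumption. For facially exposed cones such as $\mathbb R^n_+$, Lorentz, and PSD cones, your route is valid and even needs D-SC only at $\bar x$.

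A second, smaller point concerns the neighborhood. Assumption~\ref{ass:slack-regularity} needs a neighborhood of the reference slack $\bar s=Q\bar x+c-A^*\bar y$, which need not itself be D-SC. Your direct construction only gives regularity near a relative-interior point of the face. You should instead use the \emph{bounded} linear regularity implied by $\mathcal D_Q\cap\operatorname{ri}\mathcal F\neq\emptyset$, as the paper does. Then the single fixed pair $\{\mathcal D_Q,\mathcal F\}$ is regular on any bounded neighborhood of $\bar s$.
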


We next impose the normal-cone calmness used in this projection estimate.

\begin{assumption}[uniform normal-cone calmness]
\label{ass:normal-calmness}
There are neighborhoods $V_B$ of $\bar z$ and $W_B$ of
$(\bar x,-\bar s)$, and a constant $\kappa_B>0$, such that, for every
$z^\star=(x^\star,y^\star)\in Z^\star\cap V_B$ and every
$(u,v)\in\operatorname{gph}N_K\cap W_B$,
\begin{equation}
\label{eq:normal-calmness}
    \operatorname{dist}(v,N_K(x^\star))
    \le
    \kappa_B\|u-x^\star\|.
\end{equation}
\end{assumption}

A standard pointwise sufficient condition is $C^2$-cone reducibility.  A
closed convex set $\Omega$ is $C^2$-cone reducible at $\bar x\in\Omega$ if,
locally, it has the representation
\[
    \Omega=\{x:\ \Xi(x)\in\mathcal C\},
\]
where $\Xi$ is twice continuously differentiable, $\Xi(\bar x)=0$,
$D\Xi(\bar x)$ is surjective, and $\mathcal C$ is a pointed closed convex cone.
If $K$ is $C^2$-cone reducible at $x^\star$, then $N_K$ is calm at every
normal graph point, including $(x^\star,-s^\star)$
\cite[Theorem~2.1]{liu2019graphical}.  This class includes polyhedral,
Lorentz, positive-semidefinite, and Ky Fan $k$-norm epigraph cones, and is
preserved by finite products.  For polyhedral cones, calmness also follows
directly from piecewise polyhedrality
\cite{robinson1981continuity}; for exponential and power cones it must be
verified separately unless a suitable reducibility result is available.

Table~\ref{tab:dual-regularity-verification} summarizes these pointwise
verification routes; rotated SOCs are included with Lorentz cones.

\begin{table}[H]
\centering
\small\setstretch{1.0}
\caption{Typical pointwise verification routes for the two dual regularity
conditions.}
\label{tab:dual-regularity-verification}
\renewcommand{\arraystretch}{1.12}
\begin{tabularx}{\textwidth}{>{\raggedright\arraybackslash}p{0.19\textwidth}
    >{\raggedright\arraybackslash}X
    >{\raggedright\arraybackslash}X}
\toprule
Cone class
& Slack-set linear regularity
& Normal-cone calmness \\
\midrule
Polyhedral cones, including $\mathbb R_+^n$
& Hoffman bound; often global.
& Polyhedral multifunction \cite{robinson1981continuity}. \\
Lorentz/SOC and rotated SOC
& D-SC:
  $s^\star\in\operatorname{ri}\mathcal F_{x^\star}$.
& $C^2$-cone reducibility \cite{liu2019graphical}. \\
Positive-semidefinite cone
& D-SC, equivalently the rank condition.
& $C^2$-cone reducibility \cite{liu2019graphical}. \\
Finite Cartesian products
& Blockwise D-SC.
& Reducibility preserved by products \cite{liu2019graphical}. \\
General closed convex cone
& Verify linear regularity directly.
& Verify calmness directly. \\
\bottomrule
\end{tabularx}
\end{table}

The table gives pointwise conditions.  If each local primal center admits a
D-SC optimal slack, Corollary~\ref{cor:dual-sc-slack-regularity} makes slack
regularity uniform.  The next proposition verifies uniform normal-cone
calmness in the same regime for the common cone classes.

\begin{proposition}[verification of uniform normal-cone calmness]
\label{prop:uniform-normal-calmness}
Let $K$ be a finite Cartesian product of nonnegative cones, second-order
cones (including rotated second-order cones), and positive-semidefinite
cones.  If every $x^\star\in X^\star\cap V_{X,\mathrm{SC}}$, for some
neighborhood $V_{X,\mathrm{SC}}$ of $\bar x$, admits a blockwise D-SC
optimal slack, then Assumption~\ref{ass:normal-calmness} holds.
\end{proposition}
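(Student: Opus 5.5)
Our plan has three steps: reduce the uniform statement to pointwise calmness, invoke $C^2$-cone reducibility for the cones in question, and then use a compactness argument to make the constants uniform. The substantive content is the uniformity over the local primal stratum $X^\star\cap V_{X,\mathrm{SC}}$. Pointwise calmness at each $(x^\star,-s^\star)$ already follows from $C^2$-cone reducibility of the nonnegative, SOC (rotated SOC) and PSD cones and of their finite products \cite[Theorem~2.1]{liu2019graphical}. Note that inequality \eqref{eq:normal-calmness} has $N_K(x^\star)$ on the right and the moving pair $(u,v)\in\operatorname{gph}N_K$ near $(\bar x,-\bar s)$ on the left. We therefore first reduce to the reference point: we show that, for $x^\star$ near $\bar x$ in the stratum, $N_K(x^\star)$ is locally the same as, or contains, $N_K(\bar x)$ near $-\bar s$. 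Then calmness at $(\bar x,-\bar s)$ transfers with a modest change of constants, since $\|u-\bar x\|\le\|u-x^\star\|+\|x^\star-\bar x\|$. The term $\|x^\star-\bar x\|$ is the one that must be absorbed.

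The key step is to exploit D-SC along the stratum blockwise. For each block $K_i$, D-SC of some optimal slack at $x^\star$ means $s_i\in\operatorname{ri}(K_i^*\cap(x_i^\star)^\perp)$. For symmetric cones this is equivalent to $x_i^\star+s_i\in\operatorname{int}K_i$. Together with Lemma~\ref{lem:kkt-solution-geometry} (every optimal slack pairs with every primal solution, $\mathcal S^\star$ is independent of $x^\star$), this forces the minimal face of $K_i$ containing $x_i^\star$ to be locally constant along $X^\star\cap V_{X,\mathrm{SC}}$. We plan to derive this by a case split by cone type.
\begin{itemize}
\item For $\mathbb R_+$ blocks, the zero pattern of $x^\star$ is locally constant, because strictly positive slack coordinates force zero primal coordinates and D-SC forbids other zeros near $\bar x$.
\item For SOC blocks, the block is either $0$, on the boundary ray, or interior. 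A fixed nonzero boundary slack pins $x^\star_i$ to the ray orthogonal to it, so the face and the normal cone $N_{K_i}(x_i^\star)$ are constant.
\item For PSD blocks, the rank of $x_i^\star$ is locally constant (the range is contained in $\ker s$, with complementary ranks at $\bar x$). Hence $N_{K_i}(x_i^\star)=\{-S\succeq0: \operatorname{range}S\subseteq\ker x_i^\star\}$ varies smoothly with the eigenspace.
\end{itemize}

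Given constancy of the face rank, we obtain uniform calmness as follows. Use the explicit reduction maps, namely the eigenvalue/eigenspace reduction for PSD and the constant face for $\mathbb R_+$ and SOC. These are $C^2$ and have surjective derivative uniformly in a neighborhood, so the calmness modulus from \cite{liu2019graphical} depends continuously on $x^\star$. Alternatively, estimate directly: for $(u,v)\in\operatorname{gph}N_{K_i}$ near $(\bar x_i,-\bar s_i)$, $v$ is supported on the complement of the range of $u$. Davis--Kahan type perturbation then gives $\operatorname{dist}(v,N_{K_i}(x_i^\star))\le C\|v\|\,\|u-x^\star_i\|/\mathrm{gap}$, where the gap is the smallest positive eigenvalue of $x_i^\star$. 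This gap is bounded below near $\bar x$ by the rank constancy. We then take $\kappa_B$ as the maximum over blocks, and shrink $V_B,W_B$ so that all bounds hold simultaneously.

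The main obstacle we expect is the PSD (and SOC) uniformity. The calmness modulus degenerates as the smallest nonzero eigenvalue of $x^\star$ tends to zero, so we must extract from D-SC a uniform eigen-gap along the stratum. Our first attempt is to combine rank constancy (lower semicontinuity of rank plus the upper bound $\operatorname{rank}x^\star\le n-\operatorname{rank}s$ from a fixed D-SC slack) with continuity of eigenvalues near $\bar x$.
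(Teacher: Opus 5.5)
The reduction you lead with does not work. Grant that $N_K(x^\star)=N_K(\bar x)$ along the stratum. Cross-complementarity with D-SC slacks does give this; for PSD blocks your own rank count pins $\operatorname{range}x_i^\star$ to the kernel of a fixed D-SC slack, so the normal cone is constant, not merely smoothly varying. Even so, calmness at $(\bar x,-\bar s)$ only yields $\operatorname{dist}(v,N_K(x^\star))\le\kappa\bigl(\|u-x^\star\|+\|x^\star-\bar x\|\bigr)$, and the offset cannot be absorbed. As $u\to x^\star\neq\bar x$, the required right-hand side $\kappa_B\|u-x^\star\|$ tends to zero, while yours stays of order $\|x^\star-\bar x\|$. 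A compactness cover by pointwise calmness neighborhoods hits the same offset between each cover center and the nearby KKT centers. The claim that the modulus from \cite{liu2019graphical} depends continuously on $x^\star$ is also not provided by that pointwise result; it would need a quantitative reproof. So the first and third steps of your plan do not produce a uniform $\kappa_B$.

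What does close the argument is the direct estimate you list as an alternative, and that is essentially the paper's proof. The paper uses the cross-complementarity argument of Corollary~\ref{cor:dual-sc-slack-regularity} to show that blockwise D-SC fixes the complementary face $\mathcal F_j$ along the stratum. Hence $N_{K_j}(x_j^\star)=-\mathcal F_j$ does not move, and the smallest positive eigenvalue of a boundary PSD block stays above some $\lambda_j>0$. It then splits $v_j$ along $\operatorname{range}(x_j^\star)\oplus\ker(x_j^\star)$ and uses the first block row of $u_jv_j=0$ with $\|(u_j^{11})^{-1}\|\le 2/\lambda_j$. That is your Davis--Kahan bound written without spectral projectors.

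When you write it up, drop the reference-point reduction and the compactness step. Note that $\|v\|$ is bounded on $W_B$ and that $\|u-x^\star\|\le\lambda_j/2$ after shrinking $V_B$ and $W_B$. Treat the SOC block explicitly: for $u$ on the boundary away from the vertex, $v$ lies on the ray through the reflection of $u$, so $\operatorname{dist}(v,N_{K_j}(x_j^\star))\le 2\|v\|\,\|u-x_j^\star\|/\|x_j^\star\|$. Here $\|x_j^\star\|$ is bounded below because the fixed face pins a nonzero boundary ray. Interior, zero, and $\mathbb R_+$ blocks are trivial once the face is fixed.
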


Without the constant-face structure provided by D-SC, the calmness modulus
can blow up as centers approach a smaller face, so the stratification is
what makes uniformity available.  We now combine the two properties.

\begin{proposition}[verification of the uniform local dual error bound]
\label{prop:dual-eb}
Under Assumptions~\ref{ass:slack-regularity} and
\ref{ass:normal-calmness}, the uniform local dual error bound in
Definition~\ref{def:dual-eb} holds with constant
\[
    \kappa_D
    =
    \|(A^*)^\dagger\|\,\kappa_A
    \frac{L_\xi+\kappa_B}{\sqrt{\lambda_\xi}},
\]
where $L_\xi:=\|M_\xi\| = \|Q\| + \xi$, $\lambda_\xi:=\lambda_{\min}(M_\xi) \geq \xi$, and the
pseudoinverse norm is restricted to $\operatorname{range}A^*$.
\end{proposition}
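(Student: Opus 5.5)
The proof follows the chain already sketched before the two assumptions. We fix a KKT center $z^\star=(x^\star,y^\star)\in Z^\star\cap V_D$ and a dual point $y\in V_y$, where $V_D\subseteq V_A\cap V_B$ and $V_y$ are small enough for all local neighborhoods to apply. We write $s:=s^\star(y)=Qx^\star+c-A^*y$, take $u_y,r_y$ as in Definition~\ref{def:dual-eb}, and set $v_y:=M_\xi r_y-s$.

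\emph{Step 1 (projection optimality).} The weighted projection $u_y=\Pi_K^{M_\xi}(x^\star-M_\xi^{-1}s)$ is characterized by $M_\xi(x^\star-M_\xi^{-1}s-u_y)\in N_K(u_y)$, that is, $v_y=M_\xi r_y-s\in N_K(u_y)$. Hence $(u_y,v_y)\in\operatorname{gph}N_K$.

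\emph{Step 2 (locality).} We check that $(u_y,v_y)\in W_B$ for $y$ near $\bar y$ and $z^\star$ near $\bar z$. The projection is continuous in its argument. At $y=y^\star$ the projected point is $x^\star-M_\xi^{-1}s^\star$, and since $-s^\star\in N_K(x^\star)$ this projects to $x^\star$. So $u_y\to\bar x$ and $v_y\to-\bar s$ as $(y,z^\star)\to(\bar y,\bar z)$, and shrinking $V_y$ and $V_D$ places $(u_y,v_y)$ in $W_B$ and $s$ in $W_A$.

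\emph{Step 3 (normal-cone calmness).} Assumption~\ref{ass:normal-calmness} gives
\[
\operatorname{dist}(v_y,N_K(x^\star))\le\kappa_B\|u_y-x^\star\|=\kappa_B\|r_y\|.
\]
For a cone, $N_K(x^\star)=-(K^*\cap(x^\star)^\perp)=-\mathcal F_{x^\star}$. Therefore
\[
\operatorname{dist}(s,\mathcal F_{x^\star})=\operatorname{dist}(-s,N_K(x^\star))\le\|v_y-(-s)\|+\operatorname{dist}(v_y,N_K(x^\star))\le\|M_\xi r_y\|+\kappa_B\|r_y\|\le(L_\xi+\kappa_B)\|r_y\|.
\]

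\emph{Step 4 (slack regularity).} Since $s\in\mathcal D_Q$, Assumption~\ref{ass:slack-regularity} together with Lemma~\ref{lem:kkt-solution-geometry} ($\mathcal S^\star=\mathcal D_Q\cap\mathcal F_{x^\star}$) gives
\[
\operatorname{dist}(s,\mathcal S^\star)\le\kappa_A(L_\xi+\kappa_B)\|r_y\|.
\]

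\emph{Step 5 (lift to multipliers).} Let $s'\in\mathcal S^\star$ be the nearest optimal slack. It has the form $s'=Qx^\star+c-A^*y'$ with $y'\in Y^\star$: by Lemma~\ref{lem:kkt-solution-geometry}, $x^\star$ pairs with every $y'$ whose slack is in $\mathcal S^\star$, and every such $y'$ is optimal. Then $s-s'=A^*(y'-y)\in\operatorname{range}A^*$. Choosing $y'':=y+(A^*)^\dagger(s'-s)$ gives $A^*y''=A^*y'$, so $y''\in Y^\star$, because $Y^\star$ is invariant under adding $\ker A^*$ (the KKT conditions depend on $y$ only through $A^*y$). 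Hence
\[
\operatorname{dist}(y,Y^\star)\le\|(A^*)^\dagger\|\operatorname{dist}(s,\mathcal S^\star).
\]

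\emph{Step 6 (change of norm).} Finally $\|r_y\|\le\lambda_\xi^{-1/2}\|r_y\|_{M_\xi}$, and combining with Steps 4 and 5 yields the stated $\kappa_D$.

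The main obstacle is Step 2: showing that the graph point $(u_y,v_y)$ and the slack $s$ stay inside the fixed neighborhoods $W_B$ and $W_A$ uniformly over centers $z^\star$ in $V_D$. I would handle this with the Lipschitz continuity of $\Pi_K^{M_\xi}$ in the $M_\xi$-norm, which gives $\|u_y-\bar x\|$ and $\|v_y+\bar s\|$ bounded by a constant times $\|y-\bar y\|+\|z^\star-\bar z\|$, and then shrink the neighborhoods accordingly. A secondary point to verify is the identity $N_K(x^\star)=-\mathcal F_{x^\star}$, which holds for closed convex cones.
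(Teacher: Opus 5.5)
Your proposal is correct and follows the paper's proof essentially step for step. Projection optimality gives $v_y\in N_K(u_y)$. Calmness together with $v_y+s^\star(y)=M_\xi r_y$ bounds $\operatorname{dist}(s^\star(y),\mathcal F_{x^\star})$. Slack regularity, using $s^\star(y)\in\mathcal D_Q$, passes this to $\mathcal S^\star$, and a pseudoinverse correction lifts it to $Y^\star$. As in the paper, the neighborhoods are shrunk via continuity of $(z^\star,y)\mapsto(u_y,v_y,s^\star(y))$.

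There is one sign slip in Step 5: you need $y'':=y+(A^*)^\dagger(s-s')$. With your choice, $A^*y''=2A^*y-A^*y'$, so $y''$ need not lie in $Y^\star$. With the corrected sign, the norm estimate $\|y''-y\|\le\|(A^*)^\dagger\|\operatorname{dist}(s,\mathcal S^\star)$ is unchanged.
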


\subsection{Quadratic growth for common-cone QPs under strict complementarity}

The preceding verification routes yield the following direct consequence
for the cone classes most commonly used in conic optimization.  The
uniformity clause is stated explicitly because the corresponding result at
a single strictly complementary KKT point is only pointwise.

\begin{corollary}[common-cone conic QPs]
\label{cor:common-cone-qg}
Fix $\xi>0$, and suppose that $K$ is a finite Cartesian product of
nonnegative cones, Lorentz cones (including rotated Lorentz cones), and
positive-semidefinite cones.  Let $\bar z\in Z^\star$, and assume strict
complementarity along the local KKT stratum: there is a neighborhood
$V_{\mathrm{SC}}$ of $\bar z$ such that every
$z^\star=(x^\star,y^\star)\in Z^\star\cap V_{\mathrm{SC}}$ satisfies
$x^\star+s^\star\in\operatorname{int}K$.
Then,
there are a neighborhood $V$ of $\bar z$ and a constant $\alpha_\xi>0$ such
that
\[
    G_\xi(z;z^\star)
    \ge
    \alpha_\xi\operatorname{dist}^2(z,Z^\star)
\]
for all $z\in(K\times Y)\cap V$ and
$z^\star\in Z^\star\cap V$. With the constants $\kappa_L$ and $\gamma_L$
from Assumption~\ref{ass:lifted-residual-eb}, $\kappa_A$ from Assumption~\ref{ass:slack-regularity}, and $\kappa_B$ from Assumption~\ref{ass:normal-calmness}, one may take
\begin{equation}
\label{eq:common-cone-qg-constant}
\begin{aligned}
    \alpha_\xi
    &=
    \min\left\{
        \frac{1}{2\kappa_L^2},
        \frac{1}{4\xi\gamma_L^2},
        \frac{\xi}
        {2\|(A^*)^\dagger\|^2\kappa_A^2(\|Q\| + \xi+\kappa_B)^2}
    \right\}.
\end{aligned}
\end{equation}
\end{corollary}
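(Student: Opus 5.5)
The plan is to assemble the corollary from the verification results already stated and then feed them into Theorem~\ref{thm:qg}. For the cone classes allowed here (nonnegative, Lorentz and rotated Lorentz, PSD), $K$ and $K^*$ are symmetric, hence facially exposed. So, as noted after the definitions of P-SC and D-SC, the condition $x^\star+s^\star\in\operatorname{int}K$ is equivalent blockwise to both P-SC and D-SC at every $z^\star\in Z^\star\cap V_{\mathrm{SC}}$. In particular, every primal center $x^\star$ in a neighborhood $V_{X,\mathrm{SC}}$ of $\bar x$ admits a D-SC optimal slack. This requires shrinking $V_{\mathrm{SC}}$ to a product neighborhood, which is legitimate because $Z^\star=X^\star\times Y^\star$ by Lemma~\ref{lem:kkt-solution-geometry}: one pairs each such $x^\star$ with $\bar y$, and $(x^\star,\bar y)$ stays close to $\bar z$.

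First, Proposition~\ref{prop:lifted-eb} gives Assumption~\ref{ass:lifted-residual-eb}, and then Corollary~\ref{cor:primal-eb-strict-complementarity} yields the uniform primal error bound with $a_P=2\kappa_L^2$ and $b_P=2\gamma_L^2$. Second, Corollary~\ref{cor:dual-sc-slack-regularity} gives Assumption~\ref{ass:slack-regularity}, and Proposition~\ref{prop:uniform-normal-calmness} gives Assumption~\ref{ass:normal-calmness}. Proposition~\ref{prop:dual-eb} then supplies the dual error bound with $\kappa_D=\|(A^*)^\dagger\|\kappa_A(L_\xi+\kappa_B)/\sqrt{\lambda_\xi}$. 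Theorem~\ref{thm:qg} now gives existence of $V$ and $\alpha_\xi$, with $V$ taken as the intersection of all the neighborhoods involved.

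To obtain the explicit constant, I would not use Theorem~\ref{thm:qg} as a black box. Instead I would use the quantitative lower bounds from its sufficiency proof, via the decomposition $G_\xi=P_\xi+D_\xi$.

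For the primal part, I expect to show
\[
P_\xi(x;z^\star)\ \ge\ \min\{E_P(x;z^\star),\ \tfrac{1}{2\xi}\|Ax-b\|^2\}
\]
up to constants. The linear term $\langle s^\star,x\rangle$ and the curvature term come from choosing $\widehat x=x^\star$, and the feasibility term comes from maximizing over $\widehat y$ the concave function $\langle \widehat y-y^\star,b-Ax\rangle-\frac\xi2\|\widehat y-y^\star\|^2$, which gives $\frac1{2\xi}\|Ax-b\|^2$. One has to combine these carefully, e.g. $P_\xi\ge E_P+\frac{1}{2\xi}\|Ax-b\|^2$ if the maximizations decouple. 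Then
\[
\operatorname{dist}^2(x,X^\star)\le 2\kappa_L^2E_P+2\gamma_L^2\|Ax-b\|^2\le \max\{2\kappa_L^2,4\xi\gamma_L^2\}\,P_\xi,
\]
which produces the first two entries of the minimum.

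For the dual part, the maximization over $\widehat x\in K$ of $\langle s^\star(y),x^\star-\widehat x\rangle-\frac\xi2\|\widehat x-x^\star\|^2$, together with the $Q$-curvature of $\mathcal L$ in $\widehat x$, is a weighted projection. I expect it to equal at least $\frac12\|r_y\|_{M_\xi}^2$, by the variational inequality for $u_y$. Then $\operatorname{dist}^2(y,Y^\star)\le\kappa_D^2\|r_y\|^2_{M_\xi}$, and using $\lambda_\xi\ge\xi$ and $L_\xi=\|Q\|+\xi$ gives the third entry.

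The main obstacle is this bookkeeping of the constants. It requires exact identities for $P_\xi$ and $D_\xi$ from the proof of Theorem~\ref{thm:qg}, namely whether the primal maximizer yields $E_P$ plus $\frac{1}{2\xi}\|Ax-b\|^2$ exactly and whether the dual part gives exactly $\frac12\|r_y\|_{M_\xi}^2$. If those identities hold as expected, then taking the minimum of the primal and dual moduli and summing via $\operatorname{dist}^2(z,Z^\star)=\operatorname{dist}^2(x,X^\star)+\operatorname{dist}^2(y,Y^\star)$ yields \eqref{eq:common-cone-qg-constant}.
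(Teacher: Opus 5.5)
Your proposal is correct and takes essentially the same route as the paper. It uses the same ingredients:
\begin{itemize}
\item the symmetric-cone equivalence of P-SC and D-SC with $x^\star+s^\star\in\operatorname{int}K$;
\item pairing nearby primal centers with $\bar y$ via Lemma~\ref{lem:kkt-solution-geometry};
\item Proposition~\ref{prop:lifted-eb} $\to$ Corollary~\ref{cor:primal-eb-strict-complementarity} for the primal bound;
\item Corollary~\ref{cor:dual-sc-slack-regularity} and Proposition~\ref{prop:uniform-normal-calmness} $\to$ Proposition~\ref{prop:dual-eb} for the dual bound;
\item the explicit modulus $\min\{1/a_P,\,1/(2\xi b_P),\,1/(2\kappa_D^2)\}$ from the sufficiency half of Theorem~\ref{thm:qg}.
\end{itemize}

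The identities you hedge on hold as you expect. The proof of Theorem~\ref{thm:qg} gives $P_\xi=E_P+\frac{1}{2\xi}\|Ax-b\|^2$ exactly, and the lower bound you need already follows from the single test point $\widehat x=x^\star$, $\widehat y=y^\star+(b-Ax)/\xi$, with no decoupling argument required. It also gives $D_\xi\ge\frac12\|r_y\|_{M_\xi}^2$, and $\lambda_\xi\ge\xi$ turns $1/(2\kappa_D^2)$ into the third entry of the stated constant.
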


\ifdefined\AVERAGEDPDHG
Theorem~\ref{thm:qg} verifies Assumption~\ref{ass:smoothed-qg}; it is not a
separate convergence argument.  The exact averaged PDHG output lies in
$K\times Y$ because every exact primal iterate lies in $K$ and $K$ is convex.
Therefore the theorem applies directly to the epoch average whenever it lies
in the stated neighborhood.
\else
Theorem~\ref{thm:qg} establishes a smoothed-gap quadratic-growth property; it
is not, by itself, a proof of Assumption~\ref{ass:fp-sharpness}.  In
particular, reflected-Halpern states may lie outside $K\times Y$, whereas the
growth estimate above is stated on $K\times Y$, and its merit function is not
the fixed-point residual $R_n$.  When the same conic regularity also yields
the KKT metric-subregularity estimate \eqref{eq:kkt-msr}, the resolvent
identity \eqref{eq:pdhg-residual-kkt} verifies fixed-point sharpness and the
local convergence theorems of Section~\ref{sec:rh-restart-theory} apply.
\fi

\section{Practical Implementation}
\label{sec:implementation}

This section describes the numerical components used to implement and
accelerate the PDHG iteration of Algorithm~\ref{alg:pdhcq1}.  We first discuss the
primal proximal problem, then present the reflected-Halpern scheme, problem
rescaling, inner-solver enhancements, adaptive restart, and the primal-weight
update.  Finally, we describe the distributed implementation for multi-GPU
acceleration.

The implementation accepts problems in the following standard form:
\begin{equation}
\label{eq:solver-input}
\begin{aligned}
    \min_{x\in\mathbb R^n}\quad
    &\frac12x^\top Qx+c^\top x\\
    \mathrm{s.t.}\quad
    &l^a\le Ax\le u^a,\\
    &x_{\mathcal I_0}\in
      \mathcal X_0:=[l^x,u^x],\\
    &x_{\mathcal I_j}\in\mathcal K_j,
      \qquad j=1,\ldots,J,
\end{aligned}
\end{equation}
Here $A\in\mathbb R^{m\times n}$, $Q\in\mathbb S_+^n$,
$c\in\mathbb R^n$, and
$\mathcal X:=\mathcal X_0\times\prod_{j=1}^J\mathcal K_j$.  Let
$\overline{\mathbb R}:=\mathbb R\cup\{-\infty,+\infty\}$.  The affine
bounds satisfy $l^a,u^a\in\overline{\mathbb R}^m$ with $l^a\le u^a$, and
the box bounds satisfy
$l^x,u^x\in\overline{\mathbb R}^{|\mathcal I_0|}$ with $l^x\le u^x$.
Thus equal finite endpoints represent equalities, while infinite endpoints
represent one-sided or absent bounds.  The index sets
$\{\mathcal I_j\}_{j=0}^J$ form a partition of $\{1,\ldots,n\}$, so every
primal coordinate belongs to exactly one block.

The implementation supports nonnegative blocks and the vector cones
\begin{align*}
    \mathcal K_{\mathrm{soc}}
    &:=\{(v,t):\|v\|_2\le t\},\\
    \mathcal K_{\mathrm{rsoc}}
    &:=\{(v,s,t):\|v\|_2^2\le2st,\ s,t\ge0\},\\
    \mathcal K_{\mathrm{exp}}
    &:=\operatorname{cl}\{(r,s,t):s>0,\ s\exp(r/s)\le t\},\\
    \mathcal K_{\mathrm{pow}}^\alpha
    &:=\{(r,s,t):r,s\ge0,\ r^\alpha s^{1-\alpha}\ge|t|\},
    \qquad \alpha\in(0,1).
\end{align*}
PSD cone blocks are excluded because every projection would require an
eigenvalue decomposition.

For a closed convex set $\mathcal C$ and a positive diagonal matrix
$M\succ0$, define the projection in the $M$-norm by
\begin{equation}
\label{eq:metric-block-projection}
    \Pi_{\mathcal C}^{M}(r)
    :=
    \arg\min_{u\in\mathcal C}\frac12\|u-r\|_M^2,
    \qquad
    \|z\|_M^2:=\langle z,Mz\rangle.
\end{equation}
We write $\Pi_{\mathcal C}:=\Pi_{\mathcal C}^{I}$ for the Euclidean
projection. Because $\mathcal X$ is a Cartesian product and $M$ is diagonal,
$\Pi_{\mathcal X}^M$ decomposes into independent box and cone projections.
Table~\ref{tab:cone-projection-methods} summarizes the projection methods for
the box and cone blocks, and Appendix~\ref{app:cone-projections} gives the
corresponding formulas under the Euclidean norm and the $M$-norm.

\subsection{Primal Proximal Problem Solving}
\label{sec:conic}
\label{sec:cone-projections}

Each PDHG iteration requires solving the following strongly convex primal
proximal subproblem, where $\tau_n$ is the current stepsize and
$v:=x+\tau_nA^*y$:
\begin{equation}
\label{eq:prox-subproblem}
    p^+
    =
    \arg\min_{u\in\mathcal X}
    \left\{
        \frac12\langle u,Qu\rangle+\langle c,u\rangle
        +\frac{1}{2\tau_n}\|u-v\|^2
    \right\},
\end{equation}
whose smooth Hessian $Q+\tau_n^{-1}I\succ0$ makes the solution unique even
for singular $Q$.  The solution method depends on whether $Q$ is diagonal or
a general positive-semidefinite operator.

\paragraph{General Quadratic Objective.}

For a general matrix-free $Q$, we solve \eqref{eq:prox-subproblem} by a
projected Barzilai-Borwein method \cite{dai2005projected}. Let
\begin{equation}
\label{eq:bb-inner-gradient}
    g^j
    :=Qu^j+c+\tau_n^{-1}(u^j-v).
\end{equation}
For $j\ge1$, define $s_j:=u^j-u^{j-1}$ and
$\delta g_j:=g^j-g^{j-1}$. The BB1 stepsize and projected update are
\begin{equation}
\label{eq:bb-step}
    \alpha_j^{\mathrm{BB}}
    :=\frac{\|s_j\|_2^2}{\langle s_j,\delta g_j\rangle},
    \qquad
    u^{j+1}
    =\Pi_{\mathcal X}\left(u^j-\alpha_j^{\mathrm{BB}}g^j\right).
\end{equation}
Each inner iteration requires one application of $Q$ and one product-set
projection. Products with a structured operator may be evaluated as
$Q=P+R^\top D R$, where $D$ is symmetric and $Q\succeq0$, without
materializing $Q$.

\paragraph{Diagonal Quadratic Objective.}

Suppose $Q=\operatorname{diag}(q)$ with $q_i\ge0$. More generally, let
$M_{\mathrm{sc}}\succ0$ denote the positive diagonal matrix induced by the
cone scaling described in Section~\ref{sec:problem-rescaling}; its cone-block
components are defined in \eqref{eq:scaling-induced-metric}.
When no cone scaling is applied, $M_{\mathrm{sc}}=I$. The resulting projection
metric and center are
\begin{equation}
\label{eq:diagonal-q-metric}
    M_n:=M_{\mathrm{sc}}+\tau_nQ,
    \qquad
    \widetilde x
    :=M_n^{-1}\bigl(M_{\mathrm{sc}}x-\tau_n(c-A^*y)\bigr).
\end{equation}
The proximal subproblem is therefore the single metric projection
\begin{equation}
\label{eq:diagonal-q-metric-projection}
    p^+=\Pi_{\mathcal X}^{M_n}(\widetilde x).
\end{equation}
Thus diagonal quadratic curvature and diagonal cone scaling are handled
by the same projection definition \eqref{eq:metric-block-projection}; no inner
projected-gradient iteration is required.

\subsection{Algorithm Enhancement}
\label{sec:algorithm-enhancement}

The following enhancements improve practical solution efficiency through
reflected-Halpern acceleration, problem rescaling, reduced inner work, timely
restarts, and primal-dual balance.

\subsubsection{Reflected-Halpern Acceleration}
\label{sec:rh-pdhg}

The practical solver applies an anchored reflected-Halpern acceleration to
the PDHG operator. This acceleration has demonstrated practical effectiveness
in cuPDLPx \cite{lu2025cupdlpx}; its convergence and nonergodic $O(1/k)$
residual bounds in a more general setting are established in
\cite{liu2026reflected}. During epoch $n$, let $\mathcal T_n$ denote the PDHG
update operator with fixed
$\tau_n,\sigma_n$, set the anchor $a^n=z^{n,0}$, and store the candidate
\begin{equation}
\label{eq:stored-pdhg-candidate}
    w^{n,k+1}:=\mathcal T_n(z^{n,k}).
\end{equation}
For a reflection coefficient $\rho\in[1/2,1]$ and $k=0,\ldots,N_n-2$, the
anchored update is
\begin{equation}
\label{eq:rh-update}
    z^{n,k+1}
    =
    \frac{k+1}{k+2}
    \bigl(2\rho w^{n,k+1}+(1-2\rho)z^{n,k}\bigr)
    +
    \frac{1}{k+2}a^n .
\end{equation}
The choice $\rho=1/2$ gives standard Halpern iteration on $\mathcal T_n$,
whereas $\rho=1$ gives Halpern iteration on the reflected operator
$2\mathcal T_n-I$.  A reflected-Halpern state need not
have a primal component in the cone.  Consequently, KKT residuals are
evaluated at the stored PDHG candidates, and an accepted epoch restarts from
the final candidate
\begin{equation}
\label{eq:restart-output}
    z^{n+1,0}
    :=
    w^{n,N_n}
    =
    \mathcal T_n(z^{n,N_n-1}),
\end{equation}
or from its inexact counterpart in the implementation.
Section~\ref{sec:rh-restart-theory} analyzes the restarted averaged scheme with
inexact primal proximal evaluations; the reflected-Halpern recursion is used
here as an implementation enhancement built on the same PDHG operator.

\subsubsection{Problem Rescaling}
\label{sec:problem-rescaling}

Following the preconditioned-instance convention of
cuPDLPx \cite{lu2025cupdlpx}, we combine $\ell_\infty$ Ruiz
rescaling \cite{ruiz2001scaling}, Pock-Chambolle
rescaling
\cite{pock2011diagonal}, and bound-objective rescaling. To describe
the accumulated diagonal transformation, let $S_x\succ0$ and $S_y\succ0$ be
the variable and constraint scalings and set $\widetilde x=S_xx$. The main
problem data become
\begin{equation}
\label{eq:problem-rescaling}
    \widetilde A=S_yAS_x^{-1},
    \qquad
    \widetilde Q=S_x^{-\top}QS_x^{-1},
    \qquad
    \widetilde c=S_x^{-\top}c,
\end{equation}
with all bounds transformed consistently. Bound-objective rescaling adds
positive scalar normalizations of the bound and objective data.

For primal cone blocks, we support two treatments of the variable scaling.
Let $\mathcal I_j$ index a cone block and, at the current rescaling stage, let
the coordinatewise Ruiz and Pock-Chambolle candidates be
\begin{equation}
\label{eq:cone-scaling-candidates}
    r_i
    :=\left(\max_{\ell}|\widetilde A_{\ell i}|\right)^{1/2},
    \qquad
    p_i
    :=\left(\sum_{\ell}|\widetilde A_{\ell i}|^{2-\alpha}\right)^{1/2}.
\end{equation}
The \emph{cone-preserving block scaling} mode replaces these candidates on
$\mathcal I_j$ by
\begin{equation}
\label{eq:cone-preserving-scaling}
    r_{\mathcal I_j}:=\max_{i\in\mathcal I_j}r_i,
    \qquad
    p_{\mathcal I_j}
    :=\left(\frac{1}{|\mathcal I_j|}
        \sum_{i\in\mathcal I_j}p_i^2\right)^{1/2},
\end{equation}
respectively. Thus Ruiz retains its blockwise $\ell_\infty$ safeguard, whereas
Pock-Chambolle aggregates the column energies by their root mean square. All
coordinates of a cone block receive the resulting common factor, so
$S_j=d_jI$ and $S_j\mathcal K_j=\mathcal K_j$. Nonconic variable scalings and
all constraint scalings remain coordinatewise.

Alternatively, the \emph{rescaled-cone} mode retains the candidates in
\eqref{eq:cone-scaling-candidates} coordinatewise. A cone block then becomes
$S_j\mathcal K_j$, where
$S_j=\operatorname{diag}((S_x)_{\mathcal I_j})$. This applies the Ruiz and
Pock-Chambolle scalings without blockwise aggregation and invokes the
rescaled-cone projections described by \cite{lin2025pdcs}. Equivalently, in
the original cone coordinates,
\begin{equation}
\label{eq:scaling-induced-metric}
    S_j^{-1}\Pi_{S_j\mathcal K_j}(S_jr)
    =\Pi_{\mathcal K_j}^{M_{\mathrm{sc},j}}(r),
    \qquad
    M_{\mathrm{sc},j}:=S_j^\top S_j.
\end{equation}
Constraint
scalings use the same coordinatewise rule in both modes. As in cuPDLPx,
termination criteria are evaluated after recovering the candidate for the
original, unpreconditioned problem, so rescaling does not affect the reported
accuracy.
The default rescaling settings are reported in
Table~\ref{tab:practical-defaults}.

\subsubsection{Adaptive Inner Accuracy}
\label{sec:adaptive-inner-accuracy}

For a general $Q$, let $\Delta u_{n,k}$ denote the final projected update of
the inner solve at outer step $(n,k)$. For $k\ge1$, the inner tolerance is
\begin{equation}
\label{eq:monotone-primal-inner-tol}
    \epsilon_{n,k}^{\mathrm{in}}
    =
    \min\left\{
        \epsilon_{n,k-1}^{\mathrm{in}},
        \max\left\{
            \gamma_{\mathrm{in}}
            \frac{\|x^{n,k}-x^{n,k-1}\|}{\tau_n},
            \epsilon_{\min}
        \right\}
    \right\},
\end{equation}
and the recorded stopping test is
\begin{equation}
\label{eq:current-inner-step-stop}
    \|\Delta u_{n,k}\|_2
    \le \epsilon_{n,k}^{\mathrm{in}}.
\end{equation}
By projection optimality, the final update supplies a stationarity residual
proportional to $\|\Delta u_{n,k}\|_2$, so
\eqref{eq:general-residual-to-error} converts this displacement into a bound
on the proximal error of the returned point.
The first tolerance of an epoch is inherited from the preceding epoch, with
$\epsilon_{0}^{\mathrm{in}}$ used at initialization.

\subsubsection{Jacobi Inner Preconditioning}
\label{sec:jacobi-inner-preconditioning}

For a general $Q$, we accelerate the Euclidean projected-BB method in
\eqref{eq:bb-step} with the Jacobi metric
\begin{equation}
\label{eq:jacobi-inner-metric}
    J_n
    :=\operatorname{diag}\!\left(Q+\tau_n^{-1}I\right)
    =\operatorname{diag}(Q)+\tau_n^{-1}I.
\end{equation}
The corresponding diagonally scaled projected-BB step is
\begin{equation}
\label{eq:jacobi-bb-step}
    \alpha_j^{\mathrm{JBB}}
    =
    \frac{\langle s_j,J_ns_j\rangle}
         {\langle s_j,\delta g_j\rangle},
    \qquad
    u^{j+1}
    =\Pi_{\mathcal X}^{J_n}
      \left(u^j-\alpha_j^{\mathrm{JBB}}J_n^{-1}g^j\right),
\end{equation}
where $g^j$ denotes the gradient of the smooth part of
\eqref{eq:prox-subproblem} at $u^j$, $s_j:=u^{j}-u^{j-1}$, and
$\delta g_j:=g^{j}-g^{j-1}$.  This is a scaled gradient-projection step with
a spectral steplength in the sense of \cite{bonettini2009scaled}.  Since
$J_n$ is diagonal, its inverse is applied elementwise, and the metric
projection is evaluated using the formulas in
Appendix~\ref{app:cone-projections}.

The Jacobi metric is used only to define the inner direction and projection;
the stopping test is restored to the unpreconditioned Euclidean coordinates.
Indeed, for
$\widehat{\Delta u}_{n,k}:=J_n^{1/2}\Delta u_{n,k}$,
\begin{equation}
\label{eq:jacobi-stop-unscale}
    \|\widehat{\Delta u}_{n,k}\|_2
    =\|\Delta u_{n,k}\|_{J_n},
    \qquad
    \|\Delta u_{n,k}\|_2
    =\|J_n^{-1/2}\widehat{\Delta u}_{n,k}\|_2.
\end{equation}
The second quantity is the one used in
\eqref{eq:current-inner-step-stop}.
Appendix~\ref{app:inner-accuracy-ablation} reports an ablation of the adaptive
inner-accuracy rule and Jacobi inner preconditioning.

\subsubsection{Adaptive Stepsize and Restart}

Following cuPDLPx \cite{lu2025cupdlpx}, we define the primal and dual
stepsizes using a step size $\eta$ and a primal weight $\omega_n$:
\begin{equation}
\label{eq:practical-stepsizes}
    \tau_n=\frac{\eta}{\omega_n},
    \qquad
    \sigma_n=\eta\omega_n,
    \qquad
    \eta:=\frac{0.998}{\|A\|_2}.
\end{equation}
The primal weight $\omega_n$ is updated when a restart is triggered.

\paragraph{Primal Weight Update.}
For the accepted endpoint $w^{n,N_n}$, the proportional-integral-derivative
(PID) update uses the movement ratio and exponentially discounted integral
\begin{equation}
\label{eq:pid-error}
    e_n
    =
    \log\left(
        \frac{\|w_y^{n,N_n}-y^{n,0}\|}
        {\omega_n\|w_x^{n,N_n}-x^{n,0}\|}
    \right),
    \qquad
    I_n=\beta_I I_{n-1}+e_n.
\end{equation}
We update the primal weight by
\begin{equation}
\label{eq:pid-weight}
    \log\omega_{n+1}
    =
    \log\omega_n
    +K_Pe_n+K_I I_n+K_D(e_n-e_{n-1}).
\end{equation}

\paragraph{Adaptive Restart.}
For the stored PDHG candidate, define
\[
    d_x^{n,k}:=x^{n,k}-w_x^{n,k+1},
    \qquad
    d_y^{n,k}:=y^{n,k}-w_y^{n,k+1}.
\]
The fixed-point error is
\begin{equation}
\label{eq:practical-fixed-point-movement}
    E_{n,k}^2
    =
    \omega_n\|d_x^{n,k}\|^2
    +\omega_n^{-1}\|d_y^{n,k}\|^2
    +2\eta\langle A d_x^{n,k},d_y^{n,k}\rangle.
\end{equation}
Using our thresholds, a restart occurs under sufficient decay,
necessary decay with no local progress, or an artificial iteration limit:
\begin{equation}
\label{eq:practical-restart-rule}
\begin{aligned}
    E_{n,k}&\le\beta_{\mathrm{s}}E_{n,0},
    \quad\text{or}\\
    E_{n,k}&\le\beta_{\mathrm{n}}E_{n,0}
        \ \text{ and }\ E_{n,k}>E_{n,k-1},
    \quad\text{or}\\
    k&\ge\beta_{\mathrm{a}}K_{\mathrm{tot}},
\end{aligned}
\end{equation}
where $K_{\mathrm{tot}}$ is the cumulative number of outer iterations.
Table~\ref{tab:practical-defaults} gives the default constants.

\begin{table}[t]
    \centering
    \papertablesetup
    \caption{Default parameters for problem rescaling and algorithm
    enhancements.}
    \label{tab:practical-defaults}
    \medskip
    \begin{tabular}{lll}
        \toprule
        Component & Parameters & Default values \\
        \midrule
        Ruiz rescaling
        & Iterations
        & $10$ \\
        Pock-Chambolle rescaling
        & $\alpha$
        & $1.00$ \\
        Cone scaling
        & Mode
        & Cone-preserving \\
        Bound-objective rescaling
        & Status
        & Enabled \\
        \midrule
        Inner accuracy
        & $(\epsilon_{0}^{\mathrm{in}},\gamma_{\mathrm{in}},\epsilon_{\min})$
        & $(10^{-3},5\!\times\!10^{-4},10^{-9})$ \\
        Jacobi preconditioning
        & Status
        & Enabled \\
        Reflected-Halpern acceleration
        & $\rho$
        & $1.00$ \\
        Restart
        & $(\beta_{\mathrm{s}},\beta_{\mathrm{n}},\beta_{\mathrm{a}})$
        & $(0.20,0.80,0.36)$ \\
        Primal weight
        & $(\beta_I,K_P,K_I,K_D)$
        & $(0.30,0.99,0.01,0.00)$ \\
        \bottomrule
    \end{tabular}
\end{table}

\subsection{Multi-GPU Implementation}
\label{sec:distributed}

\name distributes the products with $A$, $A^\top$, and $Q$
across multiple GPUs. The constraint operator uses the two-dimensional
partition of D-PDLP \cite{li2026dpdlp}, while the quadratic operator follows
the same primal-vector partition. This increases aggregate memory capacity
without fully replicating the primal and dual vectors.

\subsubsection{Distributed Storage}

The constraint matrix $A$ is sharded into a $P_r \times P_c$ grid of blocks.
Writing
\[
    A=
    \begin{bmatrix}
        A_{11}&\cdots&A_{1P_c}\\
        \vdots&&\vdots\\
        A_{P_r1}&\cdots&A_{P_rP_c}
    \end{bmatrix},
    \qquad
    x=(x_1,\ldots,x_{P_c}),
    \qquad
    y=(y_1,\ldots,y_{P_r}),
\]
rank $(i,j)$ stores $A_{ij}$, the corresponding slices $x_j$ and $y_i$, and
their local work vectors. Thus $x_j$ is shared only within process column $j$
and $y_i$ only within process row $i$.

The quadratic operator uses the same partition
$x=(x_1,\ldots,x_{P_c})$. For the structured representation
$Q=P+R^\top D R$, write
\begin{equation}
\label{eq:distributed-q-storage}
    P=\begin{bmatrix}P_{:1}&\cdots&P_{:P_c}\end{bmatrix},
    \qquad
    R=\begin{bmatrix}R_1&\cdots&R_{P_c}\end{bmatrix},
\end{equation}
where $P_{:j}$ and $R_j$ contain the columns multiplying $x_j$. Rank $(i,j)$
stores these two column shards; because they depend only on $j$, they are
replicated within process column $j$. The middle matrix $D$ and the
rank-dimensional work vectors are replicated. For diagonal $Q$, rank $(i,j)$
stores only the local diagonal block $q_j$.

\subsubsection{Distributed Computation}

The two constraint products are assembled from local sparse products as
\begin{equation}
\label{eq:distributed-products}
    (Ax)_i=\sum_{j=1}^{P_c}A_{ij}x_j,
    \qquad
    (A^\top y)_j=\sum_{i=1}^{P_r}A_{ij}^\top y_i.
\end{equation}
The first sum is reduced across each process row and the second across each
process column. For the partition in
\eqref{eq:distributed-q-storage}, the quadratic product is assembled as
\begin{equation}
\label{eq:distributed-q-product}
\begin{aligned}
    p&:=\sum_{\ell=1}^{P_c}P_{:\ell}x_\ell,
    &h&:=\sum_{\ell=1}^{P_c}R_\ell x_\ell,\\
    (Qx)_j&=p_j+R_j^\top D h,
    &&j=1,\ldots,P_c.
\end{aligned}
\end{equation}
The sums defining $p$ and $h$ are reduced across each process row; every rank
then extracts the $j$th primal slice $p_j$ and applies $R_j^\top$. The sparse and
low-rank terms may be used independently by setting the other term to zero.
For diagonal $Q$, the product is local,
$(Qx)_j=q_j\odot x_j$. Global scalar quantities in
Section~\ref{sec:algorithm-enhancement} are obtained by summing local
inner-product and squared-norm contributions. The Fisher study in
Section~\ref{sec:fisher-multigpu-exp} reports the resulting end-to-end scaling.

\section{Numerical Experiments}
\label{sec:experiments}

We evaluate \name on standard and large-scale QP, public convex QCQP,
Mittelmann SOCP, and large-scale quasilinear Fisher equilibrium benchmarks.
We study multi-GPU scaling on both large-scale QP and conic instances.

\subsection{Experimental Setup}
\label{sec:exp-setup}

\paragraph{Selected baselines.}
We consider a broad set of candidate solvers spanning first-order and
interior-point methods, and compare \name with the selected baselines. The
selected first-order baselines are PDQP~\cite{lu2023practical},
PDHCG~\cite{huang2025restarted}, HPR-QP and HPR-SOCP~\cite{chen2025hpr},
SCS~\cite{o2016conic,scs}, PDCS~\cite{lin2025pdcs}, and
OSQP~\cite{stellato2020osqp}. The selected interior-point baselines are
MOSEK~\cite{mosek2025manual}, Gurobi~\cite{gurobi2024manual},
COPT~\cite{ge2022cardinal}, and Clarabel~\cite{goulart2024clarabel}. All
solvers are compared at the same target accuracy $\epsilon$.

\paragraph{Computing environment.}
Experiments are conducted on a server with eight NVIDIA H100 GPUs, each with
80GB HBM3, an Intel Xeon Platinum 8469C CPU at 2.60GHz, and 512GB RAM. Unless
otherwise stated, each run uses a single GPU.

\paragraph{Evaluation metrics.}
We report the number of solved instances and shifted geometric mean (SGM)
runtime
with shift 10; the QP tables also report arithmetic mean runtime:
\begin{equation}
    \operatorname{SGM}_{10}
    =
    \exp\left(\frac1N\sum_{i=1}^N \log(t_i+10)\right)-10 .
\end{equation}
Any unsuccessful run is charged the time limit when it is included in an
aggregate runtime metric.

\subsection{Standard Convex QP Benchmarks}

We first evaluate the QP specialization of \name on two standard benchmark repositories.
\begin{itemize}
    \item \textbf{Maros-M{\'e}sz{\'a}ros benchmark} \cite{maros1999repository}: 134 convex quadratic programming instances, tested at tolerance $\epsilon=10^{-6}$ with a 1000 second time limit.
    \item \textbf{Mittelmann benchmark} \cite{mittelmann2021decision}: 21 continuous convex QP instances, tested at tolerances $\epsilon=10^{-6}$ and $\epsilon=10^{-8}$ with a 3600 second time limit.
\end{itemize}

\begin{table}[H]
  \centering
  \papertablesetup
  \caption{Performance on 134 Maros-M{\'e}sz{\'a}ros instances with a
  1000-second time limit and tolerance $\epsilon=10^{-6}$. The best results
  are marked in \textbf{bold}, and the second best are
  \underline{underlined}.}
  \label{tab:comparison_134}
  \medskip
  \begin{tabular}{lcccc}
    \toprule
    \textbf{Solver} & \textbf{Total} & \textbf{Solved} & \textbf{SGM$_{10}$ (s)} & \textbf{Average (s)} \\
    \midrule
    PDQP & \multirow{4}{*}{134} & 118 & 28.53 & 160.70 \\
    PDHCG & & 111 & 33.51 & 210.53 \\
    HPR-QP & & \underline{124} & \textbf{10.56} & \underline{93.32} \\
    \name & & \textbf{126} & \underline{10.95} & \textbf{83.49} \\
    \bottomrule
  \end{tabular}
\end{table}

Table~\ref{tab:comparison_134} shows that \name improves robustness over
the earlier PDHCG implementation and is competitive with HPR-QP: with Jacobi
preconditioning on the non-diagonal subset it solves 126 instances, attains
the best arithmetic mean runtime, and is second best in shifted geometric
mean.

\begin{table}[htbp]
  \centering
  \papertablesetup
  \caption{Performance on 21 Mittelmann instances with a 3600-second time
  limit. The best results are marked in \textbf{bold}, and the second best are
  \underline{underlined}.}
  \label{tab:solver_comparison_21}
  \medskip
  \begin{tabular}{lcccc}
    \toprule
    \textbf{Solver} & \textbf{Total} & \textbf{Solved} & \textbf{SGM$_{10}$ (s)} & \textbf{Average (s)} \\
    \midrule
    \multicolumn{5}{c}{\textit{Tolerance $\epsilon = 10^{-6}$}} \\
    \midrule
    PDQP & \multirow{4}{*}{21} & 11 & 410.34 & 1942.89 \\
    PDHCG & & 10 & 486.81 & 1987.11 \\
    HPR-QP & & \underline{14} & \underline{126.88} & \underline{1282.34} \\
    \name & & \textbf{17} & \textbf{72.74} & \textbf{766.94} \\
    \midrule
    \multicolumn{5}{c}{\textit{Tolerance $\epsilon = 10^{-8}$}} \\
    \midrule
    HPR-QP & \multirow{2}{*}{21} & \underline{14} & \underline{137.42} & \underline{1318.61} \\
    \name & & \textbf{17} & \textbf{91.22} & \textbf{894.32} \\
    \bottomrule
  \end{tabular}
\end{table}

On the Mittelmann benchmark, \name solves 17 of the 21 instances at both
tolerances, the most among the tested solvers, and tightening the tolerance
from $10^{-6}$ to $10^{-8}$ increases its runtime only moderately; the
aggregate comparison does not isolate the individual effects of reflection,
restart, and inner accuracy.

\subsection{Public Convex QCQP Benchmark}
\label{sec:qcqp-exp}

We next consider 21 public convex QCQP instances assembled from QPLIB
\cite{furini2019qplib} and the Mittelmann benchmark collection
\cite{mittelmann2021decision}. Write an instance as
\begin{equation}
\label{eq:qcqp-benchmark-form}
\begin{aligned}
    \min_{x\in\mathcal X}\quad
        & \frac12 x^\top Q_0x+c_0^\top x \\
    \mathrm{s.t.}\quad
        & \frac12 x^\top Q_ix+c_i^\top x\le d_i,
        \qquad i=1,\ldots,p,
\end{aligned}
\end{equation}
where $\mathcal X$ collects the linear constraints and variable bounds and
$Q_i\succeq0$.  Choosing factors $Q_i=R_i^\top R_i$ and using the rotated
second-order cone from Section~\ref{sec:cone-projections}, each quadratic
constraint is equivalent to
\begin{equation}
\label{eq:qcqp-to-rsoc}
    (d_i-c_i^\top x,\,1,\,R_ix)\in\mathcal K_{\mathrm{rsoc}}.
\end{equation}
Retaining the quadratic objective in
\eqref{eq:qcqp-benchmark-form} and replacing only the quadratic constraints by
\eqref{eq:qcqp-to-rsoc} gives a quadratic SOCP (QSOCP). A solver requiring a
linear objective receives the equivalent SOCP
\begin{equation}
\label{eq:qcqp-to-socp}
\begin{aligned}
    \min_{x\in\mathcal X,\,t}\quad & t+c_0^\top x \\
    \mathrm{s.t.}\quad
        & (t,\,1,\,R_0x)\in\mathcal K_{\mathrm{rsoc}},\\
        & (d_i-c_i^\top x,\,1,\,R_ix)\in\mathcal K_{\mathrm{rsoc}},
          \qquad i=1,\ldots,p.
\end{aligned}
\end{equation}
Thus the SOCP path adds an objective-epigraph cone, whereas the QSOCP path
retains $Q_0$ explicitly. The formulation used by each solver is recorded in
Table~\ref{tab:qcqp-public}; COPT GPU is evaluated through its native QCQP
interface. Each run has a 3600 second limit. The table reports the number of
runs declared solved at the target tolerance and $\operatorname{SGM}_{10}$ over
all 21 instances, with every unsuccessful run charged the time limit.

\begin{table}[htbp]
    \centering
  \papertablesetup
    \caption{Aggregate performance on 21 public convex QCQP instances. The
    formulation column gives the actual input form used in each run. Times are
    in seconds and the time limit is 3600 seconds. Within each metric column,
    the best result is marked in \textbf{bold} and the second best is
    \underline{underlined}; tied values share the same marking.}
    \label{tab:qcqp-public}
    \medskip
    \begin{tabular}{llrrrrrr}
        \toprule
        & & \multicolumn{2}{c}{$\epsilon=10^{-4}$}
        & \multicolumn{2}{c}{$\epsilon=10^{-6}$}
        & \multicolumn{2}{c}{$\epsilon=10^{-8}$}\\
        \cmidrule(lr){3-4}\cmidrule(lr){5-6}\cmidrule(lr){7-8}
        Solver & Formulation & Solved & SGM$_{10}$ & Solved & SGM$_{10}$ & Solved & SGM$_{10}$\\
        \midrule
        \multicolumn{8}{l}{\textit{First-order methods}}\\
        \name                    & QSOCP & \textbf{21} & \underline{8.11} & \textbf{21} & 21.64 & \textbf{20} & 42.63\\
        HPR-SOCP                  & QSOCP & 19 & 36.00 & 19 & 54.56 & \underline{19} & 82.06\\
        SCS indirect             & SOCP  & 13 & 209.18 & 10 & 1107.13 & 5 & 2744.60\\
        SCS direct               & SOCP  & 16 & 62.70 & 14 & 215.37 & 11 & 625.47\\
        PDCS                     & SOCP  & 13 & 152.94 & 13 & 249.69 & 12 & 664.11\\
        \addlinespace
        \multicolumn{8}{l}{\textit{Interior-point methods}}\\
        MOSEK                    & QCQP  & \underline{20} & 16.40 & \underline{20} & 16.54 & \textbf{20} & 16.76\\
        Gurobi                   & QCQP  & \textbf{21} & \textbf{6.06} & \underline{20} & \underline{11.74} & \textbf{20} & \underline{14.71}\\
        COPT GPU                 & QCQP  & \underline{20} & 9.68 & \underline{20} & \textbf{9.78} & \textbf{20} & \textbf{9.94}\\
        Clarabel                 & SOCP  & \textbf{21} & 14.66 & \textbf{21} & 19.97 & 17 & 50.54\\
        \bottomrule
    \end{tabular}
\end{table}

\name solves all 21 instances at $10^{-4}$ and $10^{-6}$ and 20 at
$10^{-8}$. It remains the most robust and fastest first-order solver in
aggregate at every target tolerance. Against HPR-SOCP, \name records
21 wins and no losses, 19 wins and 2 losses, and 17 wins, 3 losses, and 1 tie
at $10^{-4}$, $10^{-6}$, and $10^{-8}$, respectively. At $10^{-4}$, \name
attains the second-lowest $\operatorname{SGM}_{10}$ overall, behind only
Gurobi; the commercial QCQP interfaces retain an aggregate runtime advantage
at the tighter tolerances.

\subsection{Public SOCP Benchmark}
\label{sec:socp-exp}

The SOCP study uses the 18 public instances in the Mittelmann collection
\cite{mittelmann2024socp}.

\begin{table}[htbp]
    \centering
  \papertablesetup
    \caption{Aggregate performance on 18 public SOCP instances. Times are in
    seconds and the time limit is 3600 seconds. Within each metric column,
    the best result is marked in \textbf{bold} and the second best is
    \underline{underlined}; tied values share the same marking.}
    \label{tab:socp-public}
    \medskip
    \begin{tabular}{lrrrr}
        \toprule
        & \multicolumn{2}{c}{$\epsilon=10^{-4}$}
        & \multicolumn{2}{c}{$\epsilon=10^{-6}$}\\
        \cmidrule(lr){2-3}\cmidrule(lr){4-5}
        Solver & Solved & SGM$_{10}$ & Solved & SGM$_{10}$\\
        \midrule
        \multicolumn{5}{l}{\textit{First-order methods}}\\
        \name             & \textbf{18} & \underline{18.76} & \textbf{18} & 69.25\\
        PDCS              & \underline{14} & 613.98 & 12 & 614.03\\
        SCS direct        & \underline{14} & 475.45 & 9 & 1166.90\\
        SCS indirect      & 4 & 2479.75 & 3 & 2644.17\\
        HPR-SOCP          & \textbf{18} & \textbf{18.11} & \textbf{18} & 51.19\\
        \addlinespace
        \multicolumn{5}{l}{\textit{Interior-point methods}}\\
        COPT GPU          & \textbf{18} & 19.00 & \textbf{18} & \textbf{19.01}\\
        Gurobi            & \textbf{18} & 22.27 & \textbf{18} & \underline{19.85}\\
        MOSEK             & \textbf{18} & 30.97 & \textbf{18} & 32.44\\
        Clarabel          & \underline{14} & 601.98 & \underline{14} & 602.04\\
        \bottomrule
    \end{tabular}
\end{table}

Both \name and HPR-SOCP solve all 18 instances at both tolerances. HPR-SOCP
has the lower aggregate $\operatorname{SGM}_{10}$, although the two solvers are
close at $10^{-4}$, with values of 18.11 and 18.76. Moreover, \name remains
competitive at the instance level, with 8 wins and 10 losses at $10^{-4}$ and
5 wins and 13 losses at $10^{-6}$. Thus the updated SOCP results show matching
robustness but favor HPR-SOCP in aggregate runtime. In contrast, the QCQP
results indicate that \name benefits more clearly when the quadratic
objective is retained and handled directly.

\subsection{Real-World Large-Scale Sparse QPs}
\label{sec:realworldqp}

We also evaluate large-scale Lasso instances derived from LIBSVM \cite{chang2011libsvm} and the SuiteSparse Matrix Collection \cite{davis2011university}. The Lasso problem
\begin{equation}
    \min_x \; \|Ax-b\|_2^2+\lambda\|x\|_1
\end{equation}
is solved through its standard QP reformulation
\begin{equation}
\begin{aligned}
    \min_{x,y,t}\quad & y^\top y+\lambda\mathbf 1^\top t\\
    \mathrm{s.t.}\quad & y=Ax-b,\\
    & -t\le x\le t .
\end{aligned}
\end{equation}
We set $\lambda=0.01\|A^\top b\|_\infty$ and use tolerance $\epsilon=10^{-6}$.
The dimensions and densities of the nine instances are listed in
Table~\ref{tab:lasso-instances} of Appendix~\ref{app:instance-statistics}.

\begin{table}[htbp]
  \centering
  \papertablesetup
  \caption{Single-GPU solve times on large-scale Lasso-derived QPs. Times
  are in seconds, with a 7200-second limit. The best results are marked in
  \textbf{bold}, the second best are \underline{underlined}, and ``f''
  denotes an unsuccessful run.}
  \label{real_lasso}
  \medskip
    \begin{tabular}{lrrrrrrr}
    \toprule
    & \multicolumn{6}{c}{First-order methods}
      & \multicolumn{1}{c}{Interior-point} \\
    \cmidrule(lr){2-7}\cmidrule(lr){8-8}
    Problem & \textbf{\name} & HPR-QP & PDHCG & PDQP & SCS GPU & OSQP & COPT \\
    \midrule
    SLS & \textbf{1.96} & \underline{2.09} & 3.35 & 7.30 & 345.09 & 80.32 & 88.21 \\
    rcv1\_test & \textbf{2.37} & \underline{6.21} & 7.12 & 19.54 & f & f & f \\
    avazu-site.tr & \textbf{1337.05} & 4911.41 & \underline{1377.54} & 5124.82 & f & f & f \\
    avazu-app & \textbf{217.70} & \underline{753.65} & 1429.55 & 5557.97 & f & f & f \\
    avazu-site & \textbf{1642.69} & \underline{3213.38} & 4224.95 & f & f & f & f \\
    kddb2010\_test & \underline{11.59} & 26.87 & \textbf{11.10} & 46.49 & 490.66 & 255.81 & 69.57 \\
    kdda2010\_test & \textbf{9.90} & \underline{29.36} & 61.00 & 148.01 & f & f & f \\
    kddb2010\_train & \underline{703.36} & 1971.24 & \textbf{387.00} & 1715.50 & f & f & f \\
    kdda2010\_train & \textbf{341.16} & \underline{842.02} & 2705.94 & f & f & f & f \\
    \bottomrule
  \end{tabular}
\end{table}

Table~\ref{real_lasso} shows that \name is the fastest solver on seven of
the nine instances and is especially effective at the largest scales: on
\texttt{avazu-app} it is $3.46\times$ faster than HPR-QP, while the
remaining solvers are slower still or fail.

\begin{table}[htbp]
    \centering
  \papertablesetup
    \caption{Multi-GPU scaling of \name on the Lasso-derived QPs. Times are in
    seconds, and $S_p=t_1/t_p$ is the speedup over one GPU. The fastest time in
    each row is marked in \textbf{bold}, and the second fastest is
    \underline{underlined}.}
    \label{tab:lasso-scaling}
    \medskip
    \begin{tabular}{lrrrrrrr}
        \toprule
        & \multicolumn{4}{c}{Solve time (s)}
        & \multicolumn{3}{c}{Speedup}\\
        \cmidrule(lr){2-5}\cmidrule(lr){6-8}
        Problem & 1 GPU & 2 GPUs & 4 GPUs & 8 GPUs
            & $S_2$ & $S_4$ & $S_8$\\
        \midrule
        SLS               & 1.96    & 1.10   & \underline{0.56}   & \textbf{0.50}   & 1.78 & 3.50 & 3.92\\
        rcv1\_test        & 2.37    & 1.48   & \underline{1.06}   & \textbf{0.81}   & 1.60 & 2.24 & 2.93\\
        avazu-site.tr     & 1337.05 & 953.02 & \underline{547.28} & \textbf{376.90} & 1.40 & 2.44 & 3.55\\
        avazu-app         & 217.70  & 116.92 & \underline{65.86}  & \textbf{46.30}  & 1.86 & 3.31 & 4.70\\
        avazu-site        & 1642.69 & 993.62 & \underline{765.46} & \textbf{503.99} & 1.65 & 2.15 & 3.26\\
        kddb2010\_test    & 11.59   & 11.18  & \underline{8.79}   & \textbf{7.07}   & 1.04 & 1.32 & 1.64\\
        kdda2010\_test    & 9.90    & 8.50   & \underline{6.77}   & \textbf{5.61}   & 1.16 & 1.46 & 1.76\\
        kddb2010\_train   & 703.36  & 466.28 & \underline{317.69} & \textbf{233.44} & 1.51 & 2.21 & 3.01\\
        kdda2010\_train   & 341.16  & 237.19 & \underline{167.62} & \textbf{124.89} & 1.44 & 2.04 & 2.73\\
        \bottomrule
    \end{tabular}
\end{table}

All nine Lasso-derived QPs benefit from multi-GPU execution. On
\texttt{avazu-app}, eight GPUs reduce the solve time from 217.70 to 46.30
seconds, a $4.70\times$ speedup; the corresponding speedup on
\texttt{avazu-site.tr} is $3.55\times$.

\subsection{Large-Scale Fisher Equilibrium}
\label{sec:fisher-multigpu-exp}

Consider a quasilinear Fisher market with $n$ buyers and $m$ divisible goods.
Buyer $i$ has budget $w_i>0$, good $j$ has supply $b_j>0$, and $u_{ij}$ is
buyer $i$'s value for good $j$. Let
$\mathcal E:=\{(i,j):u_{ij}>0\}$ be the sparse valuation graph. The quasilinear
extension of the Eisenberg-Gale program~\cite{gao2023infinite} is
\begin{equation}
\label{eq:fisher-qleg}
\begin{aligned}
    \max_{x,\delta}\quad
        & \sum_{i=1}^n w_i
          \log\!\left(\sum_{j:(i,j)\in\mathcal E}u_{ij}x_{ij}+\delta_i\right)
          -\sum_{i=1}^n\delta_i\\
    \mathrm{s.t.}\quad
        & \sum_{i:(i,j)\in\mathcal E}x_{ij}=b_j,
          \qquad j=1,\ldots,m,\\
        & x_{ij}\ge0,\quad (i,j)\in\mathcal E,
          \qquad \delta_i\ge0,\quad i=1,\ldots,n,
\end{aligned}
\end{equation}
where $\delta_i$ represents the money retained by buyer $i$. Introducing
$z_i$ and $t_i$ gives the equivalent exponential-cone formulation used in
our experiments:
\begin{equation}
\label{eq:fisher-conic}
\begin{aligned}
    \min_{x,\delta,z,t}\quad
        & \sum_{i=1}^n(\delta_i-w_i z_i)\\
    \mathrm{s.t.}\quad
        & \sum_{i:(i,j)\in\mathcal E}x_{ij}=b_j,
          \qquad j=1,\ldots,m,\\
        & t_i=\sum_{j:(i,j)\in\mathcal E}u_{ij}x_{ij}+\delta_i,
          \qquad i=1,\ldots,n,\\
        & (z_i,1,t_i)\in\mathcal K_{\mathrm{exp}},
          \qquad i=1,\ldots,n,\\
        & x_{ij}\ge0,\quad (i,j)\in\mathcal E,
          \qquad \delta_i\ge0,\quad i=1,\ldots,n.
\end{aligned}
\end{equation}
Let $e:=|\mathcal E|$ and $\rho:=e/(nm)$.  We store allocations only on
$\mathcal E$ and represent each $(z_i,1,t_i)$ by an exponential-cone block
whose middle coordinate is fixed at one.  Thus \eqref{eq:fisher-conic} has
$e+4n$ stored primal coordinates, $m+n$ affine equations, $n$ exponential
cones, and $e+n$ nonnegative coordinates.

We evaluate five instances with $10^3\le n\le10^7$ buyers, $400\le m\le4000$
goods, and valuation density $0.01\le\rho\le0.20$. Nonzero valuations
$u_{ij}$ and budgets $w_i$ are sampled uniformly from $[0.1,1.1]$, every
good has supply $b_j=0.2n$, and all instances use random seed 1. The exact
values of $(n,m,\rho)$ and the resulting conic dimensions are reported in
Table~\ref{tab:fisher-instances} of Appendix~\ref{app:instance-statistics}.
The solver time limit is 3600 seconds.

\begin{table}[htbp]
    \centering
  \papertablesetup
    \caption{Solver comparison on the quasilinear Fisher equilibrium
    instances, with each GPU solver using one GPU. Times are in seconds. The
    best successful time in each row is
    marked in \textbf{bold}, the second best is \underline{underlined}, and
    ``f'' denotes an unsuccessful or omitted run under the 3600-second
    resource limit.}
    \label{tab:fisher-solvers}
    \medskip
    \begin{tabular}{rrrrrrrr}
        \toprule
        & \multicolumn{3}{c}{First-order methods}
        & \multicolumn{4}{c}{Interior-point methods}\\
        \cmidrule(lr){2-4}\cmidrule(lr){5-8}
        $n$ & \name & PDCS & SCS direct & COPT GPU & MOSEK & Gurobi & Clarabel\\
        \midrule
        \multicolumn{8}{c}{\textit{Tolerance $\epsilon=10^{-4}$}}\\
        \midrule
        1,000      & \textbf{0.36} & 30.32 & 7.45 & 0.67 & \underline{0.60} & 157.13 & 1.30\\
        10,000     & \textbf{0.69} & 96.22 & 356.51 & \underline{6.61} & 24.19 & f & 80.82\\
        100,000    & \textbf{4.43} & 96.02 & f & \underline{64.69} & 68.36 & f & 1353.31\\
        1,000,000  & \textbf{46.65} & 1565.44 & f & f & \underline{687.52} & f & f\\
        10,000,000 & \textbf{231.94} & f & f & f & f & f & f\\
        \midrule
        \multicolumn{8}{c}{\textit{Tolerance $\epsilon=10^{-6}$}}\\
        \midrule
        1,000      & 0.85 & 30.96 & 238.16 & \underline{0.50} & \textbf{0.48} & f & 1.81\\
        10,000     & \textbf{0.77} & 502.63 & f & \underline{7.16} & 22.77 & f & 123.97\\
        100,000    & \textbf{4.77} & 1315.10 & f & \underline{66.10} & 83.41 & f & 1929.71\\
        1,000,000  & \textbf{48.32} & f & f & f & \underline{873.38} & f & f\\
        10,000,000 & \textbf{241.53} & f & f & f & f & f & f\\
        \bottomrule
    \end{tabular}
\end{table}

The symbol ``f'' covers time-limit, memory-limit, and numerical terminations.
After a solver failed at a given size and tolerance, larger or tighter runs
with the same solver were omitted.

\begin{table}[htbp]
    \centering
  \papertablesetup
    \caption{Multi-GPU scaling of \name on the quasilinear Fisher equilibrium
    instances. Times are in seconds, and $S_p=t_1/t_p$ is the speedup over one GPU. The fastest time in
    each row is marked in \textbf{bold}, and the second fastest is
    \underline{underlined}.}
    \label{tab:fisher-scaling}
    \medskip
    \begin{tabular}{rrrrrrrr}
        \toprule
        & \multicolumn{4}{c}{Solve time (s)}
        & \multicolumn{3}{c}{Speedup}\\
        \cmidrule(lr){2-5}\cmidrule(lr){6-8}
        $n$ & 1 GPU & 2 GPUs & 4 GPUs & 8 GPUs
            & $S_2$ & $S_4$ & $S_8$\\
        \midrule
        \multicolumn{8}{c}{\textit{Tolerance $\epsilon=10^{-4}$}}\\
        \midrule
        1,000      & \textbf{0.36} & \underline{0.42} & 0.45 & 0.49 & 0.86 & 0.80 & 0.73\\
        10,000     & 0.69 & 0.60 & \textbf{0.50} & \underline{0.55} & 1.15 & 1.38 & 1.25\\
        100,000    & 4.43 & 2.45 & \underline{1.46} & \textbf{1.05} & 1.81 & 3.03 & 4.22\\
        1,000,000  & 46.65 & 22.87 & \underline{11.16} & \textbf{5.85} & 2.04 & 4.18 & 7.97\\
        10,000,000 & 231.94 & 128.38 & \underline{65.93} & \textbf{34.28} & 1.81 & 3.52 & 6.77\\
        \midrule
        \multicolumn{8}{c}{\textit{Tolerance $\epsilon=10^{-6}$}}\\
        \midrule
        1,000      & \textbf{0.85} & \underline{0.97} & 1.01 & 1.11 & 0.88 & 0.84 & 0.77\\
        10,000     & 0.77 & 0.66 & \textbf{0.56} & \underline{0.60} & 1.17 & 1.38 & 1.28\\
        100,000    & 4.77 & 2.61 & \underline{1.55} & \textbf{1.12} & 1.83 & 3.08 & 4.26\\
        1,000,000  & 48.32 & 23.74 & \underline{11.54} & \textbf{6.07} & 2.04 & 4.19 & 7.96\\
        10,000,000 & 241.53 & 133.57 & \underline{68.68} & \textbf{35.69} & 1.81 & 3.52 & 6.77\\
        \bottomrule
    \end{tabular}
\end{table}

The smallest instances do not amortize collective latency, but scaling
becomes effective from $n=10^5$: at $n=10^6$, eight GPUs reduce the solve
time from 46.65 to 5.85 seconds at $10^{-4}$ and from 48.32 to 6.07 seconds
at $10^{-6}$, corresponding to speedups of $7.97\times$ and $7.96\times$,
and at
$n=10^7$, \name is the only solver reported as reaching optimality within the
time and memory limits.

\section{Conclusion}
\label{sec:conclusion}

We introduced \name, a matrix-free restarted primal-dual solver for
large-scale conic convex quadratic programming on GPUs.  The practical
method combines primal-first PDHG updates with an anchored reflected-Halpern
acceleration, matrix-free quadratic proximal solves, specialized cone
projections. \name supports both single- and multi-GPU execution.
For the underlying restarted averaged scheme, we proved local linear
convergence under uniform quadratic growth of the smoothed duality gap, with
exact and inexact proximal evaluations, including a fixed amount of
projected-gradient work per subproblem; the quadratic-growth property itself
was verified under strict complementarity.

The experiments cover standard and large-scale sparse QPs, public convex
QCQP and SOCP benchmarks, and large-scale quasilinear Fisher equilibrium
instances. The Fisher study shows strong scaling through eight GPUs and
robustness at $10^7$ buyers, corresponding to more than $4.4\times10^8$
stored primal coordinates, and the Lasso-derived QPs confirm multi-GPU
acceleration on large sparse diagonal-$Q$ instances.

\bibliographystyle{plainnat}
\bibliography{main,ref}

\clearpage
\appendix

\section{Proof in Section~\ref{sec:averaged-restart-theory}}

\subsection{Proof of Theorem~\ref{thm:exact-local-linear}}
\label{app:proof-exact-local-linear}

The exact contraction is obtained in four steps.  We first establish a
one-step energy inequality.  Summing this inequality over one epoch and using
convexity gives the ordinary gap bound for the Ces\`aro average.  The ordinary
gap estimate is then converted into a smoothed-gap estimate.  Finally,
Assumption~\ref{ass:smoothed-qg} converts the smoothed-gap bound into a
contraction of the distance to the KKT set.

Define the KKT operator
\begin{equation}
\label{eq:theory-kkt-operator}
    \mathcal F_{\rm KKT}(x,y)
    :=
    \begin{bmatrix}
        \partial F(x)-A^*y\\
        Ax-b
    \end{bmatrix}.
\end{equation}
Then $Z^\star=\mathcal F_{\rm KKT}^{-1}(0)$.

\begin{lemma}[one-step energy inequality]
\label{lem:one-step-energy}
For every comparison point
$\widehat z=(\widehat x,\widehat y)\in K\times Y$,
\begin{equation}
\label{eq:one-step-gap}
    \mathcal Q(z^{k+1},\widehat z)
    \le
    \frac12\|z^k-\widehat z\|_P^2
    -
    \frac12\|z^{k+1}-\widehat z\|_P^2
    -
    \frac12\|z^{k+1}-z^k\|_P^2.
\end{equation}
In particular, for every $z^\star\in Z^\star$,
\begin{equation}
\label{eq:inner-fejer}
    \|z^{k+1}-z^\star\|_P
    \le
    \|z^k-z^\star\|_P,
    \qquad
    \|z^k-z^\star\|
    \le
    \kappa_P\|z^0-z^\star\|.
\end{equation}
\end{lemma}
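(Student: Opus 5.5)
We will prove \eqref{eq:one-step-gap} by the standard Chambolle--Pock argument, writing the optimality conditions of the two substeps in \eqref{eq:theory-pdhg-step} as variational inequalities and pairing them with the comparison point $\widehat z$.

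\emph{Primal optimality.} The primal update \eqref{eq:theory-pdhg-primal} is characterized by
\[
\tau^{-1}(x^k-x^{k+1})+A^*y^k\in\partial F(x^{k+1}).
\]
Write $f$ for the quadratic part of $F$. Since $f$ is convex and $\widehat x\in K$, for every such $\widehat x$ we get
\[
f(x^{k+1})-f(\widehat x)\le\langle \tau^{-1}(x^k-x^{k+1})+A^*y^k,\;x^{k+1}-\widehat x\rangle .
\]
The possible $Q$-strong-convexity term $-\tfrac12\|x^{k+1}-\widehat x\|_Q^2$ can be dropped.

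\emph{Dual optimality.} The dual step \eqref{eq:theory-pdhg-dual} is exact and linear:
\[
\sigma^{-1}(y^{k+1}-y^k)=b-A(2x^{k+1}-x^k).
\]
Hence for every $\widehat y$,
\[
0=\langle \sigma^{-1}(y^k-y^{k+1})+b-A(2x^{k+1}-x^k),\;y^{k+1}-\widehat y\rangle .
\]

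\emph{Assembling the gap.} Expand
\[
\mathcal Q(z^{k+1},\widehat z)=f(x^{k+1})-f(\widehat x)+\langle\widehat y,b-Ax^{k+1}\rangle-\langle y^{k+1},b-A\widehat x\rangle .
\]
Substituting the two relations and collecting the bilinear terms in $A$, everything except the proximal terms reduces to a cross term of the form
\[
\langle A(x^{k+1}-x^k),\,y^{k+1}-\widehat y\rangle-\langle A(x^{k+1}-\widehat x),\,y^{k+1}-y^k\rangle
\]
(signs to be checked). Combined with the terms $\tau^{-1}\langle x^k-x^{k+1},x^{k+1}-\widehat x\rangle$ and $\sigma^{-1}\langle y^k-y^{k+1},y^{k+1}-\widehat y\rangle$, this is exactly
\[
\langle z^k-z^{k+1},\,z^{k+1}-\widehat z\rangle_P
\]
for the preconditioner \eqref{eq:theory-preconditioner}. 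The bookkeeping with the positive off-diagonal blocks $A$, $A^*$ is the main obstacle. I would verify it by writing both cross terms explicitly and checking that $\langle u,v\rangle_P$ contains $\langle Au_x,v_y\rangle+\langle u_y,Av_x\rangle$ with the right signs, adjusting to the primal-first ordering. The polarization identity
\[
2\langle a-b,\,b-c\rangle_P=\|a-c\|_P^2-\|b-c\|_P^2-\|a-b\|_P^2,
\]
applied with $a=z^k$, $b=z^{k+1}$, $c=\widehat z$, then yields \eqref{eq:one-step-gap}. Positive definiteness of $P$ under \eqref{eq:theory-step-condition} makes these genuine norms.

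\emph{Fej\'er monotonicity.} For \eqref{eq:inner-fejer}, take $\widehat z=z^\star\in Z^\star$; note $x^\star\in K$. The saddle-point property gives
\[
\mathcal L(x^{k+1},y^\star)\ge\mathcal L(x^\star,y^\star)\ge\mathcal L(x^\star,y^{k+1}),
\]
so $\mathcal Q(z^{k+1},z^\star)\ge0$. The inequality \eqref{eq:one-step-gap} then gives $\|z^{k+1}-z^\star\|_P\le\|z^k-z^\star\|_P$. Iterating and using $\lambda_-\|u\|^2\le\|u\|_P^2\le\lambda_+\|u\|^2$ yields
\[
\|z^k-z^\star\|\le\sqrt{\lambda_+/\lambda_-}\,\|z^0-z^\star\|=\kappa_P\|z^0-z^\star\|.
\]
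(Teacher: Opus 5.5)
Your proposal is correct and follows essentially the paper's route: the paper packages your two optimality relations as $P(z^k-z^{k+1})\in\mathcal F_{\rm KKT}(z^{k+1})$, uses convexity of $F$ to get $\mathcal Q(z^{k+1},\widehat z)\le\langle z^k-z^{k+1},z^{k+1}-\widehat z\rangle_P$, and then applies the same three-point identity and the same Fej\'er/norm-equivalence argument. On the bookkeeping you flagged, the first cross term should read $\langle A(x^k-x^{k+1}),y^{k+1}-\widehat y\rangle$ (your displayed sign is reversed); with that correction it matches the positive off-diagonal blocks of $P$ exactly.
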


\begin{proof}
Optimality of the primal proximal step gives
\[
    \tau^{-1}(x^k-x^{k+1})
    \in
    \partial F(x^{k+1})-A^*y^k.
\]
After adding $A^*(y^k-y^{k+1})$,
\[
    \tau^{-1}(x^k-x^{k+1})
    +
    A^*(y^k-y^{k+1})
    \in
    \partial F(x^{k+1})-A^*y^{k+1}.
\]
The dual step similarly gives
\[
    A(x^k-x^{k+1}) + \sigma^{-1}(y^k-y^{k+1}) = A(x^k-x^{k+1}) + A(2x^{k+1}-x^k)-b = Ax^{k+1}-b.
\]
Together, these two relations show that
\[
    g^{k+1}
    :=
    P(z^k-z^{k+1})
    \in
    \mathcal F_{\rm KKT}(z^{k+1}).
\]
Convexity of $F$ implies
\[
    \mathcal Q(z^{k+1},\widehat z)
    \le
    \langle g^{k+1},z^{k+1}-\widehat z\rangle.
\]
The three-point identity in the $P$-inner product now gives
\eqref{eq:one-step-gap}.  At a saddle point,
$\mathcal Q(z^{k+1},z^\star)\ge0$.  Dropping this term and the final
nonnegative squared norm proves Fej\'er monotonicity in
\eqref{eq:inner-fejer}.  Finally,
\[
    \|z^k-z^\star\|
    \le
    \lambda_-^{-1/2}\|z^k-z^\star\|_P
    \le
    \lambda_-^{-1/2}\|z^0-z^\star\|_P
    \le
    \kappa_P\|z^0-z^\star\|.
\]
\end{proof}

\begin{proposition}[exact one-epoch estimates]
\label{prop:one-epoch-estimates}
Let
\[
    \bar z^T
    :=
    \frac1T\sum_{k=0}^{T-1}z^{k+1}.
\]
Then, for every $z^\star\in Z^\star$,
\begin{equation}
\label{eq:average-boundedness}
    \|z^k-z^\star\|
    \le
    \kappa_P\|z^0-z^\star\|,
    \qquad
    \|\bar z^T-z^\star\|
    \le
    \kappa_P\|z^0-z^\star\|.
\end{equation}
If, in addition, $\xi>0$ and $T\ge2\lambda_+/\xi$, then, for every center
$\dot z\in E\times Y$,
\begin{equation}
\label{eq:smoothed-gap-rate}
    G_\xi(\bar z^T;\dot z)
    \le
    \frac{\lambda_+}{T}\|z^0-\dot z\|^2.
\end{equation}
\end{proposition}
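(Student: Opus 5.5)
The plan is to derive both claims directly from Lemma~\ref{lem:one-step-energy}. For \eqref{eq:average-boundedness}, the first inequality is exactly \eqref{eq:inner-fejer}. For the average, we use the triangle inequality and convexity of the norm:
\[
\|\bar z^T-z^\star\|\le\frac1T\sum_{k=0}^{T-1}\|z^{k+1}-z^\star\|\le\kappa_P\|z^0-z^\star\|.
\]

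For \eqref{eq:smoothed-gap-rate}, fix $\widehat z\in K\times Y$ and sum \eqref{eq:one-step-gap} over $k=0,\dots,T-1$. The right-hand side telescopes. Dropping the nonpositive terms gives
\[
\sum_{k=0}^{T-1}\mathcal Q(z^{k+1},\widehat z)\le\tfrac12\|z^0-\widehat z\|_P^2.
\]
The map $z\mapsto\mathcal Q(z,\widehat z)=\mathcal L(x,\widehat y)-\mathcal L(\widehat x,y)$ is convex in $x$ and affine in $y$. Jensen's inequality then yields
\[
\mathcal Q(\bar z^T,\widehat z)\le\frac{1}{2T}\|z^0-\widehat z\|_P^2\le\frac{\lambda_+}{2T}\|z^0-\widehat z\|^2.
\]
Here $\bar z^T\in K\times Y$ because $K$ is convex.

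Next we convert this to the smoothed gap with an arbitrary center $\dot z$. By Young's inequality,
\[
\|z^0-\widehat z\|^2\le2\|z^0-\dot z\|^2+2\|\widehat z-\dot z\|^2.
\]
Hence
\[
\mathcal Q(\bar z^T,\widehat z)-\frac\xi2\|\widehat z-\dot z\|^2\le\frac{\lambda_+}{T}\|z^0-\dot z\|^2+\Bigl(\frac{\lambda_+}{T}-\frac\xi2\Bigr)\|\widehat z-\dot z\|^2.
\]
The assumption $T\ge2\lambda_+/\xi$ makes the last coefficient nonpositive. Taking the supremum over $\widehat z\in K\times Y$ then gives \eqref{eq:smoothed-gap-rate}.

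The only delicate point is the Jensen step. The function $\mathcal Q(\cdot,\widehat z)$ is convex rather than affine in $x$, because of the quadratic term. This is still the correct direction: we need $\mathcal Q(\bar z^T,\widehat z)$ bounded above by the average of the $\mathcal Q(z^{k+1},\widehat z)$, which convexity supplies.

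The constant in the conversion step also needs checking. The factor $\tfrac12$ from the energy inequality cancels the factor $2$ from Young's inequality, which produces exactly $\lambda_+/T$ and the threshold $2\lambda_+/\xi$.
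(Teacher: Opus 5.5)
Your proof is correct and follows the paper's argument step for step: the Fej\'er bound from Lemma~\ref{lem:one-step-energy} plus averaging, telescoping the energy inequality, Jensen's inequality via convexity in $x$ and affinity in $y$, and $P\preceq\lambda_+I$. The only variation is the final conversion. You use $\|z^0-\widehat z\|^2\le2\|z^0-\dot z\|^2+2\|\widehat z-\dot z\|^2$, whereas the paper relaxes $\widehat x\in K$ to $\widehat x\in E$ and completes the square exactly before bounding the resulting constant; both give exactly $\lambda_+/T$ under $T\ge2\lambda_+/\xi$.
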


\begin{proof}
Summing \eqref{eq:one-step-gap} from $k=0$ to $T-1$ and discarding
nonpositive terminal terms gives
\[
    \sum_{k=0}^{T-1}\mathcal Q(z^{k+1},\widehat z)
    \le
    \frac12\|z^0-\widehat z\|_P^2.
\]
The function $\mathcal L(\cdot,\widehat y)$ is convex and
$\mathcal L(\widehat x,\cdot)$ is affine.  Then, for every
$\widehat z\in K\times Y$, Jensen's inequality shows that
\begin{equation}
\label{eq:ordinary-gap-rate}
    \mathcal Q(\bar z^T,\widehat z)
    \le
    \frac{1}{2T}\|z^0-\widehat z\|_P^2
    \le
    \frac{\lambda_+}{2T}\|z^0-\widehat z\|^2,
\end{equation}
where the second inequality follows from $P\preceq\lambda_+I$.  The first
estimate in \eqref{eq:average-boundedness} follows from
Lemma~\ref{lem:one-step-energy}; convexity of the norm and averaging give the
second.

For the smoothed-gap estimate, set
$a=\lambda_+/(2T)$ and $b_\xi=\xi/2$.  Then
\begin{equation}
\label{eq:smoothed-gap-maximization}
    G_\xi(\bar z^T;\dot z)
    \le
    \sup_{\widehat z\in K\times Y}
    \left\{
        a\|z^0-\widehat z\|^2
        -
        b_\xi\|\widehat z-\dot z\|^2
    \right\}.
\end{equation}
Relaxing $\widehat x\in K$ to $\widehat x\in E$ gives an upper bound.  For
$a<b_\xi$, completion of the square gives the unrestricted supremum
\[
    \frac{ab_\xi}{b_\xi-a}\|z^0-\dot z\|^2.
\]
Condition $T\ge2\lambda_+/\xi$ gives $a\le b_\xi/2$ and hence
$b_\xi/(b_\xi-a)\le2$.  Substitution into
\eqref{eq:smoothed-gap-maximization} proves
\eqref{eq:smoothed-gap-rate}.
\end{proof}

\begin{proof}[Proof of Theorem~\ref{thm:exact-local-linear}]
We prove by induction that the restart points remain in the retention ball
and that \eqref{eq:main-linear-rate} holds.  Both claims are immediate for
$n=0$.  Suppose they hold through outer iteration $D$.  For every
$t\le D$, choose
\[
    z_t^\star
    \in
    \Pi_{Z^\star}(z^{t,0}).
\]
Proposition~\ref{prop:one-epoch-estimates} gives
\[
\begin{aligned}
    \|z^{D+1,0}-z^{0,0}\|
    &\le
    \sum_{t=0}^{D}\|z^{t+1,0}-z^{t,0}\|\\
    &\le
    \sum_{t=0}^{D}
    \left(
        \|z^{t+1,0}-z_t^\star\|
        +
        \|z_t^\star-z^{t,0}\|
    \right)\\
    &\le
    (1+\kappa_P)\sum_{t=0}^{D}d_t\\
    &\le
    (1+\kappa_P)\sum_{t=0}^{D}e^{-t}d_0
    \le
    R_0.
\end{aligned}
\]
Thus the new restart point stays in the ball on which quadratic growth is
valid.  The same estimate with the last term omitted shows that the moving
center $z_D^\star$ also belongs to that ball.  Applying
Assumption~\ref{ass:smoothed-qg} and
Proposition~\ref{prop:one-epoch-estimates} gives
\[
    d_{D+1}^2 \le \alpha_\xi^{-1} G_\xi(z^{D+1,0};z_D^\star) \le \frac{\lambda_+}{\alpha_\xi T} \|z^{D,0}-z_D^\star\|^2 = \frac{\lambda_+}{\alpha_\xi T}d_D^2 \le e^{-2}d_D^2 \le e^{-2(D+1)}d_0^2,
\]
where the penultimate inequality is precisely the second condition in
\eqref{eq:main-restart-length}.  This closes both parts of the induction.

The proof follows a deliberate order: boundedness is established first, so
that the local quadratic-growth constant is valid at the new epoch output;
only then is quadratic growth used to obtain contraction.  This avoids
assuming local retention as an unstated premise.
\end{proof}

\subsection{Proof of Theorem~\ref{thm:inexact-local-linear}}
\label{app:proof-inexact-local-linear}

We first establish the finite-epoch perturbation estimate used to compare an
inexact epoch with its exact shadow orbit, and then prove the theorem by
combining this estimate with the exact one-epoch contraction.

\begingroup
\renewcommand{\thetheorem}{\epochperturbationnumber}
\begin{lemma}[perturbation of one finite epoch]
\label{lem:theory-epoch-perturbation}
For fixed $T$, $\tau$, and $\sigma$, there is
$C_{\rm ep}<\infty$, independent of the epoch number, the inner algorithm,
and the error values, such that
\begin{equation}
\label{eq:theory-epoch-perturbation}
    \|\widetilde{\mathcal E}_T(z)-\mathcal E_T(z)\|
    \le
    C_{\rm ep}\sum_{k=0}^{T-1}\delta_k,
\end{equation}
where $\mathcal E_T$ and $\widetilde{\mathcal E}_T$ are the exact and inexact
epoch averages starting from the same point.
\end{lemma}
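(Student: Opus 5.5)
The plan is to compare the inexact orbit $\widetilde z^k$ with the exact shadow orbit $z^k$ started from the same point $z^0=\widetilde z^0=z$. We then show that the discrepancy $e_k:=\widetilde z^k-z^k$ obeys a linear recursion driven by the proximal errors $\delta_k$. The key structural fact is that $\mathcal T$ is nonexpansive in the $P$-metric. This holds because, by the proof of Lemma~\ref{lem:one-step-energy}, $P(z-\mathcal T z)\in\mathcal F_{\rm KKT}(\mathcal T z)$ with $\mathcal F_{\rm KKT}$ maximal monotone. Hence $\mathcal T=(I+P^{-1}\mathcal F_{\rm KKT})^{-1}$ is firmly nonexpansive in $\|\cdot\|_P$.

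First, I would write one inexact step as an exact step plus a perturbation. Set $\widetilde w^{k+1}:=\mathcal T(\widetilde z^k)$, whose primal part is $p_{\rm sh}^{k+1}$ by \eqref{eq:theory-shadow-prox}. The dual update is affine in the returned primal point, so
\[
\widetilde z^{k+1}-\mathcal T(\widetilde z^k)=\bigl(\Delta_k,\,-2\sigma A\Delta_k\bigr),\qquad \Delta_k:=\widetilde x^{k+1}-p_{\rm sh}^{k+1},\quad \|\Delta_k\|=\delta_k .
\]
The Euclidean norm of this perturbation is at most $c_1\delta_k$ with $c_1:=\sqrt{1+4\sigma^2\|A\|^2}$. (In the $P$-metric it equals $\tau^{-1/2}\delta_k$, but the crude bound suffices.)

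Second, I would combine this with nonexpansiveness to get the recursion
\[
\|e_{k+1}\|_P\le\|\mathcal T(\widetilde z^k)-\mathcal T(z^k)\|_P+\lambda_+^{1/2}c_1\delta_k\le\|e_k\|_P+\lambda_+^{1/2}c_1\delta_k .
\]
Since $e_0=0$, this gives $\|e_k\|_P\le\lambda_+^{1/2}c_1\sum_{j<k}\delta_j$ for every $k\le T$.

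Third, I would average. We have $\widetilde{\mathcal E}_T(z)-\mathcal E_T(z)=\frac1T\sum_{k=0}^{T-1}e_{k+1}$, so
\[
\|\widetilde{\mathcal E}_T(z)-\mathcal E_T(z)\|\le\lambda_-^{-1/2}\max_k\|e_{k+1}\|_P\le\kappa_Pc_1\sum_{k=0}^{T-1}\delta_k .
\]
This yields $C_{\rm ep}=\kappa_P\sqrt{1+4\sigma^2\|A\|^2}$. The constant depends only on $\tau,\sigma,A$, and is in particular uniform in $T$, the epoch index, the inner solver and the error values.

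The only delicate point is justifying nonexpansiveness of $\mathcal T$ without any locality assumption. I would verify it directly: apply monotonicity of $\mathcal F_{\rm KKT}$ to the inclusions $P(z-\mathcal Tz)\in\mathcal F_{\rm KKT}(\mathcal Tz)$ and $P(z'-\mathcal Tz')\in\mathcal F_{\rm KKT}(\mathcal Tz')$. This monotonicity comes from $\partial F$ being monotone and the skew block structure. The result is $\|\mathcal Tz-\mathcal Tz'\|_P^2\le\langle z-z',\mathcal Tz-\mathcal Tz'\rangle_P$, and Cauchy--Schwarz then finishes. No feasibility of $\widetilde x^k$ in $K$ is needed for this argument, since $\mathcal T$ is defined on all of $E\times Y$.
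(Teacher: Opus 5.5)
Your argument is correct, but it takes a different route from the paper's proof. The paper works componentwise in the Euclidean norm. With $a_k=\|\widetilde x^k-x_e^k\|$ and $b_k=\|\widetilde y^k-y_e^k\|$, Euclidean nonexpansiveness of $\operatorname{prox}_{\tau F}$ gives $a_{k+1}\le a_k+\tau\|A\|b_k+\delta_k$, and the affine dual update gives $b_{k+1}\le b_k+\sigma\|A\|(2a_{k+1}+a_k)$. The resulting scalar recursion $h_{k+1}\le L_{\rm rec}h_k+c_\delta\delta_k$ is then unrolled over the $T$ steps.

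That argument is elementary: it uses only Lipschitz bounds on each piece, not the resolvent structure of $\mathcal T$. However, $L_{\rm rec}>1$ whenever $A\neq0$, so its constant is of order $L_{\rm rec}^{T}$. It is finite only because $T$ is fixed, which is exactly how the lemma is phrased.

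You instead use that $\mathcal T$ is the $P$-resolvent of the monotone KKT operator, hence nonexpansive in $\|\cdot\|_P$. The per-step errors therefore accumulate additively without amplification. This gives the explicit, $T$-independent constant $C_{\rm ep}=\kappa_P\sqrt{1+4\sigma^2\|A\|^2}$. Using the exact identity $\|(\Delta_k,-2\sigma A\Delta_k)\|_P=\tau^{-1/2}\delta_k$ instead gives $C_{\rm ep}=(\tau\lambda_-)^{-1/2}$.

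This is a real gain downstream. $C_{\rm ep}$ enters $q_\eta=q_0+C_{\rm ep}\eta$ and the inner-step requirement in Proposition~\ref{prop:fixed-pg-work}. There, a constant exponential in $T$ would force the number $J$ of projected-gradient steps to grow linearly in $T$. Your $T$-uniform bound is what is compatible with the paper's remark that $O(\log T)$ steps suffice, at least as far as $C_{\rm ep}$ is concerned.
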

\endgroup

\begin{proof}
Let $z_e^k=(x_e^k,y_e^k)$ be the exact orbit from the same initial point, and
set
\[
    a_k:=\|\widetilde x^k-x_e^k\|,
    \qquad
    b_k:=\|\widetilde y^k-y_e^k\|,
    \qquad
    a_0=b_0=0.
\]
The definition of $\delta_k$ and nonexpansiveness of the exact proximal map
give
\begin{equation}
\label{eq:theory-primal-perturbation}
    a_{k+1}
    \le
    a_k+\tau\|A\|b_k+\delta_k.
\end{equation}
The extrapolated-primal difference is bounded by
$2a_{k+1}+a_k$, and hence
\begin{equation}
\label{eq:theory-dual-perturbation}
    b_{k+1}
    \le
    b_k+\sigma\|A\|(2a_{k+1}+a_k).
\end{equation}
With $h_k=a_k+b_k$, these inequalities imply
\[
    h_{k+1}
    \le
    L_{\rm rec}h_k+c_\delta\delta_k
\]
for constants depending only on $\tau$, $\sigma$, and $\|A\|$.  Expanding
this recursion over the fixed number $T$ bounds every $h_k$, and therefore
the difference of the epoch averages, by a constant times
$\sum_{k=0}^{T-1}\delta_k$.
\end{proof}

\begin{proof}[Proof of Theorem~\ref{thm:inexact-local-linear}]
For each $n$, choose
$z_n^\star\in\Pi_{Z^\star}(z^{n,0})$ and define the exact shadow average
\[
    z_e^{n+1}
    :=
    \mathcal E_T(z^{n,0}).
\]
When $z_e^{n+1}$ and $z_n^\star$ lie in the quadratic-growth neighborhood,
Assumption~\ref{ass:smoothed-qg} and
Proposition~\ref{prop:one-epoch-estimates} give
\[
    \operatorname{dist}^2(z_e^{n+1},Z^\star) \le \alpha_\xi^{-1}G_\xi(z_e^{n+1};z_n^\star) \le \frac{\lambda_+}{\alpha_\xi T} \|z^{n,0}-z_n^\star\|^2 = q_0^2d_n^2.
\]
Consequently, the triangle inequality and
Lemma~\ref{lem:theory-epoch-perturbation} yield
\[
    d_{n+1} \le \|z^{n+1,0}-z_e^{n+1}\| + \operatorname{dist}(z_e^{n+1},Z^\star) \le C_{\rm ep}\varepsilon_n+q_0d_n,
\]
so, whenever the local estimate is applicable,
\begin{equation}
\label{eq:theory-inexact-recurrence}
    d_{n+1}
    \le
    q_0d_n+C_{\rm ep}\varepsilon_n.
\end{equation}

We next prove local retention under
\eqref{eq:general-relative-error}--\eqref{eq:inexact-retention-ball}.  At
$n=0$, the rate bound is an equality and the displacement bound below holds
because its defining sum is empty.  Suppose inductively that
$d_t\le q_\eta^td_0$ through index $n$ and that
\[
    \|z^{n,0}-z^{0,0}\|
    \le
    \beta_\eta\sum_{t=0}^{n-1}d_t.
\]
The center and the exact shadow average satisfy
\[
    \|z_n^\star-z^{0,0}\|
    \le
    \beta_\eta\sum_{t=0}^{n-1}d_t+d_n,
\]
and
\[
    \|z_e^{n+1}-z^{0,0}\|
    \le
    \beta_\eta\sum_{t=0}^{n-1}d_t
    +(1+\kappa_P)d_n.
\]
Here the second estimate uses
Proposition~\ref{prop:one-epoch-estimates}.  Since
$\beta_\eta\ge1+\kappa_P$, both right-hand sides are at most
\[
    \beta_\eta\sum_{t=0}^{n}q_\eta^td_0
    \le
    R_\eta.
\]
Thus \eqref{eq:inexact-retention-ball} permits the use of
\eqref{eq:theory-inexact-recurrence}, which gives
\[
    d_{n+1}
    \le
    (q_0+C_{\rm ep}\eta)d_n
    =
    q_\eta d_n.
\]
Furthermore,
\[
    \|z^{n+1,0}-z^{n,0}\| \le \|z^{n+1,0}-z_e^{n+1}\| + \|z_e^{n+1}-z_n^\star\| + \|z_n^\star-z^{n,0}\| \le C_{\rm ep}\varepsilon_n+(1+\kappa_P)d_n \le \beta_\eta d_n.
\]
Adding this estimate to the preceding displacement bound gives
\[
    \|z^{n+1,0}-z^{0,0}\|
    \le
    \beta_\eta\sum_{t=0}^{n}d_t.
\]
This closes the retention argument and proves
\eqref{eq:general-relative-rate}.
\end{proof}

\subsection{Proof of Proposition~\ref{prop:fixed-pg-work}}
\label{app:proof-fixed-pg-work}

\begin{proof}
For a proximal input $v$, let $u^0\in K$ and define the projected-gradient
iteration
\[
    u^{j+1}
    =
    \Pi_K\left(
        u^j-\eta_{\rm PG}
        [Qu^j+c+\tau^{-1}(u^j-v)]
    \right).
\]
Projection onto $K$ is nonexpansive and
\[
    \|I-\eta_{\rm PG}H_\tau\|
    =
    \max_{\lambda\in[\mu_\tau,L_\tau]}
    |1-\eta_{\rm PG}\lambda|
    =
    \rho_{\rm PG}.
\]
The exact proximal point is the unique fixed point of the PG map, and hence
\begin{equation}
\label{eq:theory-pg-contraction}
    \|u^J-p(v)\|
    \le
    \rho_{\rm PG}^J\|u^0-p(v)\|.
\end{equation}
If $T_v^J(u)$ denotes $J$ such steps, the same nonexpansiveness argument
gives
\begin{equation}
\label{eq:theory-pg-stability}
    \|T_v^J(u)-T_{v'}^J(u')\| \le \rho_{\rm PG}^J\|u-u'\| + \eta_{\rm PG}\tau^{-1} \frac{1-\rho_{\rm PG}^J}{1-\rho_{\rm PG}} \|v-v'\|.
\end{equation}

The difference between consecutive PG iterates also supplies a computable
stationarity certificate.  Let
$\Delta u^j:=u^{j+1}-u^j$ and define
\[
    r^{j+1}
    :=
    \nabla H_v(u^{j+1})
    +
    \eta_{\rm PG}^{-1}(u^j-u^{j+1})
    -
    \nabla H_v(u^j).
\]
The projection step implies
\[
    \eta_{\rm PG}^{-1}(u^j-u^{j+1})
    -
    \nabla H_v(u^j)
    \in
    N_K(u^{j+1}),
\]
and therefore
\begin{equation}
\label{eq:pg-step-to-stationarity}
    r^{j+1}
    \in
    \nabla H_v(u^{j+1})+N_K(u^{j+1}),
    \qquad
    \|r^{j+1}\|
    \le
    (L_\tau+\eta_{\rm PG}^{-1})\|\Delta u^j\|.
\end{equation}
Combining this estimate with
\eqref{eq:general-residual-to-error} yields
\[
    \|u^{j+1}-p(v)\|
    \le
    \mu_\tau^{-1}\|r^{j+1}\|.
\]

It remains to control the warm start uniformly over a local epoch.  At every
KKT point the warm start is the exact proximal point, so the approximate PG
step fixes the KKT point.  The affine dual update and extrapolation are
Lipschitz, while \eqref{eq:theory-pg-stability} has a
parameter-Lipschitz constant bounded uniformly in $J$.  Induction over the $T$ steps of one epoch therefore gives
\[
    \|\widetilde z^{n,k}-z_n^\star\|
    \le
    C_{\rm in}d_n,
    \qquad
    z_n^\star\in\Pi_{Z^\star}(z^{n,0}).
\]
Nonexpansiveness of the exact proximal map then yields
\[
    \|\widetilde x^{n,k}-p_{\rm sh}^{n,k+1}\|
    \le
    (2+\tau\|A\|)C_{\rm in}d_n
    =:
    C_wd_n.
\]
Applying \eqref{eq:theory-pg-contraction} to each proximal subproblem gives
$\delta_{n,k}\le C_w\rho_{\rm PG}^Jd_n$.  Summing over the $T$ inner steps
proves
\[
    \varepsilon_n
    \le
    T C_w\rho_{\rm PG}^Jd_n,
\]
which is \eqref{eq:theory-fixed-j-error}.  Theorem
\ref{thm:inexact-local-linear} with
$\eta=T C_w\rho_{\rm PG}^J$ has contraction factor
\[
    q_J
    :=
    q_0+C_{\rm ep}T C_w\rho_{\rm PG}^J.
\]
Thus $q_J<1$ gives local Q-linear convergence.  For
$0<\rho_{\rm PG}<1$, the explicit sufficient condition
\begin{equation}
\label{eq:theory-fixed-j-choice}
    J
    >
    \frac{
        \log\!\left(C_{\rm ep}T C_w/(1-q_0)\right)
    }{
        -\log\rho_{\rm PG}
    }
\end{equation}
ensures $q_J<1$.  If $\rho_{\rm PG}=0$, one PG step is exact.
\end{proof}

\section{Proofs in Section~\ref{sec:conic-geometry}}
\label{app:sec3-proofs}

\subsection{Proof of Lemma~\ref{lem:kkt-solution-geometry}}
\label{app:proof-kkt-solution-geometry}

\begin{proof}
Take $x_1^\star,x_2^\star\in X^\star$ and set
$d=x_1^\star-x_2^\star$.  Since the feasible set is convex and both
points are optimal, the objective is constant on the segment joining them.
The second directional derivative of $f$ along this segment is
$\langle d,Qd\rangle$, and hence $\langle d,Qd\rangle=0$.  Because
$Q\succeq0$, one has $Q^{1/2}d=0$ and therefore $Qd=0$, proving
that $Qx_1^\star=Qx_2^\star$.

Take two KKT pairs $(x_1^\star,y_1^\star)$ and
$(x_2^\star,y_2^\star)$.  The invariance just established gives
\[
    s(x_2^\star,y_1^\star)
    =
    Qx_2^\star+c-A^*y_1^\star
    =
    s(x_1^\star,y_1^\star)\in K^*.
\]
Let $d=x_2^\star-x_1^\star$.  Feasibility gives $Ad=0$, while equality of
the two objective values and $Qd=0$ give
\[
    0 = f(x_2^\star)-f(x_1^\star)= \langle Qx_1^\star+c,d\rangle = \langle s(x_1^\star,y_1^\star),d\rangle .
\]
Together with complementarity at $x_1^\star$, this proves complementarity
at $x_2^\star$.  Hence $(x_2^\star,y_1^\star)$ is also a KKT pair.
Cross-pairing follows, and the distance identity follows from the Euclidean
product norm.

The same invariance makes $\mathcal D_Q$ independent of
$x^\star$.  Every optimal slack belongs to the intersection in
\eqref{eq:optimal-slack-intersection}.  Conversely, if
$s=Qx^\star+c-A^*y$ belongs to this intersection, then primal feasibility,
dual feasibility, and complementarity hold.  Thus $(x^\star,y)$ is a KKT
pair and $s$ is an optimal slack.
\end{proof}

\subsection{Proof of Theorem~\ref{thm:qg}}
\label{app:proof-qg}

\begin{proof}
We first derive the component identities used in both directions.  Insert
and subtract $\mathcal L(x^\star,y^\star)$ in
\eqref{eq:smoothed-gap}.  The maximizations over $\widehat y$ and
$\widehat x$ separate, and the unused part of each maximization has value
zero at the corresponding component of $z^\star$.  Hence
\begin{equation}
\label{eq:gap-separation}
    G_\xi(z;z^\star)
    =
    P_\xi(x;z^\star)+D_\xi(y;z^\star).
\end{equation}
Completing the square in $\widehat y$ gives
\[
    P_\xi(x;z^\star)
    =
    f(x)-f(x^\star)
    +\langle y^\star,b-Ax\rangle
    +\frac1{2\xi}\|Ax-b\|^2.
\]
Using $Ax^\star=b$, $s^\star=Qx^\star+c-A^*y^\star$, and
$\langle x^\star,s^\star\rangle=0$, we obtain
\begin{equation}
\label{eq:primal-smoothed-gap}
    P_\xi(x;z^\star)
    =
    E_P(x;z^\star)+\frac1{2\xi}\|Ax-b\|^2.
\end{equation}

For the dual component, writing $\widehat x=x^\star+d$ yields
\[
    D_\xi(y;z^\star)
    =
    \sup_{x^\star+d\in K}
    \left\{
        -\langle s^\star(y),d\rangle
        -\frac12\|d\|_{M_\xi}^2
    \right\}.
\]
The unique maximizer is $u_y$, and therefore
\begin{equation}
\label{eq:dual-gap-exact}
    D_\xi(y;z^\star)
    =
    \langle s^\star(y),r_y\rangle
    -\frac12\|r_y\|_{M_\xi}^2.
\end{equation}
Weighted projection optimality gives
\[
    M_\xi r_y-s^\star(y)\in N_K(u_y).
\]
Testing this normal-cone relation with $x^\star\in K$ shows that
$\langle s^\star(y),r_y\rangle\ge\|r_y\|_{M_\xi}^2$, and hence
\begin{equation}
\label{eq:dual-gap-lower-bound}
    D_\xi(y;z^\star)
    \ge
    \frac12\|r_y\|_{M_\xi}^2.
\end{equation}
Moreover, $D_\xi(\cdot;z^\star)$ is convex and differentiable.  Danskin's
theorem and the uniqueness of $u_y$ give
\begin{equation}
\label{eq:dual-gap-gradient}
    \nabla D_\xi(y;z^\star)
    =
    A(u_y-x^\star)
    =
    -Ar_y.
\end{equation}
The same projection condition identifies the zero set of the residual:
\[
    R_\xi(y;z^\star)=0 \Longleftrightarrow -s^\star(y)\in N_K(x^\star) \Longleftrightarrow s^\star(y)\in\mathcal D_Q\cap\mathcal F_{x^\star} =\mathcal S^\star \Longleftrightarrow y\in Y^\star.
\]
In particular, \eqref{eq:dual-gap-exact} gives
$D_\xi(y;z^\star)=0$ for every $y\in Y^\star$.

Suppose first that the two uniform local error bounds hold.  Choose witness
neighborhoods $V_P,V_x,V_D,V_y$ from
Definitions~\ref{def:primal-eb} and \ref{def:dual-eb}, and choose $V$
contained in $V_P\cap V_D\cap(V_x\times V_y)$, shrinking it if necessary so
that both bounds apply to every $z\in(K\times Y)\cap V$ and every
$z^\star\in Z^\star\cap V$.  From \eqref{eq:primal-eb} and
\eqref{eq:primal-smoothed-gap},
\[
    P_\xi(x;z^\star)
    \ge
    \min\left\{\frac1{a_P},\frac1{2\xi b_P}\right\}
    \operatorname{dist}^2(x,X^\star).
\]
Similarly, \eqref{eq:dual-eb} and \eqref{eq:dual-gap-lower-bound} give
\[
    D_\xi(y;z^\star)
    \ge
    \frac1{2\kappa_D^2}
    \operatorname{dist}^2(y,Y^\star).
\]
Thus \eqref{eq:gap-separation} and \eqref{eq:solution-product} prove
\eqref{eq:uniform-gap-qg} with
\[
    \alpha_\xi
    =
    \min\left\{
        \frac1{a_P},
        \frac1{2\xi b_P},
        \frac1{2\kappa_D^2}
    \right\}.
\]

Conversely, suppose \eqref{eq:uniform-gap-qg} holds on a neighborhood
$V_Q$ with constant $\alpha_\xi>0$.  Choose product neighborhoods $U_x$ of
$\bar x$ and $U_y$ of $\bar y$ such that $U_x\times U_y\subseteq V_Q$, and
restrict the KKT centers to $Z^\star\cap(U_x\times U_y)$.  For
$x\in K\cap U_x$, apply \eqref{eq:uniform-gap-qg} to
$z=(x,y^\star)$.  By \eqref{eq:solution-product},
\eqref{eq:primal-smoothed-gap}, and the definition of $P_\xi$,
\[
    \operatorname{dist}^2(x,X^\star) \le \frac1{\alpha_\xi}P_\xi(x;z^\star) = \frac1{\alpha_\xi}E_P(x;z^\star) +\frac1{2\xi\alpha_\xi}\|Ax-b\|^2.
\]
This is the uniform local primal error bound with
$a_P=\alpha_\xi^{-1}$ and $b_P=(2\xi\alpha_\xi)^{-1}$.

For $y\in Y\cap U_y$, apply \eqref{eq:uniform-gap-qg} to
$z=(x^\star,y)$ and let $\widehat y\in\Pi_{Y^\star}(y)$.  By
Lemma~\ref{lem:kkt-solution-geometry}, $(x^\star,\widehat y)$ is a KKT pair, and the
zero-set characterization above gives
$D_\xi(\widehat y;z^\star)=0$.  Convexity and
\eqref{eq:dual-gap-gradient} therefore yield
\[
    \alpha_\xi\operatorname{dist}^2(y,Y^\star) \le D_\xi(y;z^\star) \le \langle\nabla D_\xi(y;z^\star),y-\widehat y\rangle \le \|A M_\xi^{-1/2}\|\, \|r_y\|_{M_\xi}\operatorname{dist}(y,Y^\star).
\]
After dividing when the distance is nonzero, we obtain the uniform local
dual error bound with any positive constant satisfying
\[
    \kappa_D
    \ge
    \frac{\|A M_\xi^{-1/2}\|}{\alpha_\xi}.
\]
When the distance is zero the bound is immediate.  This proves the reverse
implication.  The center neighborhood $U_x\times U_y$, together with $U_x$
and $U_y$ for the primal and dual variables, provides the common witness
neighborhoods required in Definitions~\ref{def:primal-eb} and
\ref{def:dual-eb}.
\end{proof}

\subsection{Proof of Proposition~\ref{prop:lifted-eb}}
\label{app:proof-lifted-eb}

\begin{proof}
The lifted cone $K\times\mathcal Q_r$ is again a finite product of the
listed classes, and the canonical certificates constructed above exhibit
lifted KKT points, so the lifted problem \eqref{eq:soc-lifting} satisfies
the standing strong-duality hypotheses of \cite{ding2023strict}.  By the
transfer of strict complementarity, P-SC of every center in the stratum
becomes lifted P-SC of its canonical certificate, which is the
strict-complementarity hypothesis of \cite[Corollary~1]{ding2023strict}.

Fix $z^\star\in Z^\star\cap V_{\mathrm{SC}}$.  Remark~4 of
\cite{ding2023strict} assembles the violation-of-complementarity function
of the product cone $K\times\mathcal Q_r$ as the sum of the blockwise
functions of their Lemma~3; each summand is linear or of square-root type
in $\langle\zeta^\star,\chi\rangle$, with coefficients determined by the
blockwise spectra of the components of $\zeta^\star$ (smallest nonzero
entries or eigenvalues; the RSOC block enters through
$p^\star=(1,u^\star,-Bx^\star)$, whose norm is at least one).  Remark~1
bounds the norm arguments of these functions on the bounded neighborhood
$\mathcal N$, and \cite[Corollary~1]{ding2023strict} then yields, for every
$\chi\in(K\times\mathcal Q_r)\cap\mathcal N$,
\[
    \operatorname{dist}(\chi,\mathcal X_L^\star)
    \le
    c_1(z^\star)\langle\zeta^\star,\chi\rangle
    +c_2(z^\star)\langle\zeta^\star,\chi\rangle^{1/2}
    +c_3(z^\star)\|\mathcal A_L\chi-\widetilde b\|.
\]
Since $\langle\zeta^\star,\chi\rangle$ is bounded on $\mathcal N$, the
linear term is absorbed into the square-root term, which is
\eqref{eq:lifted-residual-error-bound} at the fixed center.

It remains to choose the constants uniformly.  The mirror image of the
cross-complementarity argument of
Corollary~\ref{cor:dual-sc-slack-regularity} shows that P-SC at every
center fixes the primal complementary faces $K\cap(s^\star)^\perp$ along
the stratum, and with them the blockwise supports, boundary rays, and
ranks of the slacks $s^\star$.  The blockwise smallest nonzero entries and
eigenvalues therefore vary continuously with $z^\star$ and remain bounded
below after shrinking $V_{\mathrm{SC}}$, while the complementary
subspaces entering $c_3$, the map $\mathcal A_L$, and the norm bound on
$\mathcal N$ are fixed.  Hence
$\sup_{z^\star\in Z^\star\cap V_{\mathrm{SC}}}c_i(z^\star)<\infty$ for
$i=1,2,3$, and the constants in
\eqref{eq:lifted-residual-error-bound} can be chosen uniformly.
\end{proof}

\subsection{Proof of Corollary~\ref{cor:primal-eb-strict-complementarity}}
\label{app:proof-primal-eb-strict-complementarity}

\begin{proof}
Lemma~\ref{lem:kkt-solution-geometry} and $Q=B^*B$ imply that $Bx^\star$
is constant on $X^\star$.  Denote this value by $\bar w$ and set
$\bar u:=\frac12\|\bar w\|^2$.  Tightness of the lifted epigraph gives
\begin{equation}
\label{eq:lifted-solution-set}
    \mathcal X_L^\star
    =
    X^\star\times\{(\bar u,1,\bar w)\},
\end{equation}
and hence
\begin{equation}
\label{eq:lifted-distance-projection}
    \operatorname{dist}^2(\widehat\chi(x),\mathcal X_L^\star)
    =
    \operatorname{dist}^2(x,X^\star)
    +\|q(x)-(\bar u,1,\bar w)\|^2
    \ge
    \operatorname{dist}^2(x,X^\star).
\end{equation}

The map $\widehat\chi$ is continuous, so
$V_x:=\widehat\chi^{-1}(\mathcal N)$ is a common neighborhood of $\bar x$.
For $x\in K\cap V_x$, the lifted point is cone feasible and
\[
    \mathcal A_L\widehat\chi(x)-\widetilde b=(Ax-b,0,0).
\]
For every $x\in K$,
\begin{equation}
\label{eq:objective-complementarity}
    \langle p^\star,q(x)\rangle = \frac12\|Bx\|^2+\frac12\|Bx^\star\|^2 -\langle Bx^\star,Bx\rangle = \frac12\|x-x^\star\|_Q^2,
\end{equation}
and hence
\begin{equation}
\label{eq:lifted-complementarity-ep}
    \langle\zeta^\star,\widehat\chi(x)\rangle
    =
    \langle s^\star,x\rangle+\langle p^\star,q(x)\rangle
    =
    E_P(x;z^\star).
\end{equation}
Combining \eqref{eq:lifted-complementarity-ep},
\eqref{eq:lifted-distance-projection}, and
\eqref{eq:lifted-residual-error-bound} gives
\[
    \operatorname{dist}(x,X^\star)
    \le
    \kappa_L\sqrt{E_P(x;z^\star)}
    +\gamma_L\|Ax-b\|.
\]
Squaring and using $(a+b)^2\le2a^2+2b^2$ yields
\[
    \operatorname{dist}^2(x,X^\star)
    \le
    2\kappa_L^2E_P(x;z^\star)
    +2\gamma_L^2\|Ax-b\|^2.
\]
Taking $V_P:=V_{\mathrm{SC}}$ and $V_x$ as above proves the stated
constants uniformly over the local stratum.
\end{proof}

\subsection{Proof of Corollary~\ref{cor:dual-sc-slack-regularity}}
\label{app:proof-dual-sc-slack-regularity}

\begin{proof}
Every optimal slack lies in the affine set $\mathcal D_Q$, whose relative
interior is itself.  Thus \eqref{eq:dual-side-strict-complementarity} gives
$\operatorname{ri}\mathcal D_Q\cap
\operatorname{ri}\mathcal F_{x^\star}\neq\emptyset$, which implies bounded,
and hence local, linear regularity \cite{bauschke1999strong}.

For the uniform claim, take two primal centers $x_1^\star,x_2^\star$ and
D-SC optimal slacks
$s_i^\star\in\operatorname{ri}\mathcal F_{x_i^\star}$.  The
cross-complementarity in Lemma~\ref{lem:kkt-solution-geometry} gives
$s_1^\star\in\mathcal F_{x_2^\star}$ and
$s_2^\star\in\mathcal F_{x_1^\star}$.  Since each slack is a
relative-interior point of its face, these inclusions imply
\[
    \mathcal F_{x_1^\star}\subseteq\mathcal F_{x_2^\star},
    \qquad
    \mathcal F_{x_2^\star}\subseteq\mathcal F_{x_1^\star}.
\]
Hence all complementary faces on the stratum coincide with a fixed face
$\mathcal F$.  Bounded linear regularity of the fixed pair
$\{\mathcal D_Q,\mathcal F\}$ supplies common $W_A$ and $\kappa_A$, proving
the uniform claim.
\end{proof}

\subsection{Proof of Proposition~\ref{prop:uniform-normal-calmness}}
\label{app:proof-uniform-normal-calmness}

\begin{proof}
The graph of $N_K$ is the product of the blockwise graphs and squared
distances add across blocks, so it suffices to produce common neighborhoods
and moduli for each block; the product then takes the smallest radius and
the largest modulus.  For a polyhedral block,
$\operatorname{gph}N_{K_j}$ is a finite union of polyhedra, so the
upper-Lipschitz modulus of the polyhedral multifunction $N_{K_j}$ is
determined by its finitely many affine pieces and is valid, with a common
neighborhood, at every reference point \cite{robinson1981continuity}.

For the remaining blocks, the target set does not move: for a cone,
$N_{K_j}(x_j^\star)
=-\bigl(K_j^*\cap(x_j^\star)^\perp\bigr)
=-\mathcal F_{x_j^\star}$
depends on $x_j^\star$ only through its complementary face, and the
cross-complementarity argument of
Corollary~\ref{cor:dual-sc-slack-regularity} shows that blockwise D-SC
fixes these faces along the stratum: all local centers share one face
$\mathcal F_j$ per block.  Blocks with $\bar x_j$ interior to $K_j$, or
with $\bar s_j$ interior to $K_j^*$, are trivial: in the first case nearby
graph points satisfy $v_j=0\in N_{K_j}(x_j^\star)$, and in the second case
$x_j^\star=0$, so
$v_j\in N_{K_j}(u_j)\subseteq-K_j^*=N_{K_j}(x_j^\star)$; in both cases the
left-hand side of \eqref{eq:normal-calmness} vanishes.

Consider a PSD block on the boundary.  Constancy of $\mathcal F_j$ fixes
$\ker x_j^\star$ and hence the rank, so the smallest nonzero eigenvalue
$\lambda^+_{\min}(x_j^\star)$ is bounded below by some $\lambda_j>0$ after
shrinking the stratum neighborhood.  A graph point $(u_j,v_j)$ satisfies
$u_j\succeq0$, $v_j\preceq0$, and $u_jv_j=0$.  Decompose both matrices in
blocks adapted to
$\operatorname{range}(x_j^\star)\oplus\ker(x_j^\star)$ and let
$v_j^{11},v_j^{12},v_j^{22}$ denote the blocks of $v_j$.  Zeroing
$v_j^{11}$ and $v_j^{12}$ produces a matrix of $-\mathcal F_j$, so
$\operatorname{dist}(v_j,-\mathcal F_j)\le\|v_j^{11}\|+2\|v_j^{12}\|$.  For
$\|u_j-x_j^\star\|\le\lambda_j/2$, the restriction of $u_j$ to
$\operatorname{range}(x_j^\star)$ is invertible with inverse bounded by
$2/\lambda_j$, and the first block row of the identity $u_jv_j=0$
give
\[
    \|v_j^{11}\|+2\|v_j^{12}\|
    \le
    \frac{C\|v_j\|}{\lambda_j}\,\|u_j-x_j^\star\|
\]
for an absolute constant $C$.  Since $\|v_j\|$ is bounded on the graph
neighborhood of $(\bar x_j,-\bar s_j)$, this is
\eqref{eq:normal-calmness} with a modulus depending only on $\lambda_j$
and the neighborhood.  A boundary SOC block is the same computation with
$\lambda^+_{\min}(x_j^\star)$ replaced by the distance of $x_j^\star$ from
the cone vertex, which is bounded below because the fixed face pins the
boundary ray carrying $x_j^\star$; a rotated second-order cone reduces to
this case through a linear isometry.  Taking the smallest blockwise radius
and the largest blockwise modulus completes the proof.
\end{proof}

\subsection{Proof of Proposition~\ref{prop:dual-eb}}
\label{app:proof-dual-eb}

\begin{proof}
For the projection point and residual defined above, recall that
$v_y=M_\xi r_y-s^\star(y)$.  The maps
$(z^\star,y)\mapsto s^\star(y)$ and
$(z^\star,y)\mapsto(u_y,v_y)$ are jointly continuous and, at $y=y^\star$,
equal $s^\star$ and $(x^\star,-s^\star)$, respectively.  After shrinking
common neighborhoods $V_D$ of $\bar z$ and $V_y$ of $\bar y$, both
assumptions therefore apply to every
$z^\star\in Z^\star\cap V_D$ and $y\in Y\cap V_y$.

Weighted projection optimality gives
$v_y\in N_K(u_y)$.  Since
$N_K(x^\star)=-\mathcal F_{x^\star}$,
\[
    \operatorname{dist}(s^\star(y),\mathcal F_{x^\star}) = \operatorname{dist}(-s^\star(y),N_K(x^\star)) \le \|M_\xi r_y\|+\kappa_B\|r_y\| \le \frac{L_\xi+\kappa_B}{\sqrt{\lambda_\xi}} \|r_y\|_{M_\xi}.
\]

The vector $s^\star(y)$ already lies in $\mathcal D_Q$.  Therefore
Assumption~\ref{ass:slack-regularity} gives
\[
    \operatorname{dist}(s^\star(y),\mathcal S^\star)
    \le
    \kappa_A
    \frac{L_\xi+\kappa_B}{\sqrt{\lambda_\xi}}
    \|r_y\|_{M_\xi}.
\]
Let $\widehat s\in\Pi_{\mathcal S^\star}(s^\star(y))$.  Both slacks belong to
$\mathcal D_Q$, so their difference lies in $\operatorname{range}A^*$.
Define
\[
    d:=(A^*)^\dagger(s^\star(y)-\widehat s),
    \qquad
    \widehat y:=y+d.
\]
Then $s^\star(\widehat y)=\widehat s$, and hence
$\widehat y\in Y^\star$.  Consequently,
\[
    \operatorname{dist}(y,Y^\star)
    \le
    \|d\|
    \le
    \|(A^*)^\dagger\|
    \operatorname{dist}(s^\star(y),\mathcal S^\star),
\]
which proves \eqref{eq:dual-eb}.
\end{proof}

\subsection{Proof of Corollary~\ref{cor:common-cone-qg}}
\label{app:proof-common-cone-qg}

\begin{proof}
The listed blocks are symmetric cones, up to the linear isometry carrying
rotated Lorentz cones onto Lorentz cones, so P-SC and D-SC are each
equivalent, blockwise, to the displayed condition
$x^\star+s^\star\in\operatorname{int}K$
\cite{chua2008invariance,faraut1994analysis}.  Hence every local KKT
center satisfies P-SC, and every local primal center admits a blockwise
D-SC optimal slack: by Lemma~\ref{lem:kkt-solution-geometry}, each
$x^\star\in X^\star$ near $\bar x$ pairs with $\bar y$ to form a KKT
center in the stratum, whose optimal slack is then D-SC.  The first
property and Proposition~\ref{prop:lifted-eb} supply the lifted residual
error bound of Assumption~\ref{ass:lifted-residual-eb}, and
Corollary~\ref{cor:primal-eb-strict-complementarity} converts it into the
uniform primal error bound with $a_P=2\kappa_L^2$ and
$b_P=2\gamma_L^2$.  The second property,
Corollary~\ref{cor:dual-sc-slack-regularity}, and
Proposition~\ref{prop:uniform-normal-calmness} verify
Assumptions~\ref{ass:slack-regularity} and \ref{ass:normal-calmness}.
Proposition~\ref{prop:dual-eb} then gives the
uniform dual error bound, and Theorem~\ref{thm:qg} proves the asserted
growth estimate.  Substituting the primal constants and the value of
$\kappa_D$ from Proposition~\ref{prop:dual-eb} gives
\eqref{eq:common-cone-qg-constant}.
\end{proof}

\section{Cone Projections under the Euclidean Norm and the
\texorpdfstring{$M$}{M}-Norm}
\label{app:cone-projections}

This section gives the projections under the Euclidean norm and the $M$-norm
used in Section~\ref{sec:cone-projections}.  Let
$M=\operatorname{diag}(m)\succ0$.
For any closed convex cone $\mathcal K$, the membership conditions
\[
    r\in\mathcal K
    \qquad\text{and}\qquad
    -Mr\in\mathcal K^*
\]
give $\Pi_{\mathcal K}^M(r)=r$ and
$\Pi_{\mathcal K}^M(r)=0$, respectively.  In all remaining cases, the
projected point lies on the boundary of $\mathcal K$.
For a diagonal metric, the change of variables $w=M^{1/2}u$ gives
\begin{equation}
\label{eq:metric-rescaled-cone-equivalence}
    \Pi_{\mathcal K}^{M}(r)
    =M^{-1/2}\Pi_{M^{1/2}\mathcal K}(M^{1/2}r).
\end{equation}
Consequently, the $M$-norm projections onto the second-order and exponential
cones are precisely the Euclidean projections onto the diagonally rescaled
cones analyzed in PDCS~\cite{lin2025pdcs}. We record their formulas below in
the original cone coordinates. For the power cone, we derive the Euclidean
and $M$-norm scalar equations and their bisection procedure explicitly.

\begin{table}[H]
    \centering
    \papertablesetup
    \caption{Projection methods for the box and cone blocks under the
    Euclidean norm and the $M$-norm.}
    \label{tab:cone-projection-methods}
    \medskip
    \begin{tabular*}{\textwidth}{@{\extracolsep{\fill}}lcc@{}}
        \toprule
        Set or cone
        & \shortstack{Euclidean\\projection}
        & \shortstack{$M$-norm\\projection} \\
        \midrule
        Box constraints              & Closed form & Closed form \\
        Nonnegative cone             & Closed form & Closed form \\
        Second-order cone            & Closed form & Scalar Root-finding \\
        Rotated second-order cone    & Closed form & Scalar Root-finding \\
        Exponential cone             & Scalar Root-finding & Scalar Root-finding \\
        Three-dimensional power cone & Scalar Root-finding & Scalar Root-finding \\
        \bottomrule
    \end{tabular*}
\end{table}

\subsection{Box and Nonnegative Cone}

The Euclidean and $M$-norm projection problems both separate by coordinate
and have the same solution:
\begin{equation}
\label{eq:box-nonnegative-metric-projection}
    [\Pi_{\mathbb R_+^d}^{M}(r)]_i=\max\{r_i,0\},
    \qquad
    [\Pi_{[\ell,u]}^{M}(r)]_i
    =\min\{u_i,\max\{\ell_i,r_i\}\}.
\end{equation}
Thus their $M$-norm and Euclidean projections coincide.

\subsection{Second-Order Cone}

\paragraph{Euclidean projection.}
For $r=(r_v,r_t)\in\mathbb R^d\times\mathbb R$, let
$a:=\|r_v\|_2$.  The Euclidean projection is
\begin{equation}
\label{eq:soc-euclidean-projection}
    \Pi_{\mathcal K_{\mathrm{soc}}}(r_v,r_t)
    =
    \begin{cases}
        (r_v,r_t), & a\le r_t,\\
        0, & a\le-r_t,\\
        \displaystyle
        \left(\frac{a+r_t}{2a}r_v,\frac{a+r_t}{2}\right),
        & \text{otherwise}.
\end{cases}
\end{equation}

\paragraph{\texorpdfstring{$M$}{M}-norm projection.}
For $M=\operatorname{diag}(m_{v,1},\ldots,m_{v,d},m_t)$, the nontrivial
boundary case is determined by a scalar $\lambda\ge0$. In the original cone
coordinates, the rescaled-cone characterization of PDCS~\cite{lin2025pdcs}
becomes
\begin{equation}
\label{eq:soc-metric-projection}
    v_i(\lambda)
    =\frac{m_{v,i}(r_v)_i}{m_{v,i}+\lambda},
    \qquad
    t(\lambda)=\frac{m_tr_t}{m_t-\lambda},
    \qquad
    \|v(\lambda)\|_2^2-t(\lambda)^2=0.
\end{equation}
If $r_t>0$, the admissible root lies in $(0,m_t)$; if $r_t<0$, it lies in
$(m_t,\infty)$.  When $r_t=0$, the singular case $\lambda=m_t$ gives
$v_i=m_{v,i}(r_v)_i/(m_{v,i}+m_t)$ and $t=\|v\|_2$.  The scalar equation is
solved by bisection; the endpoint signs and the resulting bisection intervals
are established in~\cite{lin2025pdcs}.

\subsection{Rotated Second-Order Cone}

\paragraph{Euclidean projection.}
Define the orthogonal map
\[
    U(v,s,t)
    :=
    \left(\left(v,\frac{s-t}{\sqrt2}\right),
          \frac{s+t}{\sqrt2}\right).
\]
It maps $\mathcal K_{\mathrm{rsoc}}$ onto a second-order cone, and hence
\begin{equation}
\label{eq:rsoc-via-soc}
    \Pi_{\mathcal K_{\mathrm{rsoc}}}(r)
    =U^{-1}\Pi_{\mathcal K_{\mathrm{soc}}}(Ur).
\end{equation}

\paragraph{\texorpdfstring{$M$}{M}-norm projection.}
For $r=(r_v,r_s,r_t)$ and
$M=\operatorname{diag}(m_{v,1},\ldots,m_{v,d},m_s,m_t)$, a nonzero boundary
projection is determined by $\xi\ge0$ through
\begin{equation}
\label{eq:rsoc-metric-projection}
\begin{aligned}
    v_i(\xi)
    &=\frac{m_{v,i}(r_v)_i}{m_{v,i}+2\xi},\\
    \begin{bmatrix}m_s&-2\xi\\-2\xi&m_t\end{bmatrix}
    \begin{bmatrix}s(\xi)\\t(\xi)\end{bmatrix}
    &=
    \begin{bmatrix}m_sr_s\\m_tr_t\end{bmatrix},\\
    0&=\|v(\xi)\|_2^2-2s(\xi)t(\xi).
\end{aligned}
\end{equation}
The $2\times2$ system is singular at
$2\xi=q:=\sqrt{m_sm_t}$.  When its right-hand side is consistent, namely,
$\sqrt{m_s}r_s+\sqrt{m_t}r_t=0$, the singular solution is obtained explicitly.
Define
\[
    \delta:=\sqrt{m_s}r_s=-\sqrt{m_t}r_t,
    \qquad
    v_i:=\frac{m_{v,i}(r_v)_i}{m_{v,i}+q},
    \qquad
    \chi:=\sqrt{\delta^2+2q\|v\|_2^2}.
\]
Then
\begin{equation}
\label{eq:rsoc-metric-singular-projection}
    s=\frac{\delta+\chi}{2\sqrt{m_s}},
    \qquad
    t=\frac{-\delta+\chi}{2\sqrt{m_t}}
\end{equation}
satisfies $\|v\|_2^2=2st$ and also covers projections onto either coordinate ray.
For nonsingular candidates, the admissible root may lie on either side of
$q/2$.  After the membership and zero-projection tests, the implementation
brackets a sign change on an interval satisfying $s(\xi),t(\xi)\ge0$ and
applies bisection. Restricting the search to the lower branch can miss valid
projections.

\subsection{Exponential Cone}

Let $r=(r_0,s_0,t_0)$. In addition to the membership and dual-cone cases,
$r_0\le0$ and $s_0\le0$ give the same projection in both metrics:
\[
    \Pi_{\mathcal K_{\mathrm{exp}}}(r)
    =\Pi_{\mathcal K_{\mathrm{exp}}}^{M}(r)
    =(r_0,0,\max\{t_0,0\}).
\]

\paragraph{Euclidean projection.}
In the remaining case, a point on the smooth boundary is parameterized as
$\eta_I(\rho)(\rho,1,e^\rho)$ with $\eta_I(\rho)>0$, where
\begin{equation}
\label{eq:exp-euclidean-projection}
\begin{aligned}
    \eta_I(\rho)
    &:={
    \frac{\rho r_0+s_0+e^\rho t_0}
         {\rho^2+1+e^{2\rho}}},\\
    h_I(\rho)
    &:={\rho\eta_I(\rho)-r_0
      +e^\rho\bigl(e^\rho\eta_I(\rho)-t_0\bigr)}.
\end{aligned}
\end{equation}
The Euclidean projection is obtained by finding a root of $h_I$ and returning
$\eta_I(\rho)(\rho,1,e^\rho)$.

\paragraph{\texorpdfstring{$M$}{M}-norm projection.}
Let $M=\operatorname{diag}(m_r,m_s,m_t)$. For the smooth-boundary case, define
\begin{equation}
\label{eq:exp-metric-projection}
\begin{aligned}
    \eta_M(\rho)
    &:={
    \frac{m_r\rho r_0+m_ss_0+m_te^\rho t_0}
         {m_r\rho^2+m_s+m_te^{2\rho}}},\\
    h_M(\rho)
    &:={m_r\bigl(\rho\eta_M(\rho)-r_0\bigr)
      +m_te^\rho\bigl(e^\rho\eta_M(\rho)-t_0\bigr)}.
\end{aligned}
\end{equation}
The projection is obtained from an admissible root of $h_M$ satisfying
$\eta_M(\rho)>0$ and equals $\eta_M(\rho)(\rho,1,e^\rho)$. This is the
original-coordinate form of the diagonally rescaled exponential-cone
projection in PDCS~\cite{lin2025pdcs}, which gives an explicit admissible
interval and the corresponding bisection procedure. A safeguarded Newton step
can be used within the same bracket.

\subsection{Three-Dimensional Power Cone}

Let
\[
    \mathcal K_{\mathrm{pow}}^\alpha
    :=\{(x,y,z):x\ge0,\ y\ge0,\ x^\alpha y^{1-\alpha}\ge |z|\},
    \qquad \alpha\in(0,1).
\]
Its dual cone is
\[
    (\mathcal K_{\mathrm{pow}}^\alpha)^*
    =\left\{(p,q,w):p,q\ge0,\
    \left(\frac{p}{\alpha}\right)^\alpha
    \left(\frac{q}{1-\alpha}\right)^{1-\alpha}\ge |w|\right\}.
\]
Let $r=(r_x,r_y,r_z)$. If $r_z=0$, the Euclidean and $M$-norm projections
coincide:
\[
    \Pi_{\mathcal K_{\mathrm{pow}}^\alpha}(r)
    =\Pi_{\mathcal K_{\mathrm{pow}}^\alpha}^{M}(r)
    =\bigl(\max\{r_x,0\},\max\{r_y,0\},0\bigr).
\]
Set $R:=|r_z|$ and suppose below that $R>0$. The membership and
zero-projection tests are applied separately for each metric.

\paragraph{Euclidean projection.}
Suppose that $r\notin\mathcal K_{\mathrm{pow}}^\alpha$ and
$-r\notin(\mathcal K_{\mathrm{pow}}^\alpha)^*$.
The nonzero boundary point is parameterized by
$\rho=|u_z|\in(0,R)$ through
\begin{equation}
\label{eq:power-euclidean-projection}
\begin{aligned}
    x_I(\rho)
    &=\frac12\left(
        r_x+\sqrt{r_x^2+4\alpha\rho(R-\rho)}
      \right),\\
    y_I(\rho)
    &=\frac12\left(
        r_y+\sqrt{r_y^2+4(1-\alpha)\rho(R-\rho)}
      \right),\\
    g_I(\rho)
    &:=\alpha\log x_I(\rho)
      +(1-\alpha)\log y_I(\rho)-\log\rho.
\end{aligned}
\end{equation}
If $\rho_I^*$ is the zero of $g_I$ specified below, then
\begin{equation}
\label{eq:power-euclidean-projection-recovery}
    \Pi_{\mathcal K_{\mathrm{pow}}^\alpha}(r)
    =\bigl(x_I(\rho_I^*),y_I(\rho_I^*),
      \operatorname{sign}(r_z)\rho_I^*\bigr).
\end{equation}

\paragraph{\texorpdfstring{$M$}{M}-norm projection.}
For $M=\operatorname{diag}(m_x,m_y,m_z)$, suppose that
$r\notin\mathcal K_{\mathrm{pow}}^\alpha$ and
$-Mr\notin(\mathcal K_{\mathrm{pow}}^\alpha)^*$. The same parameterization gives
\begin{equation}
\label{eq:power-projection-root}
\begin{aligned}
    x_M(\rho)
    &=\frac12\left(
        r_x+\sqrt{r_x^2
        +4\alpha\frac{m_z}{m_x}\rho(R-\rho)}
      \right),\\
    y_M(\rho)
    &=\frac12\left(
        r_y+\sqrt{r_y^2
        +4(1-\alpha)\frac{m_z}{m_y}\rho(R-\rho)}
      \right),\\
    g_M(\rho)
    &:=\alpha\log x_M(\rho)
      +(1-\alpha)\log y_M(\rho)-\log\rho.
\end{aligned}
\end{equation}
For stable numerical evaluation of the positive quadratic roots, define
\begin{equation}
\label{eq:power-cone-stable-quadratic-root}
    q(s,c):=
    \begin{cases}
        \dfrac{s+\sqrt{s^2+4c}}{2}, & s\ge0,\\[6pt]
        \dfrac{2c}{\sqrt{s^2+4c}-s}, & s<0.
    \end{cases}
\end{equation}
Thus, for example,
$x_M(\rho)=q(r_x,\alpha(m_z/m_x)\rho(R-\rho))$, with analogous expressions
for $y_M$, $x_I$, and $y_I$. The second branch avoids cancellation when the
input coordinate is negative and $\rho$ is close to an endpoint.

If $\rho_M^*$ is the zero of $g_M$ specified below, then
\begin{equation}
\label{eq:power-metric-projection-recovery}
    \Pi_{\mathcal K_{\mathrm{pow}}^\alpha}^{M}(r)
    =\bigl(x_M(\rho_M^*),y_M(\rho_M^*),
      \operatorname{sign}(r_z)\rho_M^*\bigr).
\end{equation}

\begin{proposition}[Bisection for the power-cone projection]
\label{prop:power-cone-bisection}
Let $M=\operatorname{diag}(m_x,m_y,m_z)\succ0$, $R=|r_z|>0$,
$r\notin\mathcal K_{\mathrm{pow}}^\alpha$, and
$-Mr\notin(\mathcal K_{\mathrm{pow}}^\alpha)^*$. Then $g_M$ has a unique zero
$\rho_M^*\in(0,R)$. Initialize $\rho_L=0$ and $\rho_U=R$, using the endpoint
limits established below. Each bisection step sets
$\rho=(\rho_L+\rho_U)/2$ and updates
\begin{equation}
\label{eq:power-cone-bisection-update}
    \begin{cases}
        \rho_L\leftarrow\rho, & g_M(\rho)>0,\\
        \rho_U\leftarrow\rho, & g_M(\rho)<0.
    \end{cases}
\end{equation}
For a prescribed scalar tolerance $\epsilon_{\mathrm{bis}}>0$, the iterations
stop when $\rho_U-\rho_L\le
\epsilon_{\mathrm{bis}}\max\{1,R\}$, and the midpoint is used in
\eqref{eq:power-metric-projection-recovery}.
The Euclidean procedure is obtained by setting $M=I$ and replacing
$g_M$ by $g_I$.
\end{proposition}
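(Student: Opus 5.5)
We reduce the $M$-norm projection to a one-dimensional problem and show that the resulting scalar function $g_M$ has a single sign change on $(0,R)$. First we show that the projection $u=\Pi_{\mathcal K_{\mathrm{pow}}^\alpha}^{M}(r)$ is a nonzero boundary point with $u_x,u_y>0$ and $\operatorname{sign}(u_z)=\operatorname{sign}(r_z)$. By the membership tests, $u\neq r$ and $u\neq0$. Because the metric is diagonal and the cone is symmetric under $z\mapsto-z$, we may reflect to $r_z>0$. Comparing with the candidate obtained by shrinking $|u_z|$ shows that $0\le u_z\le R$.

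Next we write the KKT conditions for $\min\frac12\|u-r\|_M^2$ on the smooth part of the boundary, using the constraint $\alpha\log x+(1-\alpha)\log y\ge\log z$ with multiplier $\mu>0$:
\[
m_x(x-r_x)=\mu\alpha/x,\qquad m_y(y-r_y)=\mu(1-\alpha)/y,\qquad m_z(z-R)=-\mu/z .
\]
The third equation gives $\mu=m_z z(R-z)$, which forces $z=\rho\in(0,R)$. Substituting into the first two equations yields quadratics in $x$ and $y$ whose positive roots are exactly $x_M(\rho)$ and $y_M(\rho)$. Boundary activity then reads $g_M(\rho)=0$. Conversely, any zero of $g_M$ in $(0,R)$ gives a KKT point of this convex problem. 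That point is therefore the unique projection, which also justifies the recovery formula.

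For uniqueness and the bisection we do not aim for monotonicity of $g_M$ directly. Instead we use the following argument: every zero produces a KKT point, and the projection is unique, so there is at most one zero. It remains to prove existence via the endpoint signs.

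\textbf{Left endpoint.} As $\rho\to0^+$, $x_M(\rho)\to\max\{r_x,0\}$ and $y_M(\rho)\to\max\{r_y,0\}$. If both $r_x>0$ and $r_y>0$, then $g_M\to+\infty$ because of the $-\log\rho$ term. If, say, $r_x\le0$, then $x_M\sim c\rho$, with $c=\alpha(m_z/m_x)R/|r_x|$ when $r_x<0$, or $x_M\sim\sqrt{c\rho}$ when $r_x=0$. The limit of $g_M$ is then determined by the exponents and constants. We expect it to equal $+\infty$, or a finite positive number, exactly when $-Mr\notin(\mathcal K_{\mathrm{pow}}^\alpha)^*$, by comparing it with the dual-cone inequality $(m_x|r_x|/\alpha)^\alpha(m_y|r_y|/(1-\alpha))^{1-\alpha}<m_zR$.

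\textbf{Right endpoint.} As $\rho\to R^-$, $g_M(R^-)=\alpha\log\max\{r_x,0\}+(1-\alpha)\log\max\{r_y,0\}-\log R$, interpreted as $-\infty$ if either term is zero. This is negative precisely because $r\notin\mathcal K_{\mathrm{pow}}^\alpha$.

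Continuity of $g_M$ on $(0,R)$ then gives a zero. Since the zero is unique and the signs are positive near $0$ and negative near $R$, $g_M>0$ to the left of $\rho_M^*$ and $g_M<0$ to its right. This makes the bisection update \eqref{eq:power-cone-bisection-update} preserve the bracket, and the stopping rule follows from halving the interval. Setting $M=I$ gives the Euclidean case.

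The main obstacle is the left-endpoint analysis when $r_x\le0$ or $r_y\le0$. The mixed cases, with a linear rate $x_M\sim c\rho$ against a square-root rate, must be shown to reproduce the dual-cone test exactly. We would handle this case by case: for $r_x,r_y<0$ the limit equals $\log\big[(\alpha m_zR/(m_x|r_x|))^\alpha((1-\alpha)m_zR/(m_y|r_y|))^{1-\alpha}/R\big]$, which is positive if and only if $-Mr\notin(\mathcal K_{\mathrm{pow}}^\alpha)^*$. In the remaining cases the limit is $+\infty$.
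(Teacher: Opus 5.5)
Your route is the paper's: the KKT reduction to $x_M(\rho),y_M(\rho)$ (your multiplier is $\mu=\lambda\rho$ in the paper's notation), the upper limit $\log\bigl((r_x)_+^\alpha(r_y)_+^{1-\alpha}/R\bigr)<0$ from $r\notin\mathcal K_{\mathrm{pow}}^\alpha$, the lower limit through the orders $1,\sqrt\rho,\rho$ of $x_M$ and $y_M$, and uniqueness of the zero from uniqueness of the projection. Your extra observation is worth keeping. A unique zero together with the endpoint signs forces $g_M>0$ on $(0,\rho_M^*)$ and $g_M<0$ on $(\rho_M^*,R)$, and that is what makes the bisection update preserve the bracket; the paper leaves this implicit. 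Your first paragraph is unnecessary, since existence comes from the intermediate value theorem. That is fortunate, because you never actually prove $u_x,u_y>0$.

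There is, however, an algebra slip in exactly the step you flag as the main obstacle. For $r_x,r_y<0$ take your own constants $c_x=\alpha(m_z/m_x)R/|r_x|$ and $c_y=(1-\alpha)(m_z/m_y)R/|r_y|$. The $\log\rho$ terms cancel exactly, because $\alpha\log\rho+(1-\alpha)\log\rho-\log\rho=0$. Hence
\[
\lim_{\rho\downarrow0}g_M(\rho)=\alpha\log c_x+(1-\alpha)\log c_y=\log\frac{m_zR}{\bigl(m_x|r_x|/\alpha\bigr)^\alpha\bigl(m_y|r_y|/(1-\alpha)\bigr)^{1-\alpha}},
\]
with no further division by $R$. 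With the extra $1/R$ your expression no longer depends on $R$. Its positivity is then not equivalent to $-Mr\notin(\mathcal K_{\mathrm{pow}}^\alpha)^*$, which is a comparison against $m_zR$, as you yourself wrote earlier.

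The mixed cases are not an obstacle at all. If $r_x\ge0$ or $r_y\ge0$, the power of $\rho$ in $x_M^\alpha y_M^{1-\alpha}$ is strictly less than one, so $g_M\to+\infty$. In those cases $-Mr\notin(\mathcal K_{\mathrm{pow}}^\alpha)^*$ holds automatically: either a sign constraint fails, or the dual product is $0<m_zR$. So the dual test only matters when both coordinates are negative.

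Finally, for the step that says every zero gives the projection, state the KKT system with the concave constraint $x^\alpha y^{1-\alpha}\ge\rho$, as the paper does, or argue through the normal cone at a smooth boundary point. The function $\alpha\log x+(1-\alpha)\log y-\log z$ is not jointly concave, so KKT sufficiency in your log form is not automatic.
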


\begin{proof}
Symmetry in the third coordinate allows a nonzero boundary candidate to be
written as $u_z=\operatorname{sign}(r_z)\rho$, where
$x,y,\rho>0$ and $x^\alpha y^{1-\alpha}=\rho$. Its KKT conditions with
multiplier $\lambda>0$ are
\begin{equation}
\label{eq:power-cone-projection-kkt}
\begin{aligned}
    m_x(x-r_x)&=\lambda\alpha\frac{\rho}{x},\\
    m_y(y-r_y)&=\lambda(1-\alpha)\frac{\rho}{y},\\
    m_z(\rho-R)+\lambda&=0,\\
    x^\alpha y^{1-\alpha}&=\rho.
\end{aligned}
\end{equation}
Eliminating $\lambda=m_z(R-\rho)$ and taking the positive roots of the first
two quadratic equations yields $x_M(\rho)$ and $y_M(\rho)$ in
\eqref{eq:power-projection-root}. Thus every zero of $g_M$ in $(0,R)$
satisfies the KKT conditions and recovers the projection.

At the upper endpoint, with $\log 0:=-\infty$,
\begin{equation}
\label{eq:power-cone-upper-endpoint-limit}
    \lim_{\rho\uparrow R}g_M(\rho)
    =\log\left(\frac{(r_x)_+^\alpha(r_y)_+^{1-\alpha}}{R}\right)<0,
\end{equation}
where the strict inequality follows from
$r\notin\mathcal K_{\mathrm{pow}}^\alpha$. As $\rho\downarrow0$,
$x_M(\rho)$, and analogously $y_M(\rho)$, is of order $1$, $\sqrt\rho$, or
$\rho$ according as $r_x$ is positive, zero, or negative. Hence
$g_M(\rho)\to+\infty$ unless both
$r_x<0$ and $r_y<0$. In that remaining case,
\begin{equation}
\label{eq:power-cone-lower-endpoint-limit}
    \lim_{\rho\downarrow0}g_M(\rho)
    =\log\left(
    \frac{m_zR}
    {\left(\frac{-m_xr_x}{\alpha}\right)^\alpha
     \left(\frac{-m_yr_y}{1-\alpha}\right)^{1-\alpha}}
    \right)>0.
\end{equation}
The final inequality is exactly the failure of the zero-projection test
$-Mr\in(\mathcal K_{\mathrm{pow}}^\alpha)^*$. Thus $[0,R]$ is a
sign-changing bracket in the limiting sense. Finally, the weighted projection
problem has a strictly convex objective, so its KKT point is unique.
Consequently, $g_M$ has a unique zero in $(0,R)$, and bisection converges to
it. Setting $M=I$ proves the Euclidean statement.
\end{proof}

\section{Large-Scale Instance Statistics}
\label{app:instance-statistics}

Table~\ref{tab:lasso-instances} reports the dimensions and densities of
the large-scale Lasso-derived QP instances used in
Section~\ref{sec:realworldqp}; $m$ and $n$ denote the number of samples
and features of the underlying design matrix $A$.

\begin{table}[H]
  \centering
  \papertablesetup
  \caption{Instance statistics of the large-scale Lasso-derived QPs.}
  \label{tab:lasso-instances}
  \medskip
\begin{tabular}{lrrr}
    \toprule
    Problem & $m$ & $n$ & Density \\
    \midrule
    SLS & 1,748,122 & 62,729 & $6.21\times10^{-5}$ \\
    rcv1\_test & 677,399 & 47,236 & $1.55\times10^{-3}$ \\
    avazu-site.tr & 23,567,843 & 1,000,000 & $1.50\times10^{-5}$ \\
    avazu-app & 40,428,967 & 1,000,000 & $1.50\times10^{-5}$ \\
    avazu-site & 25,832,830 & 1,000,000 & $1.50\times10^{-5}$ \\
    kddb2010\_test & 748,401 & 1,163,024 & $7.74\times10^{-6}$ \\
    kdda2010\_test & 510,302 & 20,216,830 & $1.87\times10^{-5}$ \\
    kddb2010\_train & 19,264,097 & 1,163,024 & $7.97\times10^{-6}$ \\
    kdda2010\_train & 8,407,752 & 20,216,830 & $1.80\times10^{-6}$ \\
    \bottomrule
  \end{tabular}
\end{table}

Table~\ref{tab:fisher-instances} reports the dimensions of the quasilinear
Fisher equilibrium instances used in
Section~\ref{sec:fisher-multigpu-exp}. Here $n$ and $m$ are the numbers of
buyers and goods, $e=|\mathcal E|$ is the number of nonzero buyer-good
valuations, $N=e+4n$ is the number of stored primal coordinates, and
$M=m+n$ is the number of affine equations in \eqref{eq:fisher-conic}.

\begin{table}[H]
  \centering
  \papertablesetup
  \caption{Instance statistics of the quasilinear Fisher equilibrium
  benchmark.}
  \label{tab:fisher-instances}
  \medskip
  \begin{tabular}{rrrrrr}
    \toprule
    Buyers $n$ & Goods $m$ & Density $\rho$ & Edges $e$ & Variables $N$ & Equations $M$ \\
    \midrule
    1,000      & 400   & 0.20 & 79,956      & 83,956      & 1,400 \\
    10,000     & 4,000 & 0.02 & 798,310     & 838,310     & 14,000 \\
    100,000    & 4,000 & 0.02 & 7,999,211   & 8,399,211   & 104,000 \\
    1,000,000  & 4,000 & 0.02 & 79,996,397  & 83,996,397  & 1,004,000 \\
    10,000,000 & 4,000 & 0.01 & 400,022,237 & 440,022,237 & 10,004,000 \\
    \bottomrule
  \end{tabular}
\end{table}

\section{Ablation: Inner Accuracy and Preconditioning}
\label{app:inner-accuracy-ablation}

We isolate the effect of the monotone inner tolerance
\eqref{eq:monotone-primal-inner-tol} and of Jacobi inner preconditioning on
the non-diagonal Maros-M{\'e}sz{\'a}ros subset, whose instances invoke the
inner solver, at tolerance $10^{-6}$ with a 1000 second time limit. The PDHCG
baseline sets the inner tolerance to $0.05$ times the current KKT residual.

\begin{table}[H]
    \centering
    \papertablesetup
    \caption{Ablation of adaptive inner tolerance and inner preconditioning
    on non-diagonal Maros-M{\'e}sz{\'a}ros problems.}
    \label{tab:adaptive_inner}
    \medskip
    \begin{tabular}{lrrr}
        \toprule
        Solver & Solved & Time SGM$_{10}$ (s) & Iteration SGM$_{10}$ \\
        \midrule
        PDHCG baseline        & 76          & 11.04         & 18,434 \\
        \name without Jacobi preconditioning
                                        & \underline{78} & \underline{6.15} & \underline{6,495} \\
        \name with Jacobi preconditioning
                                        & \textbf{79} & \textbf{5.14} & \textbf{5,690} \\
        \bottomrule
    \end{tabular}
\end{table}

The monotone adaptive tolerance sharply reduces runtime and iteration count
relative to the PDHCG baseline, and Jacobi inner preconditioning solves one
additional hard instance while lowering the runtime
$\operatorname{SGM}_{10}$ from 6.15 to 5.14 seconds.

\end{document}